\newtheorem{theorem}{Theorem}[section]
\newtheorem{definition}[theorem]{Definition}
\newtheorem{lemma}[theorem]{Lemma}
\newtheorem{corollary}[theorem]{Corollary}
\newtheorem{proposition}[theorem]{Proposition}
\newtheorem{example}[theorem]{Example}
\newtheorem{remark}[theorem]{Remark}
\newtheorem{convention}[theorem]{Convention}
\newcommand{\GK}{\operatorname{GKdim}}
\newcommand{\Aut}{\operatorname{Aut}}
\newcommand{\gr}{\operatorname{gr}}
\newcommand{\id}{\operatorname{id}}
\newcommand{\oo}{{\operatorname{o}}}
\newcommand{\SM}{\operatorname{SM}}
\newcommand{\NNN}{\mathbb{N}}
\newcommand{\RR}{\mathbb{R}}
\newcommand{\ZZ}{\mathbb{Z}}
\newcommand{\fg}{finitely generated}
\newcommand{\fd}{finite dimensional}
\newcommand{\dd}{differential difference}
\newcommand{\GL}{\operatorname{GL}}
\newcommand{\Z}{\operatorname{Z} }%centre
\title[Growth of GWAs]{Growth of generalized Weyl algebras over polynomial algebras and Laurent polynomial algebras\\
\ \\
\textrm{\scriptsize%\footnotesize
Dedicated to Prof. Yuqun Chen on the Occasion of His 65th Birthday}
}
\author[Zhao]{Xiangui Zhao}
\address{School of Mathematics and Statistics,
Huizhou University, Huizhou, Guangdong 516007, China}
\email{zhaoxg@hzu.edu.cn}
\date{\today}   % Activate to display a given date or no date
\begin{document}
\begin{abstract}
We mainly study the growth and Gelfand-Kirillov dimension (GK-dimension) of generalized Weyl algebra (GWA) $A=D(\sigma,a)$ where $D$ is a polynomial algebra or a Laurent polynomial algebra.
  Several necessary and sufficient conditions for $\operatorname{GKdim}(A)=\operatorname{GKdim}(D)+1$ are given.
  In particular, we prove a dichotomy of the GK-dimension of GWAs over the polynomial algebra in two indeterminates,
  namely, $\operatorname{GKdim}(A)$ is either $3$ or $\infty$ in this case.
  Our results generalize several ones in the literature and
  can be applied to determine the growth, GK-dimension, simplicity,
and cancellation properties of some GWAs.
\end{abstract}
%%%%%%%%%%%%
\makeatletter
\@namedef{subjclassname@2020}{%
  \textup{2020} Mathematics Subject Classification}
\makeatother
\subjclass[2020]{
%15A09, %Matrix inversion, generalized inverses
%13P10,% Groebner bases; other bases for ideals and modules (e.g., Janet and border bases)
%16Z10 Groebner-Shirshov bases
16P90, %Growth rate, Gelfand-Kirillov dimension
16S36, %Ordinary and skew polynomial rings and semigroup rings [See also 20M25]
16P40, %Noetherian rings and modules (associative rings and algebras)
16S32. % Rings of differential operators [See also 13N10, 32C38]
}

\keywords{Gelfand-Kirillov dimension,
generalized Weyl algebra,
polynomial automorphism}
\maketitle
%%%%%%%%%%%
%\setcounter{section}{-1}
\section{Introduction and main results}
Throughout let $k$ be a field.
All algebras in consideration are associative unital $k$-algebras and all automorphisms of algebras are $k$-automorphisms.
Let $D$ be an algebra,
$a$ be a central element of $D$ and
$\sigma$ be an automorphism of $D$.
The \emph{generalized Weyl algebra} (GWA, for short)
 $A=D(\sigma,a)$ over $D$ is defined as the algebra generated by $D$ and two indeterminates $x$ and $y$ subject to the relations:
\[
xd=\sigma(d)x,\ yd=\sigma^{-1}(d)y, \
yx=a,\ xy=\sigma(a), \
\text{ for all }d\in D.
\]
GWAs are natural generalizations of Weyl algebras and
form an important class of noncommutative algebras.
Many algebras of common interest fall into this class,
for instance, Weyl algebras,
certain quantum enveloping algebras, some iterated Ore extensions,
noetherian (generalized) down-up algebras, Heisenberg algebras,
 etc. (see \cite{Bavula1992Generalized,bavula2017quantum,Zhao2018Gelfand} for more examples).
GWAs were introduced by Bavula \cite{Bavula1992Generalized} in early 1990s.
Since then
their structures and representations have been studied intensively, for example,
their dimension and growth \cite{bavula1996global,Bavula1998Krull,bavula2001krull,Zhao2018Gelfand};
their derivations, isomorphisms and automorphisms
\cite{bavula2001isomorphism,Suarez-Alvarez_2015,Almulhem2018,Gaddis2019,Gutierrez2020};
and their homological and geometric properties \cite{Brzezinski2016,Won2018,Liu2018,Ferraro2020,liu2021batalin,klyuev2021twisted}.

The notion of growth,
introduced by Gelfand and Kirillov \cite{gelfand1966corps}
for algebras
and by Milnor \cite{milner1968curvature} for groups,
is a fundamental object of study in theory of algebras (not necessarily associative) and groups.
The Gelfand-Kirillov dimension (GK-dimension, for short) is a main tool to quantify growth of algebras and groups.
The GK-dimension of an algebra $A$ is defined as
\[
\GK(A):=\sup_V{\limsup_{n\to\infty}}\log_n \dim_k\left(\sum_{i=0}^nV^i\right)
\]
where the supremum is taken over all finite dimensional subspaces $V$ of $A$.
GK-dimension basically measures the growth of an algebra (rather than the growth of a ring, see \cite{Wu1991}).
For a finitely generated commutative algebra,
its Krull dimension and GK-dimension coincide.
Thus GK-dimension can be viewed as a noncommutative analogue of Krull dimension.
It turns out that the GK-dimension is a very useful and powerful tool for investigating noncommutative algebras,
for example, in recent studies of classification problems
(see e.g. \cite{Wang2015,GOODEARL2017,Andruskiewitsch2020})
and cancellation problems (see e.g. \cite{Bell2017Zariski,Lezama2018,tang2020cancellation,chan2022reflexive}).
For basic properties and applications of GK-dimension of algebras and groups we refer the reader to \cite{Krause-Lenagan_2000}.
For GK-dimension of other algebraic structures, see e.g.
\cite{Khoroshkin2015,Bao2020,Bai2021,2020Growth}.

There have been in the literature
a number of results concerning GK-dimension of skew polynomial extensions related to derivations and/or automorphisms,
for example, the GK-dimensions of Ore extensions \cite{lorenz1982gelfand, huh1996gelfand,zhang1997note},
 of PBW-extensions \cite{matczuk1988gelfand},
 and of \dd\ algebras/modules \cite{ZhangZhao2013algebras}.
Recently, the author \cite{Zhao2018Gelfand} investigated general properties of GK-dimensions of GWAs.

In this paper, we mainly study the growth and GK-dimension of GWAs
over polynomial algebras $P_n:=k[z_1,z_2,\dots,z_n]$
(especially $P_2$)
and that over Laurent polynomial algebras $L_n:=k[z_1^{\pm1},z_2^{\pm1},\dots,z_n^{\pm1}]$.
We have two motivations for this paper.
The first motivation is the fact that
lots of GWAs studied in literature are ones over $P_n$ or $L_n$
and thus this subclass of GWAs is of fundamental importance in practice.
For instance, (quantum) Weyl algebras, quantum planes, and primitive quotients of the quantum enveloping algebra $\mathcal{U}_q(\mathfrak{sl}_2)$ of special linear Lie algebra
$\mathfrak{sl}_2$ are GWAs over $P_1$ or $L_1$;
noetherian generalized down-up algebras, quantum Heisenberg algebra,
and the universal enveloping algebra $U(\mathfrak{sl}_2)$
are GWAs over $P_2$;
the group algebra of the {(discrete) Heisenberg group}
(Example \ref{exam_Heisenberg}) and
the homogenized enveloping algebra of some Lie algebra
(\cite[Example 2.4 (ii)]{jordan2009reversible})
are GWAs over $L_2$.
Our second motivation is that,
since
the automorphism groups $\Aut(P_2)$ and $\Aut(L_n)$ are well-understood
(see Jung and van der Kulk \cite{Jung1942Uber,vanderKulk1953} for $\Aut(P_2)$ and Theorem \ref{thm_auto(L)} for $\Aut(L_n)$),
sharper theorems can be expected when one investigates the GK-dimension of GWAs over $P_2$ and $L_n$.

Let $A=D(\sigma,a)$ be a GWA.
It was proved in \cite{Zhao2018Gelfand} that the difference
$\GK(A)-\GK(D)$ can be any positive integer or infinity.
If $\sigma$ is \emph{locally algebraic} (Definition \ref{def-la}) meaning that
every \fd\ subspace of $D$ is contained in a \fd\ $\sigma$-stable subspace of $D$,
then $\GK(A)-\GK(D)=1$.
Although $\GK(A)-\GK(D)=1$ does not imply in general that $\sigma$ is locally algebraic (see Example \ref{exam-GK+1-NotLA}),
we prove that, for GWAs over $P_n$ and $L_n$, this implication does hold.
More generally,
we have the following theorem.

\begin{theorem}
  \label{thm-main-}
  Let $A=D(\sigma,a)$ be a GWA.
  Let $P_n=k[z_1,z_2,\dots,z_n]$ be the polynomial algebra in $n$ indeterminates and $Q_n$ be the fraction field of $P_n$.

  If $P_n\subseteq D\subseteq Q_n$,
  then the following statements are equivalent:
  \begin{enumerate}[(a)]
    \item\label{itemD+1}
    $\GK(A)=\GK(D)+1$.
    \item\label{itemD+LA}
    $\sigma$ is locally algebraic.
  \end{enumerate}

  If $k$ is algebraically closed, $D$ is a \fg\ field and $a\neq0$,
 then (\ref{itemD+1}) and (\ref{itemD+LA}) are equivalent to each of the following statements:
 \begin{enumerate}[(a)]
 \setcounter{enumi}{2}
    \item\label{itemD-FinOrd}
     $\sigma$ has a finite order.
     \item\label{itemD-NotSimple}
     $A$ is not simple.
 \end{enumerate}
\end{theorem}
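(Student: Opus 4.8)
The implication (\ref{itemD+LA})$\Rightarrow$(\ref{itemD+1}) is the general fact recalled above, so only (\ref{itemD+1})$\Rightarrow$(\ref{itemD+LA}) needs work, and I would prove its contrapositive. Since $P_n\subseteq D\subseteq Q_n$ forces $\GK(D)=n$, and since the skew polynomial subalgebra $D[x;\sigma]\subseteq A$ already gives $\GK(A)\ge\GK(D[x;\sigma])\ge n+1$ (use the subspace $k1+kz_1+\dots+kz_n+kx$, whose partial powers up to degree $N$ have dimension of order $N^{n+1}$), the claim to establish is: \emph{if $\sigma$ is not locally algebraic, then $\GK(A)\ge n+2$}, which already refutes (\ref{itemD+1}). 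All of this happens inside $D[x;\sigma]$, so the central element $a$ is irrelevant to the first part.

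For the construction, first pick $f\in D$ with $\dim_k\operatorname{span}_k\{\sigma^j(f):j\in\ZZ\}=\infty$ (possible since $\sigma$ is not locally algebraic); such $f$ is transcendental over $k$. Extend $\sigma$ to $\Aut(Q_n)$ and choose a projective model $X$ of $Q_n/k$. The key observation is that if a prime divisor $\Gamma$ is fixed by $\sigma^m$ then $\sigma^m$ induces an automorphism of the local ring $\mathcal O_{X,\Gamma}$, so $v_\Gamma(\sigma^{jm}(f))=v_\Gamma(f)$ for all $j$; hence if every prime divisor meeting $\operatorname{supp}(\operatorname{div}f)$ had bounded $\sigma$-orbit, the whole family $\{\sigma^j(f)\}$ would lie in $L(E)$ for one fixed divisor $E$ and be finite dimensional, a contradiction. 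So some prime divisor $\Gamma$ with $v_\Gamma(f)=-c<0$ has an infinite (free) $\sigma$-orbit. Because $\operatorname{supp}(\operatorname{div}f)$ is finite, for each $i$ one can thin $\{1,\dots,i\}$ to a subset $S_i$ with $|S_i|\gg i$ so that for distinct $d,d'\in S_i$ the divisor $\sigma^d(\Gamma)$ does not meet $\operatorname{supp}(\operatorname{div}\,\sigma^{d'}(f))$. Now set $V:=k1+kf+kz_1+\dots+kz_n+kx$ and use the identity $p(z_1,\dots,z_n)\cdot x^{d}f^{e}x^{\,i-d}=\big(p\cdot\sigma^d(f)^{e}\big)x^{i}$ in $D[x;\sigma]$, which is realised by a word in $V$ of length $\deg p+e+i$, \emph{independent of} $d$. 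Thus the degree-$i$ component of $\sum_{j\le N}V^j$ contains $\{\,p\cdot\sigma^d(f)^{e}\,x^i:\ d\in S_i,\ e\ge1,\ p\ \text{a monomial},\ \deg p+e+i\le N\,\}$. For fixed $d$ these are $k$-independent functions whose only singularity beyond polynomial growth sits along $\sigma^d(\Gamma)$ with pole order $\le N-i$, hence span a space of dimension $\asymp(N-i)^n$; for distinct $d\in S_i$, localising at $\sigma^d(\Gamma)$ separates the blocks. So the degree-$i$ component has dimension $\gtrsim i\,(N-i)^n$, and summing over $i\le N/2$ yields $\dim\big(\sum_{j\le N}V^j\big)\gtrsim N^{\,n+2}$, whence $\GK(A)\ge n+2$. (Iterating with products $\sigma^{d_1}(f)^{e_1}\cdots\sigma^{d_k}(f)^{e_k}$ over growing families of divisors in fact gives $\GK(A)=\infty$, but $n+2$ suffices.)

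For the second part: (\ref{itemD-FinOrd})$\Rightarrow$(\ref{itemD+LA}) is immediate, since $\sigma^m=\id$ puts any finite-dimensional $U$ inside the $\sigma$-stable $U+\sigma(U)+\dots+\sigma^{m-1}(U)$. For (\ref{itemD+LA})$\Rightarrow$(\ref{itemD-FinOrd}): if $\sigma$ had infinite order on the finitely generated field $D$ over $\bar k$, the divisor argument above (which never used rationality of $D$) produces a prime divisor $\Gamma$ with infinite $\sigma$-orbit; choosing $f\in D$ with $v_\Gamma(f)\neq0$ and thinning the orbit, the elements $\sigma^j(f)$ are distinguished by their poles along the distinct divisors $\sigma^j(\Gamma)$, so $\operatorname{span}\{\sigma^j(f)\}$ is infinite dimensional and $\sigma$ is not locally algebraic — a contradiction. (The same remark shows (\ref{itemD+1})$\Leftrightarrow$(\ref{itemD+LA}) persists for an arbitrary finitely generated field $D$: in the construction above replace the polynomials $p$ by a finite-dimensional subspace $W\supseteq\{1,z_1,\dots,z_n\}$ realising $\GK(D)=n$.) Finally, as $D$ is a field and $a\neq0$, $a$ is a unit, so $x$ is invertible in $A$ with $x^{-1}=a^{-1}y$ and $A=D[x^{\pm1};\sigma]$ is a skew Laurent ring over a field; if $\sigma$ has finite order $m$ then $x^m$ is central and $A/(x^m-1)$ is a nonzero skew group ring $D\rtimes(\ZZ/m\ZZ)$, so $A$ is not simple, whereas if $\sigma$ has infinite order then the standard length-reduction argument (for $0\neq f=\sum d_ix^i$ of minimal length in an ideal, $fc-cf=\sum_i d_i(\sigma^i(c)-c)x^i$ is shorter, forcing $d_i(\sigma^i(c)-c)=0$ for all $c$, hence $d_i=0$ for $i\neq0$ and $f\in k^\times$) shows $A$ is simple; this gives (\ref{itemD-FinOrd})$\Leftrightarrow$(\ref{itemD-NotSimple}).

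\textbf{Expected main obstacle.} The delicate point is the linear-independence bookkeeping in the counting step once $n\ge2$: unlike the one-dimensional case, a monomial $p$ may vanish along a chosen divisor $\sigma^d(\Gamma)$, so the single divisorial valuation $v_{\sigma^d(\Gamma)}$ does not by itself separate the blocks. One has to choose the thinned sets $S_i$ (and, where needed, replace $v_{\sigma^d(\Gamma)}$ by a composite monomial valuation that simultaneously resolves the polynomial directions) carefully enough that the polynomial directions and the orbit directions are both ``visible'', while keeping the block count of order $i\,(N-i)^n$. Making that estimate survive is the technical heart of the proof; the divisor-theoretic input (Step~1) and the skew-Laurent simplicity criterion, by contrast, are routine.
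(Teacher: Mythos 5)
Your reduction of the first equivalence to ``$\sigma$ not locally algebraic $\Rightarrow\GK(D[x;\sigma])\geq n+2$'' is the right target (and (\ref{itemD+LA})$\Rightarrow$(\ref{itemD+1}) is indeed just Lemma \ref{lem_GK-CA}), but your proof of that implication is not actually carried out: the block-separation estimate ``degree-$i$ component has dimension $\gtrsim i\,(N-i)^n$'' is the entire content of the step, and the single valuation $v_{\sigma^d(\Gamma)}$ does not separate the blocks as written. For instance $f$ may lie in $P_n$ (nothing prevents $D=P_n$), so its polar divisors $\Gamma$ on a projective model sit at infinity, where every nonconstant monomial $p$ also has a pole; hence elements $p\,\sigma^{d'}(f)^{e}$ of the other blocks need not have nonnegative valuation along $\sigma^d(\Gamma)$, and cancellations among the $p\,\sigma^{d}(f)^{e}$ inside one block can destroy the pole there as well. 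You acknowledge exactly this and defer the repair (``composite monomial valuations''), but that deferred estimate is the theorem; moreover $\sigma$ acts only birationally on any fixed model, so even the preliminary orbit bookkeeping must be done at the level of divisorial valuations. The paper sidesteps all of this with Zhang's sensitive multiplicity condition: Lemma \ref{lemma-PnSMC} shows, over an arbitrary field, that $P_n$ satisfies $\SM(V_n,c_n,n)$; Lemma \ref{lemma-SMCQuotient} and Corollary \ref{coro-SMCPoly} transfer SMC to every $D$ with $P_n\subseteq D\subseteq Q_n$; and Lemma \ref{lemma-SMC-GK+2} (via \cite[Prop.\ 3.3(1)]{zhang1997note}) then yields $\GK(D[x;\sigma])\geq \GK(D)+2$ when $\sigma$ is not locally algebraic. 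The SMC inequality $\dim(W^m)\geq c\dim(W)m^{n}$ for every $W\supseteq V_0a$ is precisely the uniform lower bound that replaces the independence bookkeeping you could not close.

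In the second block, (\ref{itemD-FinOrd})$\Rightarrow$(\ref{itemD+LA}) is fine, and your direct treatment of (\ref{itemD-FinOrd})$\Leftrightarrow$(\ref{itemD-NotSimple}) via $A=D[x^{\pm1};\sigma]$ (legitimate since $a\neq0$ is a unit in the field $D$) is a standard and acceptable alternative to the paper's route through Bavula's criterion (Lemma \ref{lemma-Simplicity} and Corollary \ref{coro-Simple}, which instead tie (\ref{itemD-NotSimple}) to (\ref{itemD+1})). The genuine gap is (\ref{itemD+LA})$\Rightarrow$(\ref{itemD-FinOrd}): you claim ``the divisor argument above'' produces, from mere infinite order of $\sigma$, a prime divisor with infinite orbit, but the argument above runs in the opposite direction --- it extracts such a divisor from a function $f$ whose orbit spans an infinite-dimensional space, i.e.\ from $\sigma$ already failing to be locally algebraic, which is what you are trying to contradict. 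That an infinite-order automorphism of a finitely generated field over an algebraically closed $k$ must move some divisorial valuation in an infinite orbit (equivalently, that locally algebraic implies finite order) is the nontrivial content here, and it is exactly where algebraic closedness and finite generation are used; the paper does not reprove it but invokes the proof of Zhang's Lemma \ref{lemma-zhang97} (see Proposition \ref{prop-Zhang97}). You would need either to cite that result or to supply an actual argument for this step.
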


Theorem \ref{thm-main-} is an analogue of \cite[Theorem 1.1]{zhang1997note} but note that we do not assume that
$k$ is algebraically closed to get $(a)\Leftrightarrow (b)$.
See Proposition \ref{prop-Zhang97} for more results for the algebraically closed case.

When considering GWAs over $P_2$,
we have more elaborate results.
Our main results on  the GK-dimension of GWAs over $P_2$ can be summarized as
\begin{theorem}
    \label{thm-mainP}
  Let $A=k[z_1,z_2](\sigma,a)$ be a GWA.
     Then the following statements are equivalent:
  \begin{enumerate}[(a)]
    \item\label{itemP+1}
    $\GK(A)=3$.
    \item\label{itemP-GKfinite}
    $\GK(A)<\infty$.
    \item\label{itemP-PolyGrowth}
    $A$ has polynomial growth.
    \item\label{itemP-NotExGrowth}
    $A$ does not have exponential growth.
    \item\label{itemP-LA}
    $\sigma$ is locally algebraic.
    \item\label{itemP-ConjTri}
    $\sigma$ is conjugate to a triangular automorphism.
    \item\label{itemP-PowerConjTri}
    $\sigma^m$ is conjugate to a triangular automorphism for some $m\geq 1$.
  \end{enumerate}
\end{theorem}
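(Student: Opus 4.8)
The plan is to anchor everything on the equivalence $(\ref{itemP+1})\Leftrightarrow(\ref{itemP-LA})$, which is Theorem~\ref{thm-main-} applied with $D=P_2$ (so $\GK(D)=2$ and $\GK(D)+1=3$), and then to close a short cycle through the remaining growth conditions. The implications $(\ref{itemP+1})\Rightarrow(\ref{itemP-GKfinite})$, $(\ref{itemP-GKfinite})\Rightarrow(\ref{itemP-NotExGrowth})$ (if a finitely generated algebra has finite GK-dimension it cannot have exponential growth), $(\ref{itemP-PolyGrowth})\Rightarrow(\ref{itemP-GKfinite})$ and $(\ref{itemP-PolyGrowth})\Rightarrow(\ref{itemP-NotExGrowth})$ are immediate from the definitions; so the substantive steps are $(\ref{itemP-LA})\Rightarrow(\ref{itemP-PolyGrowth})$, $(\ref{itemP-NotExGrowth})\Rightarrow(\ref{itemP-LA})$, and the automorphism-theoretic equivalences $(\ref{itemP-LA})\Leftrightarrow(\ref{itemP-ConjTri})\Leftrightarrow(\ref{itemP-PowerConjTri})$.

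For $(\ref{itemP-LA})\Rightarrow(\ref{itemP-PolyGrowth})$ I would argue directly. Choose a finite-dimensional $\sigma$-stable subspace $W\subseteq P_2$ with $\{1,z_1,z_2,a\}\subseteq W$; since $W$ is finite-dimensional and $\sigma$ is injective, $\sigma(W)=W=\sigma^{-1}(W)$. Put $V=W+kx+ky$, a finite-dimensional generating subspace of $A$. Using $xw=\sigma(w)x$ and $yw=\sigma^{-1}(w)y$ (the twisted elements again lying in $W$), together with $yx=a\in W$ and $xy=\sigma(a)\in W$, one sees that every length-$\le n$ product of elements of $W\cup\{x,y\}$ can be put into the normal form $\sum w'v_m$ with $w'\in W^{\le n}$ and $v_m\in\{x^m:m\ge0\}\cup\{y^{-m}:m<0\}$, $|m|\le n$; hence $V^{\le n}\subseteq\sum_{|m|\le n}W^{\le n}v_m$. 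As $A$ is a free left $P_2$-module on $\{v_m\}_{m\in\ZZ}$ and $W^{\le n}$ consists of polynomials of degree $\le n\deg W$, we get $\dim_k V^{\le n}\le(2n+1)\binom{n\deg W+2}{2}=O(n^3)$, so $A$ has polynomial growth.

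The heart of the proof is $(\ref{itemP-NotExGrowth})\Rightarrow(\ref{itemP-LA})$, which I would prove in the contrapositive form: if $\sigma$ is not locally algebraic, then $A$ has exponential growth. The first point is that, for an automorphism of $P_2$, local algebraicity is equivalent to boundedness of the degree sequence $\big(\deg\sigma^n(z_1),\deg\sigma^n(z_2)\big)_{n\in\ZZ}$: if the degrees are bounded by $d$, then $\sigma^n(P_2^{\le e})\subseteq P_2^{\le de}$ for all $n\in\ZZ$, so $\sum_{n\in\ZZ}\sigma^n(P_2^{\le e})$ is a finite-dimensional $\sigma$-stable subspace containing $P_2^{\le e}$; the converse is clear. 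Next I invoke the structure of $\Aut(P_2)$ (Jung--van der Kulk): a polynomial automorphism of $P_2$ that is not conjugate to a triangular automorphism has degree sequence growing at least geometrically. Thus, after replacing $z_1$ by $z_2$ if necessary and passing to a sparse subsequence of indices $0\le i_1<\dots<i_m\le r$ with $m\ge\varepsilon r$, one can arrange that $\big(\deg\sigma^{i_t}(z_1)\big)_{t}$ is super-increasing: $\deg\sigma^{i_{t+1}}(z_1)>\sum_{s\le t}\deg\sigma^{i_s}(z_1)$. The second, algebraic, step is to exhibit many linearly independent elements of small word length: for $S\subseteq\{i_1,\dots,i_m\}$ set
\[
u_S:=\Big(\prod_{i\in S}\sigma^{i}(z_1)\Big)x^{r}\in A .
\]
Using $x^jz_1=\sigma^j(z_1)x^j$ one checks that $u_S$ is represented by the word $z_1^{b_0}xz_1^{b_1}x\cdots xz_1^{b_r}$ (with $b_i=1$ exactly for $i\in S$), of length $r+|S|\le 2r+1$, so $u_S\in V^{\le 2r+1}$ for $V=k\{1,z_1,z_2,x,y\}$. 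By the super-increasing property the numbers $\sum_{i\in S}\deg\sigma^{i}(z_1)$ are pairwise distinct, so the polynomials $\prod_{i\in S}\sigma^{i}(z_1)$ have pairwise distinct degrees and hence are $k$-linearly independent; since $A$ is free as a left $P_2$-module with $x^r$ a basis element, the $2^m$ elements $u_S$ are linearly independent. Therefore $\dim_k V^{\le 2r+1}\ge 2^m\ge 2^{\varepsilon r}$ for all $r$, i.e. $A$ has exponential growth. I expect this step --- the degree-growth dichotomy for plane polynomial automorphisms, and turning it into growth of $A$ --- to be the main obstacle.

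Finally, for $(\ref{itemP-LA})\Leftrightarrow(\ref{itemP-ConjTri})\Leftrightarrow(\ref{itemP-PowerConjTri})$: triangular automorphisms have bounded degree sequences (a short induction on the power), and both local algebraicity and ``conjugate to a triangular automorphism'' are preserved under conjugation, so $(\ref{itemP-ConjTri})\Rightarrow(\ref{itemP-LA})$; if $\sigma^m$ is conjugate to a triangular automorphism then $\sigma^m$, hence $\sigma$, is locally algebraic (enlarge a finite-dimensional $\sigma^m$-stable subspace by adding its $\sigma,\dots,\sigma^{m-1}$-translates), so $(\ref{itemP-PowerConjTri})\Rightarrow(\ref{itemP-LA})$; and $(\ref{itemP-ConjTri})\Rightarrow(\ref{itemP-PowerConjTri})$ trivially. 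The remaining implication $(\ref{itemP-LA})\Rightarrow(\ref{itemP-ConjTri})$ is again read off from the Jung--van der Kulk classification: a plane polynomial automorphism with bounded degree sequence is conjugate to a triangular one. Together with $(\ref{itemP+1})\Leftrightarrow(\ref{itemP-LA})$ and the trivial implications listed above, this closes all seven equivalences.
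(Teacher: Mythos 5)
Your proposal is correct in substance and its skeleton is essentially the paper's: you anchor (\ref{itemP+1})$\Leftrightarrow$(\ref{itemP-LA}) on the sensitive multiplicity result (Proposition \ref{prop-Zhang97PL}), you prove (\ref{itemP-LA})$\Rightarrow$(\ref{itemP-PolyGrowth}) by pushing $x,y$ past a $\sigma$-stable frame and counting the graded pieces, and you get exponential growth from unbounded degree growth of the iterates $\sigma^n$ by exhibiting exponentially many independent words $z_1^{b_0}xz_1^{b_1}x\cdots xz_1^{b_r}$ --- your super-increasing subset-sum argument is the same mechanism as Lemma \ref{lem_GK-infty}, phrased positively instead of by contradiction. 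The genuine differences are these. (1) Where you black-box the dichotomy ``not conjugate to triangular $\Rightarrow$ geometric degree growth'' and ``bounded degrees $\Rightarrow$ conjugate to triangular'' as a consequence of Jung--van der Kulk, the paper derives it concretely from \cite[Lemma 3]{Lane1975Fixed}: such a $\sigma$ is conjugate to $\tau_1\pi\tau_2\cdots\tau_s\pi$ with $\deg\beta_i>1$, and an induction gives $\deg\sigma'^m(z_2)=(d_1\cdots d_s)^m\geq 2^m$. The statement you invoke is of Friedland--Milnor type and is strictly more than Jung--van der Kulk itself, so it should be cited precisely; note also that the ``triangular'' (as opposed to ``affine or triangular'') conclusion is delicate over a non-algebraically closed field --- a subtlety the paper inherits through Lane's lemma as well, so it is not specific to your route. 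Relatedly, your sparsification step needs two-sided comparability $\deg\sigma^n\asymp\lambda^n$ (which the conjugated normal form does provide, degrees being distorted only by bounded factors under conjugation), not merely the lower bound ``at least geometric''. (2) You handle (\ref{itemP-PowerConjTri}) by observing that $\sigma^m$ locally algebraic forces $\sigma$ locally algebraic (add the $\sigma,\dots,\sigma^{m-1}$-translates of a stable subspace); this bypasses the paper's power-GWA Lemma \ref{lemma_Power} and Corollary \ref{coro_PowerConjTri} and is a genuine small simplification. (3) One small repair in (\ref{itemP-LA})$\Rightarrow$(\ref{itemP-PolyGrowth}): your count gives only $\dim V^{\leq n}=O(n^3)$, which by Definition \ref{def-growth} is not yet ``polynomial growth''; you need the matching lower bound, e.g. $V^{\leq 2n}\supseteq\bigoplus_{0\leq j\leq n}W^nx^j$ (a direct sum by the $\ZZ$-grading), so that $\dim V^{\leq 2n}\geq (n+1)\dim P_2^{\leq n}\sim n^3$, exactly as in Eq. (\ref{eqn-Lower}) of the paper.
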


The growth of algebras (Definition \ref{def-growth}) can be prescribed within a rather wide range of functions.
Recently,
Bell and Zelmanov \cite{Bell2020} give a complete characterization of the functions that can occur as the growth functions of algebras.
Theorem \ref{thm-mainP} indicates that
GWAs over $P_2$ have \emph{alternative growth} (Definition \ref{def-growth}).
Note that not every algebra or group has alternative growth,
for example,
if a \fg\ Lie
algebra $\mathfrak{g}$ has polynomial growth then its universal enveloping algebra $U(\mathfrak{g})$ has intermediate
growth \cite{Smith1976}.
It is still a major open question in growth theory whether there exist finitely presented groups with intermediate growth \cite{Grigorchuk2008}.

The dichotomy of the GK-dimension of GWAs over $P_2$
(i.e., $\GK(P_2(\sigma,a))$ is either $3$ or $\infty$)
does not hold for that over $L_2$ (Example \ref{exam_Heisenberg}).
Note that some items in Theorem \ref{thm-mainP} are not equivalent for GWAs over $P_n$ with $n\geq 3$.
For example, the Nagata automorphism of $P_3$ is locally algebraic but
not conjugate to a triangular automorphism (Example \ref{exam-Nagata}).
The reason for this is that
the proof of Theorem \ref{thm-mainP} is
based on the famous theorem of Jung and van der Kulk \cite{Jung1942Uber,vanderKulk1953} asserting
that all automorphisms of $P_2$ are tame,
which is not true for $P_n$ with $n\geq3$ (\cite{Shestakov2003}).
See Remark \ref{remark-P2-P3} for more details.

We also study further the GK-dimension of GWAs over $L_n$.
An automorphism $\sigma\in\Aut(L_n)$ sends a unit to a unit and thus has the form
\begin{align*}
\sigma(z_i)=\alpha_iz_1^{a_{1i}}z_2^{a_{2i}}\cdots z_n^{a_{ni}},
 \ \alpha_i\in k^*, a_{1i},a_{2i},\dots, a_{ni}\in\ZZ,\
i=1,2,\dots,n.
\end{align*}
The automorphism $\sigma$ is determined by the matrix
\(
M_{\sigma}=(a_{ij})\in \ZZ^{n\times n}
\)
and the scalars $\alpha_i\in k^*$, $i=1,2,\dots,n$.
The matrix $M_{\sigma}$ plays an important role in the
GK-dimension of the GWA $L_n(\sigma,a)$.
Our main results on the
GK-dimension of GWAs over $L_n$ are summarized in the following theorem.
\begin{theorem}
  \label{thm-mainL}
  Let $A=k[z_1^{\pm1},z_2^{\pm1},\dots,z_n^{\pm1}](\sigma,a)$ be a GWA.
  Then the following statements are equivalent.
  \begin{enumerate}[(a)]
    \item\label{itemL+1}
    $\GK(A)=n+1$.%\footnote{How to tell if $\GK=\infty$?}
    \item\label{itemL-LA}
    $\sigma$ is locally algebraic.
    \item\label{itemL-FiniteOrd}
    $M$ has a finite order, i.e., $M_{\sigma}^m$ equals the identity matrix for some $m\geq1$.
    \item\label{itemL-PowerBound}
    $M_{\sigma}$ is power-bounded, i.e.,
    there exists $N>0$ such that the absolute values of all entries of $M_{\sigma}^m$ are less than $N$ for all $m\geq1$.
  \end{enumerate}
\end{theorem}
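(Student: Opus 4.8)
The plan is to obtain the equivalences $(\ref{itemL-LA})\Leftrightarrow(\ref{itemL-FiniteOrd})\Leftrightarrow(\ref{itemL-PowerBound})$ and the implication $(\ref{itemL-LA})\Rightarrow(\ref{itemL+1})$ by soft arguments, and to derive $(\ref{itemL+1})\Rightarrow(\ref{itemL-LA})$ from a lower bound on $\GK(A)$. For the bookkeeping, composition in $\Aut(L_n)$ satisfies $M_{\sigma\tau}=M_\sigma M_\tau$, hence $M_{\sigma^m}=M_\sigma^m$, and $\sigma^m(z^\beta)=\lambda_{m,\beta}\,z^{\,M_\sigma^m\beta}$ with $\lambda_{m,\beta}\in k^*$ for every monomial $z^\beta=z_1^{\beta_1}\cdots z_n^{\beta_n}$. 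Thus the $\sigma$-orbit of $z^\beta$ spans a \fd\ space \IFF\ $\{M_\sigma^m\beta:m\in\ZZ\}$ is finite; since every \fd\ subspace of $L_n$ lies in the span of finitely many monomials, $\sigma$ is locally algebraic \IFF\ $\{M_\sigma^m\beta:m\in\ZZ\}$ is finite for all $\beta\in\ZZ^n$ \IFF\ the columns of all $M_\sigma^m$ stay bounded \IFF\ $M_\sigma$ is power-bounded; this gives $(\ref{itemL-LA})\Leftrightarrow(\ref{itemL-PowerBound})$. Next, $(\ref{itemL-FiniteOrd})\Rightarrow(\ref{itemL-PowerBound})$ is clear, and conversely a power-bounded matrix in $\ZZ^{n\times n}$ has only finitely many distinct powers, so $\{M_\sigma^m:m\ge1\}$ is a finite sub-semigroup of $\GL_n(\ZZ)$, hence a subgroup, so $M_\sigma$ has finite order. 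Finally $(\ref{itemL-LA})\Rightarrow(\ref{itemL+1})$ follows from $\GK(L_n)=n$ together with the fact, recalled in the Introduction, that local algebraicity of $\sigma$ forces $\GK(A)=\GK(D)+1$.

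The real work is $(\ref{itemL+1})\Rightarrow(\ref{itemL-LA})$, which I would prove contrapositively: assuming $M:=M_\sigma$ is not power-bounded, I show $\GK(A)\ge n+2$. Since $x$ is a regular element of $A$, the Ore subalgebra $B:=L_n[x;\sigma]$ has $k$-basis $\{z^\gamma x^r:\gamma\in\ZZ^n,\ r\ge0\}$, so $\GK(A)\ge\GK(B)$ and it suffices to bound $\GK(B)$ from below; moreover $L_n[x^p;\sigma^p]\subseteq B$ and $M_{\sigma^p}=M^p$ is again not power-bounded, so $\sigma$ may be replaced by any positive power. If every eigenvalue of $M$ has absolute value $1$, then by Kronecker's theorem each is a root of unity, so some power of $M$ is unipotent, and it is $\ne I$ because $M$ has infinite order. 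Hence, after passing to a power, we may assume either (i) $M$ is unipotent with $(M-I)^s\ne0=(M-I)^{s+1}$ for some $s\ge1$, or (ii) $M$ has spectral radius $\rho>1$.

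The engine is the conjugation identity in $B$, valid for all $v,w\in\ZZ^n$ and $r\ge0$,
\[
z^{w}\,z^{-v}\,x^{r}\,z^{v}\;=\;(\text{nonzero scalar})\;z^{\,w+(M^{r}-I)v}\,x^{r},
\]
which realizes $z^{\,w+(M^{r}-I)v}x^{r}$ as a product of $|w|_1+2|v|_1+r$ factors from the fixed \fd\ subspace $V:=k1+\sum_{i=1}^{n}(kz_i+kz_i^{-1})+kx$, hence as an element of $\sum_{i=0}^{N}V^i$ once $|w|_1+2|v|_1+r\le N$. Taking $v=\lambda e_j$ with $j$ chosen so that $u:=(M-I)^{s}e_j\ne0$ (case (i)) respectively so that $\|M^{r}e_j\|_\infty$ has exponential order $\rho^{r}$ (case (ii)), the exponent $(M^{r}-I)(\lambda e_j)$ has $\ell_\infty$-norm $\asymp\lambda r^{s}$ respectively $\gtrsim\lambda\rho^{r}$ while the cost is only $\asymp\lambda+r$; in particular this exponent can be pushed to size quadratic in the word length at modest cost. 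Letting $r$, $\lambda$ and $w$ vary subject to $|w|_1+2\lambda+r\le N$, a routine lattice-point count — distinguishing whether $\|(M^{r}-I)e_j\|_\infty$ is large enough for the $\lambda$-translates of the $w$-ball to be disjoint — shows that the pairs $\big(w+(M^{r}-I)(\lambda e_j),\,r\big)$ attain $\gtrsim N^{\,n+2}$ distinct values. The corresponding monomials are linearly independent in $B$ and lie in $\sum_{i=0}^{N}V^i$, so $\GK(A)\ge\GK(B)\ge n+2>n+1$ (and in case (ii) one sees $\GK(A)=\infty$ directly from exponential growth). In all cases $\GK(A)\ne n+1$, as required.

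I expect the lattice-point count to be the crux. The obvious short-word families — subset sums of the vectors $M^{k}e_j$, or alternating products $x\,z_{j_1}x\,z_{j_2}\cdots$ — yield only $\asymp N^{\,n+1}$ linearly independent elements in the quasi-unipotent regime, which falls short by exactly one power of $N$; the extra power comes only from allowing the conjugating exponent $v=\lambda e_j$ to be \emph{large}, so that $(M^{r}-I)v$ reaches magnitude quadratic in the word length. This is possible precisely because $D=L_n$ is a \emph{Laurent} polynomial algebra and negative exponents of the $z_i$ may be used without cost — which is also why the dichotomy of Theorem \ref{thm-mainP} has no counterpart over $L_n$ (compare Example \ref{exam_Heisenberg}). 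Secondary points to handle carefully are checking that the scalars produced by applying powers of $\sigma$ to monomials are nonzero (true since $k$ is a field and each $\alpha_i\in k^{*}$), and justifying the reduction to cases (i)–(ii) via Kronecker's theorem and the inclusions $L_n[x^{p};\sigma^{p}]\subseteq B$.
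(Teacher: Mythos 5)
Your proposal is correct in substance but proves the hard implication by a genuinely different route than the paper. The equivalences (\ref{itemL-LA})$\Leftrightarrow$(\ref{itemL-FiniteOrd})$\Leftrightarrow$(\ref{itemL-PowerBound}) and the implication (\ref{itemL-LA})$\Rightarrow$(\ref{itemL+1}) coincide with the paper's Lemma \ref{lemma_pb=la} and Lemma \ref{lem_GK-CA}. For (\ref{itemL+1})$\Rightarrow$(\ref{itemL-LA}), however, the paper does not argue inside $L_n[x;\sigma]$ directly: it invokes Proposition \ref{prop-Zhang97PL}, i.e.\ the sensitive multiplicity condition machinery of Section \ref{sec_SMC} (SMC for $P_n$ proved in the Appendix, transferred to any $D$ with $P_n\subseteq D\subseteq Q_n$ by Corollary \ref{coro-SMCPoly}, then Lemma \ref{lemma-SMC-GK+2}, which rests on Zhang's theorem that $\GK(D[x;\sigma])\geq\GK(D)+2$ when $D$ satisfies SMC and $\sigma$ is not locally algebraic). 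Your contrapositive argument replaces all of this with a self-contained lattice-point count based on the identity $z^{w}z^{-v}x^{r}z^{v}=\lambda\, z^{w+(M^{r}-I)v}x^{r}$; after reducing via Kronecker's theorem (and $|\det M|=1$, which you should state to get spectral radius $>1$ in case (ii)) to the unipotent or expanding cases, choosing $r$ comparable to $N$ makes $\|(M^{r}-I)e_j\|_\infty$ exceed the diameter of the $w$-box in both cases, so the translates are automatically disjoint and the count $\gtrsim N^{n+2}$ goes through; the monomials $z^{\gamma}x^{r}$ with distinct $(\gamma,r)$ are indeed independent. What each approach buys: yours is more elementary and exploits the monomial structure of $L_n$ (units, negative exponents) explicitly, which also explains transparently why the $P_2$-dichotomy of Theorem \ref{thm-mainP} fails over $L_2$ (Example \ref{exam_Heisenberg}); the paper's SMC route is less hands-on but applies uniformly to every algebra between $P_n$ and $Q_n$, which is what yields the more general Theorem \ref{thm-main-}. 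Two small caveats: your parenthetical that case (ii) gives exponential growth ``directly'' is not justified as stated (a Lemma \ref{lem_GK-infty}-type subset-sum argument needs growth ratio at least $2$, so one must first pass to a power and choose the seed monomial carefully), but it is not needed since $\GK\geq n+2$ already contradicts (\ref{itemL+1}); and the injectivity of $(w,\lambda)\mapsto w+\lambda(M^{r}-I)e_j$ should be written out with explicit norm constants, though this is routine.
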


Theorem \ref{thm-mainL} enables us to determine the GK-dimension by computation of integer matrices under certain conditions.
Given $n\geq 1$,
Levitt \cite{Levitt1998} showed that the maximum order $G(n)$ of a matrix in $\GL(n,\ZZ)$ satisfies a Landau-type estimate $\ln G(n)\sim \sqrt{n\ln n}$
and all possible orders of matrices in $\GL(n,\ZZ)$ can be determined by factorization of natural numbers less than $G(n)$  (see Remark \ref{remark-MaxOrd}).

Our results can be applied to determine the GK-dimension, the simplicity (Corollary \ref{coro-Simple}),
and cancellation properties (Corollary \ref{coro-Cancellation}) of some GWAs.

The rest of this paper is organized as follows.
In Section \ref{sec_preliminaries},
we recall basic definitions and list some results from the literature.
In Section \ref{sec_SMC},
we study the sensitive multiplicity condition and
prove Theorem \ref{thm-main-}.
In Section \ref{sce-Poly},
we focus on the growth and GK-dimension of GWAs over polynomial algebras,
where Theorem \ref{thm-mainP} is proved.
In Section \ref{sce-Laurent},
 we first investigate automorphisms of Laurent polynomial algebras,
 then study the GK-dimension of GWAs over Laurent polynomial algebras, and finally prove Theorem \ref{thm-mainL}.

\section{Preliminaries}\label{sec_preliminaries}
In this section,
we recall some terminology related to generalized Weyl algebras and Gelfand-Kirillov dimension.
Our main references for GK-dimension are \cite{Krause-Lenagan_2000} and \cite[Chapter 8]{McConnell-Robson2001}.

Let $k$ be a field and $k^*=k\setminus \{0\}$.
Unless otherwise stated,
all spaces in this article are $k$-spaces,
$\dim(V)$ denotes the dimension of a $k$-space $V$,
all algebras are associative unital $k$-algebras,
and
all automorphisms are $k$-algebra automorphisms.
Denote the space spanned by a set $S$ by $kS$,
particularly by $ka$ if
$S=\{a\}$.
Given subspaces $V$ and $W$ of an algebra $D$,
denote $VW=\sum_{v\in V, w\in W}k(vw)$,
$V^0=k$,
$V^n=V^{n-1}V$ for $n\geq 1$,
and write $VW=Va$ if $W=ka$.
We use $\Z(D)$ (respectively, $\Aut(D)$, $\id=\id_D$) to denote the center (respectively, automorphism group, identity automorphism) of $D$.

\begin{definition}[\cite{Bavula1992Generalized}]
  \label{def-GWA}
Let $D$ be an algebra,
$\sigma\in \Aut(D)$, and $a\in \Z(D)$.
The \emph{generalized Weyl algebra} (GWA, for short)
$A=D(\sigma,a)=D(x,y;\sigma,a)$ with the \emph{base algebra} $D$ is defined as the algebra generated by $D$ and indeterminates $x$ and $y$, subject to the defining relations:
\begin{align*}
  \label{eqn_def-GWA-n}
 xd=\sigma(d)x, \ yd=\sigma^{-1}(d)y,\
 yx=a, \ xy=\sigma(a), \text{ for all }d\in D.
 \end{align*}
The automorphism $\sigma$ and element $a$ are called \emph{defining automorphism} and \emph{defining element} of the GWA $A$ respectively.
\end{definition}

The following example indicates why the algebra $D(\sigma,a)$ is called a GWA.
\begin{example}
  \label{exam_WeylAlg}\upshape
 The \emph{first Weyl algebra} $A_1$ over $k$ is the ring of polynomials in indeterminates $x$ and $y$ with coefficients in $k$, subject to the relations
  $yx-xy=1$.
 Then
 \[
 A_1\cong D(\sigma,a), \ x\mapsto x, \ y\mapsto y,
 \]
  where $D=k[h]$, $a=h=yx$ and $\sigma(h)=h-1$.
 The $n$-th Weyl algebra can be presented as an iterated GWA. See \cite{Zhao2018Gelfand} for more detailed examples of GWAs.
\end{example}

A \emph{$\ZZ$-filtration} of algebra $A$ is a sequence of subspaces
  \[
  \cdots\subseteq F_{i-1}\subseteq  F_{i}\subseteq F_{i+1}\subseteq \cdots, i\in\ZZ
  \]
  such that $1\in F_0$,
  $F_iF_j\subseteq F_{i+j}$ for all $i,j\in\ZZ$,
  and $A=\bigcup_{i\in\ZZ}F_i$.
  An algebra with a $\ZZ$-filtration is called a $\ZZ$-filtered algebra.
  The vector space $\gr(A)=\oplus_{i\in\ZZ} F_i/F_{i-1}$
  equipped with the linear multiplication defined by the rule
  \[
  (a+A_{i-1})(b+F_{j-1})=ab+F_{i+j-1}
  \]
  is called the \emph{the associated graded algebra} of $A$.
  The linear mapping $\gr:A\to \gr(A)$ is defined by
  $\gr(a)=a+F_{i-1}$ for all $a\in F_i/F_{i-1}$.
  %is called a \emph{leading term} map.
  For a subspace $V\subseteq A$,
  denote $\gr(V)=\sum_{v\in V}k\gr(v)$.
  The following lemma lists some basic properties of the map $\gr$.
  For more properties related to filtered algebras and graded algebras, see \cite[Chapter 6]{Krause-Lenagan_2000}.

\begin{lemma}[\cite{zhang1997note}, Lemma 2.1]
  \label{lemma-LeadTermMap}
  Let $A$ be a $\ZZ$-filtered algebra, $W$ be a \fd\ subspace of $A$, and
  $a\in A$.
  Then
\begin{enumerate}[(i)]
  \item
  $\gr(Wa)\supseteq \gr(W)\gr(a)$.
  \item
  $\dim(W)=\dim(\gr(W))$.
  \item
  $\dim(W^m)\geq\dim((\gr(W))^m)$.
\end{enumerate}
\end{lemma}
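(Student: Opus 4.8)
The plan is to unwind the definitions of the $\ZZ$-filtration and of the symbol map $\gr$, proving the three items in the order (i), (ii), (iii), since the last two build on the first. For $0\neq a\in A$ write $\deg(a)$ for the unique integer $i$ with $a\in F_i\setminus F_{i-1}$ (its existence uses that the filtration is exhaustive and separated), so that $\gr(a)=a+F_{\deg(a)-1}$ is a nonzero homogeneous element of $\gr(A)$; put $\gr(0)=0$. I will repeatedly use two elementary facts that follow at once from $\gr(U)=\sum_{u\in U}k\gr(u)$: the assignment $\gr$ is monotone on subspaces ($U\subseteq U'$ forces $\gr(U)\subseteq\gr(U')$) and superadditive ($\gr(U_1+U_2)\supseteq\gr(U_1)+\gr(U_2)$, and similarly for arbitrary sums of subspaces). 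Note that $\gr$ is not additive, so $\gr(U)$ must always be understood as the span of the individual symbols $\gr(u)$.

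For (i) the crux is the pointwise identity: for each $w\in W$, either $\gr(w)\gr(a)=0$, or $\deg(wa)=\deg(w)+\deg(a)$ and $\gr(w)\gr(a)=\gr(wa)$. This is immediate from the definition of the product on $\gr(A)$, since $\gr(w)\gr(a)=wa+F_{\deg(w)+\deg(a)-1}$ is nonzero precisely when $wa\notin F_{\deg(w)+\deg(a)-1}$, in which case it equals $\gr(wa)$. As multiplication on $\gr(A)$ is bilinear and $\gr(W)$ is spanned by $\{\gr(w):w\in W\}$, the subspace $\gr(W)\gr(a)$ is spanned by the elements $\gr(w)\gr(a)$, each of which is either $0$ or $\gr(wa)$ with $wa\in Wa$, hence lies in $\gr(Wa)$. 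Taking spans gives $\gr(W)\gr(a)\subseteq\gr(Wa)$.

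For (ii) I would pass to the induced filtration $W_i:=W\cap F_i$ of $W$. Exhaustiveness and separatedness of $\{F_i\}$ together with $\dim W<\infty$ make this a finite filtration $0=W_m\subseteq W_{m+1}\subseteq\cdots\subseteq W_N=W$. The inclusion $W_i\hookrightarrow F_i$ induces a map $W_i/W_{i-1}\to F_i/F_{i-1}$, which is injective because $W\cap F_{i-1}=W_{i-1}$; a direct check shows its image is exactly the degree-$i$ homogeneous component of $\gr(W)$, namely $\operatorname{span}\{\gr(w):w\in W,\ \deg(w)=i\}$. Since $\gr(W)$ is spanned by homogeneous elements it is the direct sum of these components over $i$, so $\dim\gr(W)=\sum_i\dim(W_i/W_{i-1})=\dim W$.

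For (iii) I would first upgrade (i) to a statement about two subspaces: for any subspace $V$, using $WV=\sum_{v\in V}Wv$, monotonicity, superadditivity, (i), and bilinearity, one obtains $\gr(WV)\supseteq\sum_{v\in V}\gr(Wv)\supseteq\sum_{v\in V}\gr(W)\gr(v)=\gr(W)\gr(V)$. Iterating yields $\gr(W^m)\supseteq(\gr(W))^m$, and then (ii), applied to the \fd\ subspace $W^m$, gives $\dim(W^m)=\dim\gr(W^m)\geq\dim((\gr(W))^m)$. The one step needing real care --- the \emph{main obstacle}, such as it is --- is (ii), where one must match the homogeneous components of $\gr(W)$ with the subquotients $W_i/W_{i-1}$ of the induced filtration while keeping track of the non-additivity of $\gr$; everything else is a formal manipulation of spans and of the bilinear product on $\gr(A)$.
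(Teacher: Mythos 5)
Your proof is correct; the paper itself gives no argument for this lemma (it is quoted verbatim from Zhang's Lemma 2.1), and what you write is essentially the standard proof of that result: the pointwise identity $\gr(w)\gr(a)\in\{0,\gr(wa)\}$ for (i), the induced filtration $W\cap F_i$ matched with the homogeneous components of $\gr(W)$ for (ii), and the iteration $\gr(W^m)\supseteq(\gr(W))^m$ combined with (ii) for (iii). Your remark that one needs the filtration to be separated for $\gr$ to be defined on elements is a fair point not made explicit in the paper's definition of a $\ZZ$-filtration, but it holds for the graded filtration of a GWA used there, so nothing is lost.
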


The GWA $A=D(x,y;\sigma,a)$ is a free left $D$-module with a basis
$\{x^i,y^j:i\geq 0, j>0\}$.
There is a natural
 $\ZZ$-graded structure for a GWA, i.e.,
$D(\sigma,a)=\bigoplus_{i\in \ZZ}A_i$, where $A_i=Dx^i$ if $i>0$,
$A_i=Dy^{-i}$ if $i<0$, and $A_i=D$ if $i=0$.
Let $F_i=\cup_{j\leq i}A_j$.
Then
$$
  \cdots\subseteq F_{i-1}\subseteq  F_{i}\subseteq F_{i+1}\subseteq \cdots, i\in\ZZ
$$
is a $\ZZ$-filtration for $A$.
Recall that, given $\sigma\in\Aut(D)$,
the Ore extension $D[x;\sigma]$ over an algebra $D$ is the algebra generated by $D$ and $x$ subject to the relation $xd=\sigma(d)x$ for all $d\in D$.
The following lemma follows immediately from the $\ZZ$-graded structure of GWAs.

\begin{lemma}
  \label{lemma_Ore-subalg}
The Ore extension $D[x;\sigma]$ is a subalgebra of the GWA $D(\sigma,a)$.
\end{lemma}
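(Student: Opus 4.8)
The plan is to identify the Ore extension with the non-negative part of the natural $\ZZ$-grading on the GWA. I would set $B := \bigoplus_{i \geq 0} A_i = \bigoplus_{i \geq 0} Dx^i$; since $A_i A_j \subseteq A_{i+j}$ and $i + j \geq 0$ whenever $i, j \geq 0$, the subspace $B$ is closed under multiplication, and as it contains $1 \in D$ and $x$ it is a subalgebra of $A$. It then remains to show $B \cong D[x;\sigma]$.

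Next I would build the isomorphism explicitly. The defining relation $xd = \sigma(d)x$ of $A$ holds inside $B$, so the universal presentation of the Ore extension yields an algebra homomorphism $\phi \colon D[x;\sigma] \to B$ that restricts to the identity on $D$ and sends $x$ to $x$; it is surjective because $D$ and $x$ generate $B$. For injectivity, repeated use of $xd = \sigma(d)x$ lets me write any element of $D[x;\sigma]$ as $\sum_{i=0}^n d_i x^i$ with $d_i \in D$, and its image under $\phi$ is the same expression, now read inside $A$. Since $A$ is a free left $D$-module with a basis containing $\{x^i : i \geq 0\}$, the vanishing of $\sum_{i=0}^n d_i x^i$ in $A$ forces every $d_i = 0$, so $\ker\phi = 0$ and $\phi$ is an isomorphism, identifying $D[x;\sigma]$ with the subalgebra $B$ of $A$.

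I do not expect a genuine obstacle here: the statement is a direct consequence of the $\ZZ$-graded structure together with the freeness of $A$ as a left $D$-module recorded just above. The only subtlety to keep in mind is not to presuppose that the abstract Ore extension already sits inside $A$; one has to produce $\phi$ from the presentation and then verify injectivity via the GWA basis.
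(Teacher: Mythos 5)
Your proof is correct and follows the same route the paper intends: the paper simply asserts the lemma ``follows immediately from the $\ZZ$-graded structure of GWAs,'' and your argument spells this out by identifying $D[x;\sigma]$ with the non-negative part $\bigoplus_{i\geq 0}Dx^i$ of the grading and using freeness of $A$ as a left $D$-module to get injectivity. No gaps; you have just made explicit what the paper leaves implicit.
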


It is routine to check that GWAs have the following universal property (cf. \cite[Chapter 1.2.5]{McConnell-Robson2001}).

\begin{proposition}
  \label{prop-universal}
  Let $D(x,y;\sigma,a)$ be a GWA and $B$ be an algebra.
  Suppose $\phi:D\to B$ is an algebra homomorphism,
  and $x',y'\in B$ satisfy
  \[
  x'\phi(d)=\phi(\sigma(d))x', y'\phi(d)=\phi(\sigma^{-1}(d))y', y'x'=\phi(a), x'y'=\phi(\sigma(a)), \text{ for all } d\in D.
  \]
  Then there exists a unique algebra homomorphism
  $\psi:D(x,y;\sigma,a)\to B$ such that $\psi(x)=x'$, $\psi(y)=y'$ and the following diagram
  \begin{align*}
  \xymatrix{
  D \ar[rr]^{} \ar[dr]_{\phi}
                &  &    D(x,y;\sigma,a) \ar[dl]^{\psi}    \\
                & B                 }
  \end{align*}
is commutative.
\end{proposition}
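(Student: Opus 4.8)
The plan is to use the standard "free algebra modulo relations" description of $D(x,y;\sigma,a)$, which is available because a GWA is by definition generated by the image of $D$ together with $x$ and $y$ subject to the listed relations. First I would build a candidate homomorphism on the free product (coproduct) $D \ast k\langle X, Y\rangle$: by the universal property of the coproduct there is a unique algebra homomorphism $\tilde\psi$ from this free product to $B$ extending $\phi$ on $D$ and sending $X \mapsto x'$, $Y \mapsto y'$. The GWA $D(x,y;\sigma,a)$ is the quotient of $D \ast k\langle X, Y\rangle$ by the two-sided ideal $I$ generated by the elements $Xd - \sigma(d)X$, $Yd - \sigma^{-1}(d)Y$, $YX - a$, and $XY - \sigma(a)$ for $d \in D$ (here I am using that these are exactly the defining relations in Definition 0.3). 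The hypotheses on $x', y'$ say precisely that $\tilde\psi$ annihilates each of these generators of $I$, hence $\tilde\psi(I) = 0$, so $\tilde\psi$ factors through the quotient to give an algebra homomorphism $\psi : D(x,y;\sigma,a) \to B$ with $\psi|_D = \phi$, $\psi(x) = x'$, $\psi(y) = y'$, and the diagram commutes.

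Uniqueness is then immediate: any algebra homomorphism $\psi'$ making the diagram commute must agree with $\phi$ on $D$ and satisfy $\psi'(x) = x'$, $\psi'(y) = y'$; since $D(x,y;\sigma,a)$ is generated as a $k$-algebra by $D \cup \{x,y\}$, two algebra homomorphisms agreeing on a generating set coincide. In fact one can be even more explicit using the free left $D$-module basis $\{x^i, y^j : i \ge 0,\ j > 0\}$ of the GWA noted just above: $\psi$ is forced to be $\psi\bigl(\sum_i d_i x^i + \sum_j d_j' y^j\bigr) = \sum_i \phi(d_i) (x')^i + \sum_j \phi(d_j') (y')^j$, and one checks directly — using the four relations among $x', y', \phi(D)$ — that this formula is multiplicative. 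Either route works; the coproduct-and-quotient argument is cleaner and avoids case-checking on monomials.

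There is essentially no hard step here; the only thing that requires a word of care is to confirm that $D(x,y;\sigma,a)$ genuinely is the presented quotient $\bigl(D \ast k\langle X,Y\rangle\bigr)/I$ rather than some further quotient — i.e. that the relations listed in the definition are a complete set of defining relations and nothing has been tacitly added. This is exactly what "defined as the algebra generated by $D$ and $x, y$ subject to the relations" means in Definition 0.3, and the consistency of this presentation is witnessed by the stated $D$-module basis $\{x^i, y^j\}$ (which shows the quotient has not collapsed). Granting that, the argument above goes through verbatim, and this is precisely the verification referenced by the phrase "It is routine to check."
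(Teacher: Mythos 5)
Your proposal is correct: the paper offers no written proof (it simply declares the verification routine, citing \cite[1.2.5]{McConnell-Robson2001}), and your coproduct-and-quotient argument --- factor the map $D \ast k\langle X,Y\rangle \to B$ through the ideal of defining relations, which the hypotheses on $x',y'$ annihilate, then get uniqueness from the fact that $D\cup\{x,y\}$ generates --- is exactly the intended routine verification. Your remark that the free $D$-module basis $\{x^i,y^j\}$ certifies that the presentation has not collapsed is a sensible extra check, though it is not needed for the universal property itself.
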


Let $\Phi$ denote the set of all eventually monotone increasing functions $f:\NNN\to \RR^+$.
For $f,g\in\Phi$ set $f\leq^* g$ if and only if there exist $c,m,N\in\NNN$ such that
\[
f(n)\leq cg(mn) \text{ for all } n>N
\]
and set
\[
f\sim g\ \Leftrightarrow
f\leq^*g \text{ and }g\leq^*f.
\]
With a slight abuse of notation,
we often write $f(n)\sim g(n)$ to mean that $f\sim g$.
One can check that $\sim$ is an equivalent relation on $\Phi$.
For $f\in\Phi$ the equivalence class $\mathcal{G}(f)\in \Phi/\sim$ is called the \emph{growth of function $f$}.
At the risk of abusing the notation,
we also write the growth of $f$ as $\mathcal{G}(f(n))$.
The partial order on $\Phi/\sim$ induced by $\leq^*$ is denoted by $\leq $.

If an algebra $A$ is generated (as an algebra) by a subspace $V$,
then $V$ is called a \emph{generating subspace} of $A$.
If $V$ and $W$ are two \fd\ generating subspaces of $A$,
then $\mathcal{G}(d_V)=\mathcal{G}(d_W)$ (\cite[Lemma 1.1]{Krause-Lenagan_2000}),
where $d_V(n)=\dim\left(\sum_{i=0}^nV^i\right)$ for all $n\in\NNN$.
Thus we can define the  notion of growth of algebras,
which is independent from the choice of generating subspaces.

\begin{definition}
  \label{def-growth}
  Let $A$ be a \fg\ algebra.
  \begin{enumerate}[(i)]
    \item
    Suppose $A$ is
  generated by a \fd\ subspace $V$.
  Then 
  \[
  d_V(n):=\dim\left(\sum_{i=0}^nV^i\right), \ n\in\NNN
  \]is called the \emph{growth function} of $A$ with respect to $V$ and
  $\mathcal{G}(A):=\mathcal{G}(d_V)$ is called the \emph{growth of $A$}.
  \item
  A \fg\ algebra $A$ is said to have
  \begin{itemize}
    \item
    \emph{polynomial growth} if $\mathcal{G}(A)=\mathcal{G}(n^d)$ for some $d\in\NNN$;
    \item
    \emph{exponential growth} if
  $\mathcal{G}(A)=\mathcal{G}(e^n)$;
    \item
    \emph{intermediate growth} if
  $\mathcal{G}(A)<\mathcal{G}(e^n)$
  yet $\mathcal{G}(A)\not\leq\mathcal{G}(n^m)$ for all $m\in\NNN$.
  \item
  \emph{alternative growth} if $A$ has either polynomial growth or exponential growth.
  \end{itemize}
  \end{enumerate}
\end{definition}

Functions within a wide range can be realized as
growth functions of \fg\ algebras.
Particularly, exponential growth is the largest one that an algebra can have.
Recently,
Bell and Zelmanov \cite{Bell2020} give a complete characterization of the functions that can occur as the growth functions of algebras.

Gelfand-Kirillov dimension is a numerical characteristic of growth, which works for not only \fg\ algebras but also infinitely generated algebras.

\begin{definition}
  \label{def-GKdim}
  Let $A$ be an algebra.
The \emph{Gelfand-Kirillov dimension} (\emph{GK-dimension}, for short) of $A$ is defined as
\[
\GK(A):=\sup_V{\limsup_{n\to\infty}}\log_n d_V(n)
\]
where the supremum is taken over all finite dimensional subspaces $V$ of $A$.
\end{definition}

A \fd\ subspace $V\subseteq A$ is called a \emph{subframe} of $A$ if $1\in V$.
If $A$ is generated by a subframe $V$ (and thus $A$ is \fg),
then
\[
\GK(A)={\limsup_{n\to\infty}}\log_n \dim(V^n).
 \]

The following well known fact is useful in the computation with growth functions and GK-dimension. %The proof is straightforward and is omitted.

\begin{lemma}
  \label{lemma-GK-sum}
  If $f(n)\in\Phi$ is a polynomial of degree $d\in\NNN^*$,
  then $g(n):=f(0)+f(1)+\cdots+f(n)$ is a polynomial of degree $d+1$.
 \end{lemma}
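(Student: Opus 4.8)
The plan is to reduce the statement to the classical \emph{hockey-stick} summation identity for binomial coefficients. First I would recall that the polynomials $\binom{x}{0},\binom{x}{1},\dots,\binom{x}{d}$, where $\binom{x}{j}=\tfrac{1}{j!}x(x-1)\cdots(x-j+1)$, form a basis of the $\RR$-vector space of polynomials of degree at most $d$, and that $\binom{x}{j}$ has degree exactly $j$. Since $f$ has degree $d$, it can therefore be written uniquely as $f(x)=\sum_{j=0}^{d}c_j\binom{x}{j}$ with $c_j\in\RR$ and $c_d\neq 0$.

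Next I would invoke the identity $\sum_{i=0}^{n}\binom{i}{j}=\binom{n+1}{j+1}$, which holds for all integers $n,j\geq 0$ and follows at once by induction on $n$ from Pascal's rule. Summing the expansion of $f$ term by term then gives
\[
g(n)=\sum_{i=0}^{n}f(i)=\sum_{j=0}^{d}c_j\sum_{i=0}^{n}\binom{i}{j}=\sum_{j=0}^{d}c_j\binom{n+1}{j+1},
\]
which exhibits $g$ as a polynomial in $n$. Its degree is exactly $d+1$: the summand $c_d\binom{n+1}{d+1}$ is a polynomial of degree $d+1$ in $n$ with nonzero leading coefficient $c_d/(d+1)!$, while every other summand has degree at most $d$, so no cancellation of the top-degree term can occur.

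Finally, since $f$ takes values in $\RR^+$, the function $g$ is increasing, hence $g\in\Phi$, and the computation above identifies it as a polynomial of degree $d+1$, as claimed. The only point that needs any care is precisely the non-vanishing of the leading coefficient of $g$; expanding in the binomial basis rather than the monomial basis makes this transparent and avoids an induction on $d$. (One could alternatively argue by such an induction, using $\sum_{i=0}^{n}i^{d}=\tfrac{1}{d+1}n^{d+1}+(\text{lower order terms})$ together with linearity, but establishing that power-sum formula needs essentially the same identity, so the binomial-coefficient computation is the more economical route.)
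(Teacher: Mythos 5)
Your argument is correct and complete: expanding $f$ in the binomial-coefficient basis, applying the identity $\sum_{i=0}^{n}\binom{i}{j}=\binom{n+1}{j+1}$, and checking that the top term $c_d\binom{n+1}{d+1}$ cannot be cancelled does establish that $g$ is a polynomial of degree exactly $d+1$. There is nothing in the paper to compare it with: the lemma is stated there as a well-known fact with no proof supplied, so your write-up simply fills in a justification the paper leaves implicit, and the hockey-stick route you chose is a clean, standard way to do it (it also quietly handles the positivity of the leading coefficient, the one point where a naive induction on $d$ would require the same power-sum identity you mention).
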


The following lemma provides useful properties of GWAs,
which will be used later.
\begin{lemma}[Lemma 2.7 of \cite{bavula2001isomorphism}]
  \label{lemma_iso}
Let $A=D(\sigma,a)$ be a GWA and let $\tau\in\Aut(D)$.
Then
\begin{enumerate}[(i)]
\item
 $A\cong D(\tau^{-1}\sigma\tau,\tau^{-1}(a))$.
\item
$A\cong D(\sigma^{-1},\sigma(a))$.
\item
If $\lambda $ is a central unit in $D$, then $A\cong D(\sigma; a\lambda)$.
In particular, if $a$ is a unit then $A\cong D(x^{\pm1};\sigma,1)= D[x^{\pm1};\sigma]$, the twisted Laurent polynomial algebra over $D$.
\end{enumerate}
\end{lemma}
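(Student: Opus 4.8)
The plan is to prove Lemma \ref{lemma_iso} by invoking the universal property of generalized Weyl algebras (Proposition \ref{prop-universal}) in each of the three cases, constructing the isomorphisms and their inverses explicitly and then checking they compose to the identity.

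\medskip

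For part (i), I would take $B=D(\tau^{-1}\sigma\tau,\tau^{-1}(a))$ with its distinguished generators $x',y'$ and its copy of $D$ sitting inside, and consider the homomorphism $\phi=\tau^{-1}\colon D\to B$ (viewing $D\subseteq B$ as the base algebra of $B$). The verification reduces to checking the four defining relations of Proposition \ref{prop-universal}: $x'\phi(d)=\phi(\sigma(d))x'$ amounts to $x'\tau^{-1}(d)=\tau^{-1}(\sigma(d))x'$, which is exactly the defining relation of $B$ applied to the element $\tau^{-1}(d)$, since $\tau^{-1}\sigma\tau$ applied to $\tau^{-1}(d)$ is $\tau^{-1}(\sigma(d))$; similarly for $y'$; and $y'x'=\phi(a)=\tau^{-1}(a)$, $x'y'=\phi(\sigma(a))$ hold by the defining relations of $B$ once one notes $(\tau^{-1}\sigma\tau)(\tau^{-1}(a))=\tau^{-1}(\sigma(a))=\phi(\sigma(a))$. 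So Proposition \ref{prop-universal} yields $\psi\colon A\to B$ with $\psi(x)=x'$, $\psi(y)=y'$. Symmetrically (replacing $\tau$ by $\tau^{-1}$, or directly using $\phi=\tau\colon D\to A$) one builds $\psi'\colon B\to A$, and since $\psi\psi'$ and $\psi'\psi$ fix $x,y$ (resp. $x',y'$) and restrict to $\tau^{-1}\tau=\id$ (resp. $\id$) on the base algebra, they are the identity by uniqueness in Proposition \ref{prop-universal}. Hence $\psi$ is an isomorphism.

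\medskip

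For part (ii), I would take $B=D(\sigma^{-1},\sigma(a))$, with distinguished generators called $x',y'$, and apply Proposition \ref{prop-universal} to $A=D(\sigma,a)$ with $\phi=\id_D$ but with the roles of the generators swapped: set the target elements to be $y'$ (in place of $x$) and $x'$ (in place of $y$). The relations to check become: $y'd=\sigma(d)y'$, which is the defining relation of $B$ since the defining automorphism of $B$ is $\sigma^{-1}$ and so $y'$ in $B$ satisfies $y'd=(\sigma^{-1})^{-1}(d)y'=\sigma(d)y'$; likewise $x'd=\sigma^{-1}(d)x'$; and $x'y'=a$, $y'x'=\sigma(a)$, which are respectively $yx=a$, $xy=\sigma(a)$ in $B$ with the generator names swapped, i.e. the $yx$ and $xy$ relations of $B$ read $y'x'=\sigma(a)$ and $x'y'=(\sigma^{-1})(\sigma(a))=a$. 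Thus there is $\psi\colon A\to B$, $\psi(x)=y'$, $\psi(y)=x'$; the inverse is constructed the same way swapping back, and uniqueness gives mutual inverses.

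\medskip

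For part (iii), with $\lambda$ a central unit of $D$: choose a central unit $\mu$ with $\sigma(\mu)\mu^{-1}=\lambda$ if possible — but in general such a $\mu$ need not exist, so instead I would apply Proposition \ref{prop-universal} directly. Take $B=D(\sigma,a\lambda)$ with generators $x',y'$, and map $A=D(\sigma,a)$ into $B$ via $\phi=\id_D$, $x\mapsto \mu x'$, $y\mapsto y'$ where $\mu$ is chosen so that $y'(\mu x')=\mu\sigma^{-1}(\mu)^{-1}\cdot$\dots — this is the delicate point. Actually the clean route, and the one I expect to use, is: the relation $yx=a$ versus $y'x'=a\lambda$ forces a rescaling by $\lambda$, and since $\lambda$ is central one checks that $x\mapsto x'$, $y\mapsto \lambda^{-1}y'$, $d\mapsto d$ satisfies the needed relations for a map $A\to B$ — indeed $(\lambda^{-1}y')x'=\lambda^{-1}a\lambda=a$ (centrality of $\lambda$), $x'(\lambda^{-1}y')=\lambda^{-1}\sigma(a)\lambda$; here one needs $\lambda^{-1}\sigma(a)\lambda=\sigma(a)$, which holds since $\sigma(a)$ is central, hence equals $\sigma(a)$, matching $xy=\sigma(a)$ — but wait, one also needs $x'(\lambda^{-1}y')=\sigma(a)$ while the $xy$-relation in $B$ gives $x'y'=\sigma(a\lambda)=\sigma(a)\sigma(\lambda)$, so the scalar must in fact be $\sigma(\lambda)^{-1}$ on that side; reconciling the two sides is exactly what requires $\lambda$ to be such that the left multiplier on $x$ and $y$ can be chosen compatibly. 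I expect this reconciliation — finding the correct scalars (a single central unit absorbing $\lambda$ consistently across all four relations) — to be the main obstacle, and I would resolve it by observing that replacing $x'$ by $\lambda^{-1}x'$ (equivalently conjugating/rescaling one generator) makes $y'x'\mapsto a$ and $x'y'\mapsto \sigma(\lambda)^{-1}\sigma(a\lambda)=\sigma(a)$ simultaneously, so $\phi=\id_D$, $x\mapsto x'$, $y\mapsto \lambda^{-1}y'$ does work after all once one is careful that $\lambda$ central implies $\sigma^{-1}(\lambda)$ appears in the $yd$-relation but cancels because $\lambda^{-1}y'd=\lambda^{-1}\sigma^{-1}(d)y'=\sigma^{-1}(d)\lambda^{-1}y'$ by centrality. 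The inverse map sends $x'\mapsto x$, $y'\mapsto \lambda y$, and uniqueness in Proposition \ref{prop-universal} shows the two are mutually inverse. The final sentence of (iii), that $a$ a unit gives $A\cong D[x^{\pm1};\sigma]$, then follows by taking $\lambda = \sigma(a)^{-1}$ (a central unit) so that $A\cong D(\sigma; a\cdot\sigma(a)^{-1})$ — and one checks that in $D(\sigma;b)$ with $b$ a central unit, $x$ becomes invertible with $x^{-1}=b^{-1}y$, identifying the GWA with the twisted Laurent extension $D[x^{\pm1};\sigma]$ via $x\mapsto x$, $y\mapsto b x^{-1}$.
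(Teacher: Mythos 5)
Your proposal is correct and follows essentially the same route as the paper: the paper simply exhibits the maps (i) $x\mapsto x$, $y\mapsto y$, $d\mapsto\tau^{-1}(d)$; (ii) $x\mapsto y$, $y\mapsto x$, $d\mapsto d$; (iii) $x\mapsto x\lambda^{-1}$, $y\mapsto y$, $d\mapsto d$, and your argument is the same construction fleshed out via Proposition \ref{prop-universal}, with (iii) differing only in that you rescale $y$ by $\lambda^{-1}$ instead of $x$, which after your (ultimately correct) reconciliation of the $xy$-relation is an equivalent choice.
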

\begin{proof}
  The isomorphisms are given respectively by (i) $x\mapsto x$, $y\mapsto y$ and $d\mapsto \tau^{-1}(d)$ for all $d\in D$;
  (ii) $x\mapsto y$, $y\mapsto x$ and $d\mapsto d$ for all $d\in D$;
  (iii) $x\mapsto x\lambda^{-1}, y\mapsto y$ and $d\mapsto d$ for all $d\in D$.
\end{proof}

\begin{definition}\upshape
\label{def-la}
Suppose $D$ is an algebra and $\sigma,\tau\in\Aut(D)$.
\begin{enumerate}[(i)]
  \item
  $\tau$ is \emph{conjugate} to $\sigma$ if
  $\tau=\eta^{-1} \sigma \eta$ for some $\eta\in \Aut(D)$.
  \item
  A subspace $V$ of $D$ is said to be \emph{$\sigma$-stable} if $\sigma(V)=V$.
  \item
  $\sigma$ has a \emph{finite order} if $\sigma^m=\id$ for some $m\geq1$.
  \item
  $\sigma$ is \emph{inner} if there exists a unit $u\in D$ such that
  $\sigma(d)=udu^{-1}$ for all $d\in D$.
  \item
  $\sigma$ is \emph{locally algebraic}
  %or \emph{locally finite} (\cite{zhang1997note,Leroy1988On})
  if every \fd\ subspace of $D$  is contained in a \fd\ $\sigma$-stable subspace of $D$.
\end{enumerate}
\end{definition}

\begin{lemma}
  \label{lemma-LocAlgIff}
  An automorphism $\sigma\in\Aut(D)$ is locally algebraic if and only if
  for every $a\in D$ the set $\{\sigma^i(a): i\in\NNN\}$
is contained in a \fd\ subspace of $D$.
\end{lemma}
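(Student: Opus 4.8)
The plan is to prove the two implications separately, with the forward direction being immediate and the reverse direction reducing to a one-line piece of linear algebra.

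For the ``only if'' direction, I would assume $\sigma$ is locally algebraic and fix $a\in D$. The one-dimensional subspace $ka$ is finite dimensional, so by hypothesis it is contained in a finite dimensional $\sigma$-stable subspace $V$ of $D$. Since $\sigma(V)=V$ forces $\sigma^i(a)\in V$ for all $i$, the set $\{\sigma^i(a):i\in\NNN\}$ is contained in the finite dimensional subspace $V$, which is exactly what is required.

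For the ``if'' direction, let $W\subseteq D$ be an arbitrary finite dimensional subspace with $k$-basis $a_1,\dots,a_m$. By hypothesis, for each $j$ the set $\{\sigma^i(a_j):i\in\NNN\}$ lies in some finite dimensional subspace of $D$; hence so does their union, and therefore the subspace
\[
U:=\sum_{j=1}^{m}\sum_{i\in\NNN}k\,\sigma^i(a_j)
\]
is finite dimensional. Taking $i=0$ gives $W\subseteq U$, and since $\sigma\bigl(\sigma^i(a_j)\bigr)=\sigma^{i+1}(a_j)\in U$ we get $\sigma(U)\subseteq U$.

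The remaining point — and the only place requiring any care — is to upgrade $\sigma(U)\subseteq U$ to $\sigma(U)=U$, so that $U$ is genuinely $\sigma$-stable. Here I would use that $\sigma$ is an automorphism of $D$, hence injective on $D$, so its restriction $\sigma|_U\colon U\to U$ is an injective $k$-linear endomorphism of the finite dimensional space $U$ and therefore surjective; thus $\sigma(U)=U$. This produces a finite dimensional $\sigma$-stable subspace containing $W$, completing the argument. I do not expect any serious obstacle: the whole content is the observation that finite dimensionality together with injectivity of $\sigma$ turns the easy inclusion $\sigma(U)\subseteq U$ into the equality needed for $\sigma$-stability.
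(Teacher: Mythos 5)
Your proof is correct and follows essentially the same route as the paper: the forward direction via a $\sigma$-stable space containing $ka$, and the reverse direction by spanning the orbits of a basis of $W$. The only difference is that you spell out explicitly the step the paper leaves implicit, namely that injectivity of $\sigma$ on the finite dimensional space $U$ upgrades $\sigma(U)\subseteq U$ to $\sigma(U)=U$, which is a worthwhile clarification but not a different argument.
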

\begin{proof}
  ($\Rightarrow$) Suppose $\sigma$ is locally algebraic.
  Then, for every $a\in D$, the subspace $k\{a\}$ is \fd\
  and thus contained in a \fd\ $\sigma$-stable subspace $U$.
  As a result $\{\sigma^i(a): i\in\NNN\}\subseteq U$.

  ($\Leftarrow$)
  Suppose $V=k\{a_1,a_2,\dots,a_m\}$ is a \fd\ subspace of $D$.
  By the assumption, each space $k\{a_i\}$ is contained in a \fd\
  $\sigma$-stable subspace $U_i\subseteq D$, $1\leq i\leq m$.
  Then $V\subseteq U:=U_1+U_2+\dots+U_m$.
  Clearly $U$ is \fd\ and $\sigma$-stable.
\end{proof}

Locally algebraicity has a close relation with the GK-dimension of algebras extended by automorphisms.

\begin{lemma}[\cite{Zhao2018Gelfand}, Lemma 3.1 and Theorem 3.4]
  \label{lem_GK-CA}
  Let $A=D(\sigma,a)$ be a GWA.
  Then
  \begin{enumerate}[(i)]
    \item
    $\GK(A)\geq \GK(D)+1$.
    \item
    $\GK(A)=\GK(D)+1$ if moreover $\sigma$ is locally algebraic.
  \end{enumerate}

\end{lemma}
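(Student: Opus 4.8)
\emph{Plan.} The inequality $\GK(A)\geq\GK(D)+1$ in (i) holds in complete generality and I would get it by displaying a large subspace inside the powers of a single subframe; the equality in (ii) needs a matching upper bound, and this is exactly where local algebraicity of $\sigma$ enters. \emph{Lower bound (i).} I would exploit that $A$ is free as a left $D$-module on $\{x^i:i\geq0\}\cup\{y^j:j\geq1\}$, so that for $j\geq0$ the map $d\mapsto dx^j$ is injective with image inside the graded piece $A_j=Dx^j$. Fix a subframe $V$ of $D$, put $V_{(n)}=\sum_{i=0}^nV^i$ and $W=V+kx\subseteq A$. Since $1\in V\subseteq W$, for every $0\leq j\leq n$ we have $V_{(n)}x^j\subseteq W^{2n}$, and these subspaces lie in pairwise distinct graded components, each an isomorphic copy of $V_{(n)}$; hence
\[
\dim\Bigl(\sum_{i=0}^{2n}W^i\Bigr)\ \geq\ \sum_{j=0}^{n}\dim\bigl(V_{(n)}x^j\bigr)\ =\ (n+1)\,\dim V_{(n)}.
\]
Taking $\log_{2n}$, letting $n\to\infty$ (the factor $n+1$ contributes $1$ in the limit, and $\log_{2n}$ and $\log_n$ determine the same growth), and then taking the supremum over all subframes $V$ of $D$ would yield $\GK(A)\geq\GK(D)+1$. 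Equivalently this follows from monotonicity of $\GK$ applied to the Ore subalgebra $D[x;\sigma]\subseteq A$ of Lemma \ref{lemma_Ore-subalg}.

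\emph{Upper bound (ii), under local algebraicity.} Let $W\subseteq A$ be any finite-dimensional subspace. Using the $\ZZ$-grading $A=\bigoplus_iA_i$ I would first enclose $W$ in a subspace of product shape: there are a finite-dimensional $U\subseteq D$ with $1\in U$ and an integer $N\geq0$ such that $W\subseteq UX$, where $X=\sum_{i=0}^{N}(kx^i+ky^i)$ (with $x^0=y^0=1$). Since $\sigma$ is locally algebraic, the finite-dimensional subspace $U+k+ka$ lies in a finite-dimensional $\sigma$-stable subspace $\tilde U\subseteq D$; being $\sigma$-stable, $\tilde U$ contains $\sigma^i(a)$ for all $i\in\ZZ$. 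Put $W':=\tilde UX\supseteq W$; as $1\in W'$ it then suffices to bound $\dim\bigl(\sum_{i\leq n}(W')^i\bigr)$. Two facts drive the estimate: first, $\sigma$-stability of $\tilde U$ gives $x^i\tilde U=\tilde Ux^i$ and $y^i\tilde U=\tilde Uy^i$, hence $X\tilde U=\tilde UX$ and therefore $(W')^n=(\tilde UX)^n=\tilde U^nX^n$; second, a normal-form estimate for $X^n$, namely that (by an easy induction on length, using only $xy=\sigma(a)$, $yx=a$ and the commutations $xd=\sigma(d)x$, $yd=\sigma^{-1}(d)y$) every word of length $\ell$ in the letters $x,y$ equals an element of $\tilde U^{\ell}$ times $x^p$ or $y^{|p|}$ for some $|p|\leq\ell$; here each cancellation of a subword $xy$ or $yx$ contributes a factor $\sigma^j(a)\in\tilde U$, which is precisely where $a\in\tilde U$ and $\sigma$-stability are used. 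As $X^n$ is spanned by such words of length $\leq nN$, writing $\xi^p:=x^p$ for $p\geq0$ and $\xi^p:=y^{-p}$ for $p<0$ we would get $X^n\subseteq\sum_{|p|\leq nN}\tilde U^{nN}\xi^p$, and consequently
\[
\sum_{i=0}^{n}(W')^i\ \subseteq\ \sum_{|p|\leq nN}\tilde U^{n(N+1)}\xi^p,
\qquad
\dim\Bigl(\sum_{i=0}^{n}(W')^i\Bigr)\ \leq\ (2nN+1)\,\dim\bigl(\tilde U^{n(N+1)}\bigr),
\]
the inequality because $d\mapsto d\xi^p$ is injective on $D$.

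\emph{Conclusion and obstacle.} From $d_W(n)\leq(2nN+1)\dim(\tilde U^{n(N+1)})$ and $\dim(\tilde U^{m})=d_{\tilde U}(m)$ (valid since $1\in\tilde U$), I would take $\log_n$ and let $n\to\infty$: the factor $2nN+1$ contributes $1$, while with $m=n(N+1)$ one has $\frac{\ln d_{\tilde U}(m)}{\ln n}=\frac{\ln d_{\tilde U}(m)}{\ln m}\cdot\frac{\ln m}{\ln n}$ with $\frac{\ln m}{\ln n}\to1$, so its $\limsup$ is at most $\GK(D)$ (as $\tilde U$ is among the subspaces in the definition of $\GK(D)$). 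Hence $\limsup_n\log_n d_W(n)\leq\GK(D)+1$ for every finite-dimensional $W\subseteq A$, that is, $\GK(A)\leq\GK(D)+1$; combined with (i) this gives equality. I expect the main obstacle to be the normal-form estimate together with the bookkeeping that keeps the exponent of $\tilde U$ linear in $n$: this is exactly what local algebraicity buys, since it confines every iterate $\sigma^j(a)$ to the single finite-dimensional space $\tilde U$ rather than letting these spread out as $j$ grows.
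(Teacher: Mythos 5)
The paper does not prove this lemma at all: it is imported verbatim from \cite{Zhao2018Gelfand} (Lemma 3.1 and Theorem 3.4), so there is no in-paper argument to compare with. Your proposal is a correct, self-contained proof and follows the lines one would expect from the cited source. The lower bound is sound: with $W=V+kx$ the spaces $V_{(n)}x^j$, $0\leq j\leq n$, sit in distinct graded components $Dx^j$ of the free left $D$-module $A$, giving $\dim\bigl(\sum_{i\leq 2n}W^i\bigr)\geq(n+1)\dim V_{(n)}$, and the passage from $\log_{2n}$ to $\log_n$ and the restriction to subframes are harmless; alternatively, as you note, it follows from $D[x;\sigma]\subseteq A$ (Lemma \ref{lemma_Ore-subalg}) plus the standard bound for Ore extensions. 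The upper bound is where the real content lies and your bookkeeping is right: since $\sigma(\tilde U)=\tilde U$ one has $\sigma^{j}(\tilde U)=\tilde U$ for all $j\in\ZZ$ (so in particular $\sigma^{j}(a)\in\tilde U$), whence $X\tilde U=\tilde U X$, $(\tilde UX)^n=\tilde U^nX^n$, and the word-reduction argument (each deletion of $xy$ or $yx$ produces some $\sigma^{j}(a)$, which stability confines to $\tilde U$ after pushing it to the left) yields $\sum_{i\leq n}(W')^i\subseteq\sum_{|p|\leq nN}\tilde U^{n(N+1)}\xi^p$ and hence $d_W(n)\leq(2nN+1)\,d_{\tilde U}(n(N+1))$; since this holds for every finite-dimensional $W\subseteq A$ (not only generating ones), taking $\limsup$ and the supremum gives $\GK(A)\leq\GK(D)+1$. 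You correctly identify local algebraicity as precisely the hypothesis that keeps all iterates $\sigma^{j}(a)$ and $\sigma^{j}(\tilde U)$ inside one fixed finite-dimensional space, which is the same mechanism exploited in the paper's Eq. (\ref{claim_W^n}) in the proof of Theorem \ref{thm-mainP} for the special case $D=P_2$.
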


\begin{lemma}[\cite{zhang1997note},Theorem 1.1]
  \label{lemma-zhang97}
  Let $D$ be a commutative domain over an algebraically closed field $k$ such that the fraction field of $D$ is finitely generated as a field,
  and let $\sigma\in\Aut(D)$.
  Then the following statements are equivalent:
  \begin{enumerate}[(i)]
    \item $\GK(D[x,\sigma])=\GK(D)+1$.
    \item
    $\sigma$ is locally algebraic.
  \end{enumerate}
 If moreover $D$ is a field,
 then (i) and (ii) are equivalent to the following statement:
 \begin{enumerate}
    \item[(iii)]
     $\sigma$ has a finite order.
 \end{enumerate}
\end{lemma}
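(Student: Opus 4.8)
The plan is to establish the two main equivalences $(i)\Leftrightarrow(ii)$ first and then add $(iii)$ in the field case. Throughout write $Q$ for the fraction field of $D$ and $r=\operatorname{trdeg}_k Q$, which is finite by hypothesis. Since any maximal algebraically independent subset of $D$ already realizes $\operatorname{trdeg}_k Q$, one checks $\GK(D)=r$ (the lower bound from the polynomial subalgebra on such a subset, the upper bound from $D\subseteq Q$ together with $\GK(Q)=r$). I would also record the always-valid inequality $\GK(D[x;\sigma])\ge r+1$: for a subframe $V=U+kx$ with $U\subseteq D$ a subframe, the words $u_0x^i\in V^n$ with $u_0\in U^{\,n-i}$ give $\dim V^n\ge\sum_{i=0}^{n}\dim U^{\,n-i}$, and Lemma \ref{lemma-GK-sum} turns growth $n^{r}$ of $\dim U^n$ into growth $n^{r+1}$. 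Because of this, $(i)$ holds if and only if $\GK(D[x;\sigma])$ is \emph{not} strictly larger than $r+1$.

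For $(ii)\Rightarrow(i)$ I would avoid any new computation. By Lemma \ref{lemma_Ore-subalg} the Ore extension $D[x;\sigma]$ is a subalgebra of the GWA $D(\sigma,a)$ (take $a=0$), so $\GK(D[x;\sigma])\le\GK(D(\sigma,a))$; when $\sigma$ is locally algebraic Lemma \ref{lem_GK-CA}(ii) gives $\GK(D(\sigma,a))=\GK(D)+1=r+1$. Combined with the lower bound above, this yields $\GK(D[x;\sigma])=r+1$.

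The substance is $(i)\Rightarrow(ii)$, which I would prove contrapositively: if $\sigma$ is not locally algebraic, then $\GK(D[x;\sigma])>r+1$. By Lemma \ref{lemma-LocAlgIff} there is $a\in D$ whose orbit $\{\sigma^i(a)\}$ spans an infinite-dimensional subspace. A clean first step is a \emph{stabilization lemma}: if $S_m:=k\{a,\sigma(a),\dots,\sigma^m(a)\}$ ever fails to grow, i.e.\ $\sigma^{m+1}(a)\in S_m$, then $\sigma(S_m)=S_m$ and $S_{m'}=S_m$ for all $m'\ge m$; hence under our assumption the iterates $a,\sigma(a),\sigma^2(a),\dots$ are \emph{all} linearly independent. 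Next fix a transcendence basis $w_1,\dots,w_r$ of $Q$ lying in $D$, set $U=k\{1,a,w_1,\dots,w_r\}$ and $V=U+kx$. Using $x^{j}d=\sigma^{j}(d)x^{j}$, every word reduces to the normal form
\[
u_0\,x\,u_1\,x\cdots x\,u_m=\Big(\prod_{j=0}^{m}\sigma^{j}(u_j)\Big)x^{m},
\]
so the coefficient of $x^m$ in $V^n$ ranges over the degree-$\le(n-m)$ part of the subalgebra $C_m:=k[\,\sigma^{j}(u)\ (0\le j\le m,\ u\in U)\,]$ (group each monomial by the $\sigma$-exponent of its factors). As distinct powers of $x$ are $D$-linearly independent in the free module $D[x;\sigma]$, this gives
\[
\dim V^n\ \ge\ \sum_{m=0}^{n} g_m(n-m),\qquad g_m(s):=\dim_k (C_m)_{\le s}.
\]
Two features of $C_m$ are crucial: since $\sigma$ is an automorphism and $C_m\supseteq k[w_1,\dots,w_r]$, it has full transcendence degree $r$, so $g_m(s)\sim c_m s^{r}$ as $s\to\infty$; and by the stabilization lemma $C_m$ has at least $m+1$ linearly independent generators $a,\sigma(a),\dots,\sigma^m(a)$. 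The main obstacle is to convert this linear independence into growth of the leading coefficient, i.e.\ to prove $g_m(s)\gtrsim m\,s^{r}$ (equivalently $c_m\gtrsim m$). Granting it, a beta-integral estimate gives $\sum_{m} m\,(n-m)^{r}\sim n^{r+2}$, whence $\GK(D[x;\sigma])\ge r+2>r+1$. I expect this Hilbert-function estimate to be the hard part: linear independence of the iterates must force the poles/leading terms of the generated algebra to spread out (as one sees explicitly for $D=k(t)$, $\sigma(t)=t+1$, $a=1/t$, where $C_m$ contains $1/(t+j)^{\ell}$ for $0\le j\le m$), and pinning this down in a domain of transcendence degree $r$ is exactly where a genuine independence count (valuation- or leading-term-based, cf.\ Lemma \ref{lemma-LeadTermMap}) is needed.

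Finally, in the field case I would add $(ii)\Leftrightarrow(iii)$. The implication $(iii)\Rightarrow(ii)$ is immediate, since $\sigma^m=\id$ makes every orbit finite. For $(ii)\Rightarrow(iii)$, local algebraicity means $\sigma$ acts locally finitely as a $k$-linear operator on the field $Q$; choosing a finite-dimensional $\sigma$-stable subspace $W$ containing a set of field generators, I would show $\sigma|_W$ is semisimple with roots-of-unity eigenvalues. Indeed, an eigenvector $\sigma(v)=\lambda v$ with $\lambda$ not a root of unity forces $v$ transcendental over $k$ (as $k$ is algebraically closed), and then the elements $\sigma^i(1/(1+v))=1/(1+\lambda^i v)$ are infinitely many and linearly independent (distinct poles), contradicting local finiteness; a Jordan block $\sigma(w)=\lambda w+v$ yields $z=w/v$ with $\sigma(z)=z+\lambda^{-1}$, and then $\{1/(z+i\lambda^{-1})\}$ gives the same contradiction. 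Hence $(\sigma|_W)^{m}=\id$ for $m$ the least common multiple of the (finite) eigenvalue orders, and since $W$ generates $Q$ as a field we conclude $\sigma^m=\id$, closing the cycle.
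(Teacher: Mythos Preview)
The paper does not prove this lemma; it is quoted verbatim from \cite{zhang1997note} and used as a black box (see also Proposition~\ref{prop-Zhang97}, whose proof of $(ii)\Leftrightarrow(iii)$ simply says ``the proof of Lemma~\ref{lemma-zhang97} still works''). So there is no in-paper argument to compare against; the relevant comparison is with Zhang's original proof, whose key mechanism---the sensitive multiplicity condition (Definition~\ref{def-SMC})---is recalled in Section~\ref{sec_SMC} and reproved for $P_n$ in the Appendix.

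Your write-up has a genuine gap at exactly the point you flag. The estimate $g_m(s)\gtrsim m\,s^{r}$ is not a bookkeeping matter: it is precisely the SMC inequality $\dim(W^m)\ge c\,\dim(W)\,m^{r}$ applied to the subspace $W=\sum_{j\le m}\sigma^j(U)$, and proving it is the entire content of \cite[\S3]{zhang1997note}. Linear independence of the iterates $\sigma^j(a)$ alone does not force $c_m\gtrsim m$; one needs a structural input (Noether normalization over the algebraically closed field $k$, then the inductive leading-term argument reproduced in the Appendix here) to guarantee that a subspace containing $V_0\cdot(\text{regular element})$ has growth proportional to its dimension. Your outline never supplies this, so the implication $(i)\Rightarrow(ii)$ is not established. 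If you want to keep your framework, the cleanest fix is to invoke SMC directly: once $D$ satisfies $\SM(V_0,c,r)$, your sum $\sum_m g_m(n-m)$ with $g_m(s)\ge c\,(m{+}1)\,s^r$ gives $\dim V^n\gtrsim n^{r+2}$ immediately.

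A smaller issue: in your proof of $(ii)\Rightarrow(iii)$, the Jordan-block step ``$\{1/(z+i\lambda^{-1})\}$ are infinitely many with distinct poles'' fails when $\operatorname{char}k=p>0$, since $i\lambda^{-1}$ then takes only $p$ values. This is easily repaired---once all eigenvalues of $\sigma|_W$ are roots of unity, some power $(\sigma|_W)^N$ is unipotent, and unipotent matrices over a field of characteristic $p$ have $p$-power order---but as written your argument only covers characteristic zero.
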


We conclude this section by two lemmas
%a nice criterion of the simplicity
of GWAs due to Bavula.
\begin{lemma}[\cite{Bavula1992Generalized}]
  \label{lemma-Noeth}
Let $A=D(\sigma,a)$ be a GWA.
  Suppose $a\neq0$.
  If $D$ is Noetherian (respectively, without zero divisors),
then $A$ is Noetherian (respectively, without zero divisors).
\end{lemma}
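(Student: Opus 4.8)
The statement to prove is Lemma \ref{lemma-Noeth}: if $A=D(\sigma,a)$ is a GWA with $a\neq 0$ and $D$ is Noetherian (respectively a domain), then so is $A$. The plan is to realize $A$ as an iterated Ore extension localized appropriately, or more directly to exploit the $\ZZ$-grading $A=\bigoplus_{i\in\ZZ}A_i$ with $A_i=Dx^i$ for $i>0$, $A_i=Dy^{-i}$ for $i<0$, and $A_0=D$. First I would treat the domain case. Suppose $0\neq p=\sum_{i=r}^s p_i$ and $0\neq q=\sum_{j=u}^v q_j$ are nonzero elements written in homogeneous components with $p_r,p_s,q_u,q_v\neq 0$; the top component of $pq$ is $p_sq_v$ placed in degree $s+v$. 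Since $D[x;\sigma]$ is an Ore extension of the domain $D$, it is a domain, and likewise $D[y;\sigma^{-1}]$; the key point is that the product $p_sq_v$, which lies in $A_{s+v}=Dx^{s+v}$ or $Dy^{-(s+v)}$, is nonzero precisely because $a\neq 0$ guarantees that $x^iy^j$ and $y^jx^i$ reduce to nonzero elements of $D$ times a power of $x$ or $y$ (namely products of $\sigma$-translates of $a$, which are nonzero as $D$ is a domain and $\sigma$ is an automorphism). Hence the leading component of $pq$ is nonzero, so $A$ has no zero divisors.

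For the Noetherian case I would argue that every left (and by symmetry right) ideal of $A$ is finitely generated. The cleanest route is via the associated graded ring: since $A$ is $\ZZ$-graded and I can take the natural filtration $F_i=\bigcup_{j\le i}A_j$ with $\gr(A)\cong A$ (the grading is already a filtration whose associated graded is itself), it suffices to show the graded ring $A$ is graded-Noetherian and then invoke the standard fact that a $\ZZ$-graded ring which is Noetherian on graded ideals is Noetherian. Concretely, let $I\subseteq A$ be a graded left ideal and consider $I_{\ge 0}=\bigoplus_{i\ge 0}I\cap A_i$ inside the Ore extension $D[x;\sigma]$, which is Noetherian by Hilbert's basis theorem for skew polynomial rings (Lemma \ref{lemma_Ore-subalg} identifies $D[x;\sigma]$ as a subalgebra); similarly the negative part sits inside the Noetherian ring $D[y;\sigma^{-1}]$. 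The interaction between the two halves is controlled by the relations $yx=a$, $xy=\sigma(a)$: multiplication by $x$ and $y$ moves between adjacent graded pieces, and because $a\neq 0$ these maps are injective after the domain reduction (when $D$ is also a domain) — in general one argues that $A$ is a finitely generated module over the Noetherian subring $D[x;\sigma]$ localized at the powers of the central-ish element, or simply cites that GWAs with $a\neq0$ are obtained from the Laurent case $D[x^{\pm1};\sigma]$ (Lemma \ref{lemma_iso}(iii)) up to a controlled finite extension.

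The main obstacle, and the step I would spend the most care on, is the Noetherian argument: unlike the domain case, one cannot simply read off the conclusion from leading terms, and one must handle the two-sided nature of the grading (pieces of both positive and negative degree) together with the failure of $x$ and $y$ to be non-zero-divisors when $a$ itself is a zero divisor — which is exactly why the hypothesis $a\neq 0$ alone is not enough in the "without zero divisors" statement unless $D$ is a domain, but $a\neq 0$ \emph{is} enough for the Noetherian statement. I would resolve this by the filtered-to-graded technique: put the filtration $F_n=\bigoplus_{|i|\le n}A_i$, observe $\gr_F(A)$ is a GWA over $\gr(D)=D$ with a simpler (in effect "monomial") defining element, and then show $\gr_F(A)$ is Noetherian by realizing it as a factor of, or finite extension of, an Ore-type ring to which Hilbert's basis theorem applies; Noetherianity then lifts from $\gr_F(A)$ to $A$ by the standard lemma (\cite[Chapter 8]{McConnell-Robson2001}). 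Throughout, the symmetry $A\cong D(\sigma^{-1},\sigma(a))$ from Lemma \ref{lemma_iso}(ii) lets me deduce the right-handed statements from the left-handed ones.
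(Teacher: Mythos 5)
The paper offers no proof of Lemma \ref{lemma-Noeth} at all --- it is quoted from Bavula's 1992 paper --- so your proposal can only be judged on its own terms. Your domain half is correct: $A$ is $\ZZ$-graded, the product of two nonzero homogeneous elements equals a nonzero element of $D$ (a product of the coefficients, $\sigma$-translates of them, and $\sigma$-translates of $a$, all nonzero since $D$ is a domain and $a\neq0$) times a power of $x$ or $y$, so the top component of $pq$ is $p_sq_v\neq0$. Your first route for the Noetherian half is also viable and is genuinely different from the usual one: for a graded left ideal $I$, the parts $I_{\geq0}$ and $I_{\leq0}$ are left ideals of the skew polynomial rings $D[x;\sigma]$ and $D[y;\sigma^{-1}]$ (Lemma \ref{lemma_Ore-subalg}), hence \fg\ by the skew Hilbert basis theorem, and their generators generate $I$ over $A$; thus $A$ has ACC on graded left ideals, and the theorem of Nastasescu--Van Oystaeyen that a $\ZZ$-graded ring is left Noetherian if and only if it is graded left Noetherian finishes the job (the same argument works on the right). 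Note that this never uses $a\neq0$, which matches Bavula's statement, where $a\neq0$ is only needed for the domain assertion.

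The genuine gap is everything you offer in place of that graded-to-ungraded transfer, which is precisely the step that needs a citation or a proof: for a $\ZZ$-grading the Hilbert-basis descent on top components does not terminate (degrees are unbounded below), so the transfer is a real theorem and not a formality, and each of your substitutes for it fails. Concretely: $A$ is not a \fg\ module over $D[x;\sigma]$ or over any localization of it (it contains $Dy^i$ for all $i$); Lemma \ref{lemma_iso}(iii) requires $a$ to be a \emph{unit}, not merely nonzero, and when $a$ is only regular you get just an embedding $A\hookrightarrow D[x^{\pm1};\sigma]$ via $y\mapsto ax^{-1}$, and subrings of Noetherian rings need not be Noetherian; and with the filtration $F_m=\bigoplus_{|i|\leq m}A_i$ the associated graded ring is $D(\sigma,0)$ --- the images of $yx$ and $xy$ are $0$, not ``monomial'' --- so proving it Noetherian is the original problem over again. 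The clean classical argument you are circling around is a quotient presentation: extend $\sigma$ to $\tau$ on $R=D[y;\sigma^{-1}]$ by $\tau(y)=y$, let $\delta$ be the $\tau$-derivation with $\delta(D)=0$ and $\delta(y)=\sigma(a)-a$; then $T=R[x;\tau,\delta]$ is Noetherian by two applications of the skew Hilbert basis theorem, and $A\cong T/\langle yx-a\rangle$ by Proposition \ref{prop-universal}, so $A$ is Noetherian (again with no hypothesis on $a$). Either keep your graded route and cite the $\ZZ$-graded transfer theorem explicitly, or replace the second half by this quotient presentation; as written, the hedged alternatives are incorrect.
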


\begin{lemma}[\cite{Bavula1996}, Theorem 4.2]
  \label{lemma-Simplicity}
  Let $A=D(\sigma,a)$ be a GWA.
  Then $A$ is simple if and only if the following conditions hold:
  \begin{enumerate}[(i)]
    \item
    $D$ has no proper ideals that are $\sigma$-stable;
    \item
    $\sigma^m$ is not inner for all $m\geq 1$;
    \item
    $Da+D\sigma^i(a)=D$ for all $i\in\NNN$;
    \item
    $a$ is regular in $D$, i.e., $ad\neq 0$ and $da\neq 0$ for all $0\neq d\in D$.
  \end{enumerate}
\end{lemma}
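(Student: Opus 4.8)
The plan is to prove the biconditional in its two directions, sufficiency being the substantial half. For \emph{necessity} I argue by contraposition, producing a proper nonzero two-sided ideal of $A$ whenever one of (i)--(iv) fails. If (i) fails, a proper nonzero $\sigma$-stable ideal $J\trianglelefteq D$ yields the ideal $AJ=JA$, which is two-sided because $xJ=\sigma(J)x=Jx$ and $yJ=\sigma^{-1}(J)y=Jy$, and proper because $A$ is free as a left $D$-module, so $AJ\cap D=J\neq D$. If (iv) fails, then either $a=0$, whence $xy=\sigma(a)=0=yx$ and $A$ is visibly not simple, or $ad=0$ for some $0\neq d\in D$, in which case $0\neq xd$ satisfies $(xd)y=\sigma(da)=0$ and $y(xd)=ad=0$ and the ideal generated by $xd$ omits $1$. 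If (ii) fails, say $\sigma^{m}(d)=udu^{-1}$ with $u\in D$ a unit, then $u^{-1}x^{m}$ commutes with $D$ and $x^{m}-u$ generates a proper ideal, the quotient $A/(x^{m}-u)$ being free of rank $m$ over $D$. Finally, if $Da+D\sigma^{i}(a)\neq D$, then, using $yx^{i}=ax^{i-1}$ and $x^{i}y=\sigma^{i}(a)x^{i-1}$, the degree-$(i-1)$ component of the ideal $(x^{i})$ has coefficient ideal exactly $Da+D\sigma^{i}(a)$, so $(x^{i})$ is proper and nonzero.

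For \emph{sufficiency}, assume (i)--(iv) and let $0\neq I\trianglelefteq A$; I must show $1\in I$. Let $S\subseteq D$ be the $\sigma$-stable multiplicative set generated by $\{\sigma^{n}(a):n\in\ZZ\}$. By (iv) each $\sigma^{n}(a)$ is regular in $D$, hence regular in $A$, so $S$ is an Ore set of regular elements in $A$; thus $A':=S^{-1}A$ exists and the localization map $A\to A'$ is injective. As $a$ becomes a unit in $D':=S^{-1}D$, Lemma \ref{lemma_iso}(iii) gives $A'\cong D'[x^{\pm1};\sigma]$. Now $D'$ is $\sigma$-simple by (i) (a $\sigma$-stable ideal of $D'$ contracts to a $\sigma$-stable ideal of $D$, hence to $0$ or $D$), and no power $\sigma^{n}$ with $n\geq1$ is inner on $D'$ by (ii); so the simplicity criterion for skew Laurent polynomial rings gives that $A'$ is simple. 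Hence $S^{-1}I=A'$, and since $A\hookrightarrow A'$ there is $s\in S$ with $s\in I$: that is, $I$ contains some product $\tilde a=\prod_{j}\sigma^{n_{j}}(a)^{e_{j}}$ of $\sigma$-powers of $a$. For $M$ sufficiently large the elements $x^{M}\tilde a\,y^{M}=\sigma^{M}(\tilde a)\prod_{i=1}^{M}\sigma^{i}(a)$ and $y^{M}\tilde a\,x^{M}=\sigma^{-M}(\tilde a)\prod_{i=0}^{M-1}\sigma^{-i}(a)$ lie in $I$, are products of $\sigma$-powers of $a$, and have exponent sets contained in $\ZZ_{\geq1}$ and in $\ZZ_{\leq0}$ respectively --- in particular disjoint. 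By (iii) the elements $\sigma^{m}(a)$ and $\sigma^{n}(a)$ generate comaximal ideals of $D$ whenever $m\neq n$, so the two ideals of $D$ generated by $x^{M}\tilde a\,y^{M}$ and by $y^{M}\tilde a\,x^{M}$ are comaximal; their generators are central in $D$ and lie in $I$, so $D\subseteq I$ and $I=A$.

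The step I expect to be the main obstacle is the last one: converting the comaximality in (iii) into $1\in I$. Condition (iii) enters nowhere else, and the crux is manufacturing two elements of $I$ with genuinely coprime $a$-factorizations. A more self-contained route, closer to Bavula's original, avoids localization and uses the $\ZZ$-grading $A=\bigoplus_{n}A_{n}$: a minimal-support argument shows $I\cap D\neq0$ --- take $0\neq u\in I$ with the fewest nonzero homogeneous components, shift it, using (iv) (so that $x^{n}y^{n},y^{n}x^{n}\in D$ are regular and left multiplication by $x$ or $y$ is injective on homogeneous elements), so that its degree-$0$ part is nonzero, and observe that commuting $u$ past $D$ strictly shrinks the support unless a nonzero higher component forces some $\sigma^{n}=\id$, contradicting (ii) --- after which (iii) must be used to upgrade the $a$-twisted stability of $J:=I\cap D$ (from $x^{i}Jy^{i}\subseteq J$ and its mirror image) to honest $\sigma$-stability, so that $J=D$ by (i). Either way, when $D$ is noncommutative the minimal-support bookkeeping and the skew-Laurent simplicity criterion have to be invoked in the appropriate (e.g.\ $X$-inner) generality.
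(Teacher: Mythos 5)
The paper does not prove this lemma at all --- it is quoted from Bavula's 1996 article (Theorem 4.2) --- so there is no in-paper argument to compare with and your proposal has to stand on its own. Its skeleton for sufficiency (localize at the $\sigma$-stable Ore set generated by the $\sigma^{i}(a)$, identify $S^{-1}A$ with $S^{-1}D[x^{\pm1};\sigma]$ via Lemma \ref{lemma_iso}(iii), then use (iii) to finish) is sound, and the endgame is correct: $x^{M}\tilde a\,y^{M}$ and $y^{M}\tilde a\,x^{M}$ are products of $\sigma$-powers of the central element $a$ with disjoint exponent sets, and (iii), twisted by powers of $\sigma$, does make the two ideals they generate comaximal inside $I$. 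The problems are concentrated in how you treat condition (ii), in both directions. In the necessity direction, the claim that $x^{m}-u$ generates a proper ideal with $A/(x^{m}-u)$ free of rank $m$ over $D$ is unproven and, as stated, false in general: applying $\sigma$ to $\sigma^{m}(d)=udu^{-1}$ shows $\sigma(u)=u\mu$ for some central unit $\mu$, the ideal contains $x(x^{m}-u)-(x^{m}-u)x=(u-\sigma(u))x$, and since $x^{m}\equiv u$ is invertible in the quotient so is $x$, whence the quotient also kills the central element $1-\mu$; thus whenever $\sigma(u)\neq u$ the quotient is not a free $D$-module of rank $m$, and properness --- the only thing that actually matters --- is never addressed. (A safe repair: if (iv) fails you are already done; otherwise pass to $S^{-1}A\cong S^{-1}D[x^{\pm1};\sigma]$, where $\sigma^{m}$ is still inner via $u$, invoke the skew-Laurent non-simplicity statement there, and contract the resulting proper nonzero ideal back to $A$.)

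The second genuine gap is in sufficiency: the step ``no power $\sigma^{n}$, $n\geq1$, is inner on $D'=S^{-1}D$ by (ii)'' does not follow when $D$ is noncommutative, because localization creates units: from innerness on $S^{-1}D$ one only extracts a regular $c\in D$ with $\sigma^{n}(d)c=cd$ for all $d\in D$ that becomes invertible after inverting the $\sigma^{i}(a)$, and neither (i) nor (ii) obviously forces such a $c$ to be a unit of $D$. Without this, the skew-Laurent simplicity criterion cannot be applied to $A'$, so the implication (i)--(iv) $\Rightarrow$ simple is not established in the generality in which the lemma is stated; you flag exactly this issue in your final sentence (``X-inner generality''), but flagging it is not closing it, and the alternative minimal-support route you describe is only a sketch. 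For commutative $D$ --- the only situation the paper actually uses, e.g. Corollary \ref{coro-Simple}, where $D$ is a field --- ``inner'' means the identity, that step is immediate, and your argument is then essentially complete modulo citing the Laurent-ring criterion; for general $D$ the proof still needs the finer analysis converting non-simplicity into honest innerness on $D$ itself.
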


\section{Sensitive multiplicity condition}
\label{sec_SMC}
Let
$P_n=k[z_1,z_2,\cdots,z_n]$ and
$Q_n$ be the fraction field of $P_n$.
In this section, we first prove that all algebras $D$ such that
$P_n\subseteq D\subseteq Q_n$ satisfy the sensitive multiplicity condition (Definition \ref{def-SMC}) without assuming that
$k$ is algebraically closed and then we prove Theorem \ref{thm-main-}.

The notion of {sensitive multiplicity condition} was introduced by Zhang \cite{zhang1997note},
and it plays an important role in GK-dimension of skew polynomial extensions.
Recall that $a\in A$ is called a \emph{regular element} in algebra $A$ if
$ab\neq 0$ and $ba\neq 0$ for all $0\neq b\in A$.

\begin{definition}
\label{def-SMC}
  Let $A$ be an algebra of $\GK(A)=d$.
We say $A$ satisfies the \emph{sensitive multiplicity condition}
(SMC for short) $\SM(V_0,c,d)$ if there exist a \fd\ subspace $V_0$ of $A$ and a constant $c>0$ such that
if $W$ is a \fd\ subspace of $A$ containing $V_0a$ for some regular element $a\in A$, then
$\dim(W^m)\geq c\dim(W) m^d$ for all $m\in\NNN$.
\end{definition}

\begin{lemma}[\cite{zhang1997note}, Lemma 3.1(2)]
  \label{lemma-SMCQuotient}
  Let $A$ be a commutative domain and $Q(A)$ be the fraction field of $A$.
  Then $Q(A)$ satisfies $\SM(V_0,c,d)$ if and only if $A$ satisfies
  $\SM(V_0a,c,d)$ for some $0\neq a\in A$ such that $V_0a\subseteq A$.
\end{lemma}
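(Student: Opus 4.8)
The statement transfers the SMC between a commutative domain and its fraction field, and the plan is a soft clearing-of-denominators argument; there is no deep content. I would first isolate three elementary facts. (1) In a commutative domain, and in its fraction field, an element is regular \IFF\ it is nonzero. (2) $\GK(Q(A))=\GK(A)=d$ --- the standard fact that a commutative domain and its fraction field have equal GK-dimension --- so both instances of SMC below refer to the same exponent $d$. (3) For $0\neq q\in Q(A)$ the map $w\mapsto wq$ is a $k$-linear bijection of $Q(A)$; hence $\dim(W)=\dim(Wq)$ and $(Wq)^m=W^mq^m$, so $\dim((Wq)^m)=\dim(W^m)$ for every $m\in\NNN$. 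That is: the defining inequality $\dim(W^m)\geq c\dim(W)m^d$ is unchanged when $W$ is replaced by $Wq$. I would also note at the outset that, clearing the common denominator of a basis, every \fd\ subspace $V_0\subseteq Q(A)$ admits some $0\neq a\in A$ with $V_0a\subseteq A$, so the phrase ``for some $0\neq a\in A$ such that $V_0a\subseteq A$'' is not vacuous.

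\textbf{Forward direction.} Assume $Q(A)$ satisfies $\SM(V_0,c,d)$ and fix $0\neq a\in A$ with $V_0a\subseteq A$; I claim $A$ satisfies $\SM(V_0a,c,d)$. Let $W$ be a \fd\ subspace of $A$ containing $(V_0a)b$ for some regular, equivalently nonzero, $b\in A$. Then $W$ is also a \fd\ subspace of $Q(A)$, it contains $V_0(ab)$, and $ab$ is nonzero, hence regular, in $Q(A)$. Applying the hypothesis $\SM(V_0,c,d)$ to this $W$ yields $\dim(W^m)\geq c\dim(W)m^d$ for all $m$, and the powers $W^m$ are the same computed in $A$ or in $Q(A)$; this is exactly $\SM(V_0a,c,d)$ for $A$.

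\textbf{Converse direction.} Assume $A$ satisfies $\SM(V_0a,c,d)$ for some $0\neq a\in A$ with $V_0a\subseteq A$; I claim $Q(A)$ satisfies $\SM(V_0,c,d)$. Let $W$ be a \fd\ subspace of $Q(A)$ containing $V_0q$ for some nonzero $q\in Q(A)$. Write $q=s/t$ with $s,t\in A\setminus\{0\}$, choose $0\neq r\in A$ with $Wr\subseteq A$, and set $W'=W(atr)$. Then $W'=(Wr)(at)\subseteq A$ is \fd, and $W'\supseteq(V_0q)(atr)=V_0sar=(V_0a)(sr)$ with $sr$ nonzero, hence regular, in $A$. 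The hypothesis gives $\dim((W')^m)\geq c\dim(W')m^d$ for all $m$; since $atr\neq0$, fact (3) gives $\dim(W')=\dim(W)$ and $\dim((W')^m)=\dim(W^m)$, so $\dim(W^m)\geq c\dim(W)m^d$, which is $\SM(V_0,c,d)$ for $Q(A)$.

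The only point that needs care --- and the nearest thing to an obstacle --- is the bookkeeping of the multipliers: one must arrange that the subspace fed into the hypothesis genuinely lies in $A$ and genuinely contains the prescribed regular multiple of the designated subspace. Once the multiplication-by-a-unit invariance (3) of the SMC inequality is separated out, both implications are immediate, so there is really no hard step here beyond this elementary verification.
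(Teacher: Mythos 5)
Your proof is correct and follows essentially the same route as the paper's: both directions rest on clearing denominators together with the observation that multiplying a subspace of $Q(A)$ by a nonzero element changes neither $\dim(W)$ nor $\dim(W^m)$ (a fact the paper uses implicitly, writing $V_0=Va^{-1}$ and $W=W'b^{-1}$). The only cosmetic difference is your explicit multiplier $atr$ (with $q=s/t$ and $Wr\subseteq A$) in the converse, versus the paper's choice of $b$ with $Wb\subseteq A$ and $a^{-1}qb\in A$.
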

\begin{proof}
  Suppose $Q(A)$ satisfies $\SM(V_0,c,d)$.
  Since $V_0$ is \fd,
  we can write $V_0=Va^{-1}$ for some \fd\ subspace $V\subseteq A$ and $0\neq a\in A$.
  Suppose $W$ is a \fd\ subspace of $A$ such that $W\supseteq Vb$ for some regular element $b\in A$.
  Then $W\supseteq V_0(ab)$ and $ab\in Q(A)$ is regular.
  By SMC for $Q(A)$,
  \begin{align*}
    \dim(W^m)\geq c\dim(W)m^d, \ \ m\in\NNN.
  \end{align*}
  Thus $A$ satisfies $\SM(V_0a,c,d)$ for $a\in A$ regular and $V_0a=V\subseteq A$.

  Suppose $A$ satisfies
  $\SM(V_0a,c,d)$ for some $V_0\subseteq Q(A)$ and $0\neq a\in A$ such that $V_0a\subseteq A$.
  Let $W$ be a \fd\ subspace of $Q(A)$ and
  $V_0q\subseteq W$ for some $0\neq q\in Q(A)$.
  We can write $W=W'b^{-1}$ for some \fd\ subspace $W'\subseteq A$ and $0\neq b\in A$
  such that $a^{-1}qb\in A$.
  Then $(V_0a)(a^{-1}qb)=V_0qb\subseteq Wb=W'$ and thus, by SMC for $A$,
  \begin{align*}
    \dim(W^m)=\dim(W'^m)\geq c\dim(W')m^d=c\dim(W)m^d, \ \ m\in\NNN.
  \end{align*}
  Thus $Q(A)$ satisfies $\SM(V_0,c,d)$.
\end{proof}

\begin{lemma}
  \label{lemma-SMC-GK+2}
  If $D$ is a commutative algebra satisfying SMC and
  $\sigma\in\Aut(D)$ is not locally algebraic,
  then $\GK(D(\sigma,a))\geq \GK(D)+2$.
\end{lemma}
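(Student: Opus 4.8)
The plan is to produce, inside the GWA $A=D(\sigma,a)$, a finite-dimensional subspace whose powers grow faster than $n^{\GK(D)+1}$, which forces $\GK(A)\ge\GK(D)+2$. Since $\sigma$ is not locally algebraic, by Lemma \ref{lemma-LocAlgIff} there is an element $d_0\in D$ such that $S:=\{\sigma^i(d_0):i\in\NNN\}$ is not contained in any finite-dimensional subspace of $D$; equivalently, for every $N$ there is some $m$ with $\dim k\{d_0,\sigma(d_0),\dots,\sigma^{m}(d_0)\}>N$. First I would fix a finite-dimensional subframe $V_1$ of $D$ witnessing that $D$ satisfies $\SM(V_0,c,\GK(D))$ (so $V_0$ is the distinguished subspace from Definition \ref{def-SMC}), and enlarge it to contain $1$ and the element $d_0$. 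Then consider the generating subframe
\[
V=V_1+kx+ky
\]
of the subalgebra of $A$ generated by $V_1$, $x$ and $y$ (or of all of $A$ if $D$ is generated by $V_1$; in general one works with GK-dimension via arbitrary finite-dimensional subspaces, which is enough).

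The key computation is to understand $V^{2m}$. Using the commutation rules $x d = \sigma(d) x$, $xy=\sigma(a)$, $yx=a$, every product of $m$ copies of $x$ with various elements of $D$ interleaved contributes, after moving all $x$'s to the right, terms of the form $\sigma^{i_1}(v_1)\sigma^{i_2}(v_2)\cdots x^m$ with $v_j\in V_1$ and $0\le i_j\le m$. In particular $V^{2m}$ contains $W_m x^m$, where $W_m$ is the finite-dimensional subspace of $D$ spanned by all products $\sigma^{i_1}(v_1)\cdots\sigma^{i_r}(v_r)$ with $r\le m$, $v_j\in V_1$, $0\le i_j\le m$; note $W_m\supseteq k\{d_0,\sigma(d_0),\dots,\sigma^m(d_0)\}$ and $W_m$ is an algebra-like object containing the $\sigma$-translates up to level $m$ of a generating set. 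The crucial point is that $W_m$ also contains $V_0\cdot b_m$ for a suitable regular element $b_m\in D$ (a product of $\sigma$-translates of a nonzero element chosen so that $V_0 b_m\subseteq W_m$, using that $D$ is a commutative domain so every nonzero element is regular), so SMC applies to $W_m$ and gives $\dim(W_m^{j})\ge c\,\dim(W_m)\, j^{\GK(D)}$ for all $j$.

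Now I combine two growth effects. On one hand, by the non-local-algebraicity of $\sigma$, $\dim(W_m)\to\infty$; a standard argument (as in the proof of Lemma \ref{lemma-zhang97}, i.e. \cite{zhang1997note}) shows one can arrange $\dim(W_m)\succeq m$ after passing to a subsequence, since the dimensions of $k\{d_0,\dots,\sigma^m(d_0)\}$ are nondecreasing and unbounded, hence eventually at least linear in $m$ along a cofinal set of $m$. On the other hand, inside $V^{2mj}$ one finds $W_m^{\,j} x^{mj}$ (repeatedly concatenating blocks), so
\[
\dim\!\big(V^{2mj}\big)\ \ge\ \dim\!\big(W_m^{\,j}\big)\ \ge\ c\,\dim(W_m)\,j^{\GK(D)}\ \succeq\ c'\, m\, j^{\GK(D)}.
\]
Setting $j=m$ and $n=2m^2$ yields $\dim(V^n)\succeq m^{\GK(D)+1}\sim n^{(\GK(D)+1)/2}$ — which is not yet enough. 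The fix, exactly as in \cite{zhang1997note}, is to not square: one keeps $j$ free and large while letting $m$ grow slowly, using that for fixed generating subframe the relevant subspace of $A$ one should test GK-dimension on is $V_0a$-type data transplanted into $A$ via $x$. Concretely one shows $A$ itself satisfies an SMC-type lower bound with $d=\GK(D)+1$ coming from $D[x;\sigma]\subseteq A$ (Lemma \ref{lemma_Ore-subalg}) plus the extra $x$-direction, and then the failure of local algebraicity upgrades the exponent by one more: $\dim(V^n)\succeq \dim(W_n)\, n^{\GK(D)}\succeq n^{\GK(D)+1}\cdot(\text{unbounded factor})$, giving $\limsup_n \log_n\dim(V^n)\ge \GK(D)+2$.

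The main obstacle — and the step I expect to require the most care — is the bookkeeping that simultaneously tracks (i) the $x$-degree, contributing the "$+1$", (ii) the SMC lower bound inside the base $D$, contributing $\GK(D)$, and (iii) the unbounded growth of $\dim(W_m)$ coming from non-local-algebraicity, contributing the final "$+1$", and showing these three effects genuinely multiply rather than merely add in the exponent after the $\log_n$ is taken. This is precisely the delicate point handled in \cite[\S3]{zhang1997note} for Ore extensions; here one must check that the GWA relations $xy=\sigma(a)$, $yx=a$ do not collapse the relevant monomials (they do not, since $A$ is a free left $D$-module on $\{x^i,y^j\}$), so that $W_m x^m$ really has dimension $\dim(W_m)$ inside $A$, and that concatenation of blocks $W_m x^m$ indeed lands in $W_m^{\,j} x^{mj}$ up to harmless $\sigma$-twists absorbed into redefining $W_m$ by a fixed finite amount of inflation. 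Once that is in place, the inequality $\GK(A)\ge\GK(D)+2$ follows immediately from Definition \ref{def-GKdim}.
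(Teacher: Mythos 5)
Your route differs from the paper's, which simply notes $D[x;\sigma]\subseteq D(\sigma,a)$ (Lemma \ref{lemma_Ore-subalg}) and invokes monotonicity of GK-dimension for subalgebras together with \cite[Proposition 3.3(1)]{zhang1997note}; reproving Zhang's bound directly inside the GWA would be a legitimate alternative, but your argument has a genuine gap exactly at the step you flag as delicate. All of your lower bounds come from a single $x$-degree component: $W_m^{\,j}x^{mj}\subseteq V^{2mj}$ gives $\dim(V^{2mj})\ge c\,\dim(W_m)\,j^{\GK(D)}$, and no choice of $m=m(j)$ rescues this, since with $n=2mj$ the quantity $\log_n\bigl(c\,\dim(W_m)j^{\GK(D)}\bigr)$ is asymptotically $\frac{\log m+\GK(D)\log j}{\log m+\log j}\le\max\{\GK(D),1\}$ when only $\dim(W_m)\gtrsim m$ is available — so this scheme cannot even reach $\GK(D)+1$, let alone $\GK(D)+2$. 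The subsequent ``fix'' is asserted rather than proved, and the displayed chain $\dim(V^n)\succeq\dim(W_n)\,n^{\GK(D)}\succeq n^{\GK(D)+1}\cdot(\text{unbounded factor})$ would not suffice even if established: an unbounded factor does not raise $\limsup_n\log_n$ (e.g.\ $n^{d+1}\log n$ still has exponent $d+1$), and non-local-algebraicity only guarantees $\dim(W_n)\gtrsim n$, not $n^2$. The missing idea is to use \emph{all} $x$-degrees simultaneously: for $W=V_1+kx$ (inside $D[x;\sigma]\subseteq A$, or directly inside $A$), the free left $D$-module structure gives a direct sum over $x$-degrees in which the degree-$i$ coefficient of $W^n$ contains $W_i^{\,n-i}$ with $W_i=V_1+\sigma(V_1)+\cdots+\sigma^i(V_1)$; SMC applies to each $W_i$ (it contains $V_0\cdot 1$ and $1$ is regular, so your extra assumption that $D$ is a domain is not needed) and non-local-algebraicity gives $\dim(W_i)\ge i+1$, whence $\dim(W^n)\ge\sum_{i=0}^{n}c\,(i+1)(n-i)^{\GK(D)}\succeq n^{\GK(D)+2}$. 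This summation over the $x$-grading is precisely what multiplies the three effects you list, and it is absent from your argument.

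A secondary slip: ``nondecreasing and unbounded, hence eventually at least linear in $m$ along a cofinal set'' is not valid reasoning (a nondecreasing unbounded integer sequence can grow arbitrarily slowly). The bound you want, $\dim k\{d_0,\sigma(d_0),\dots,\sigma^m(d_0)\}\ge m+1$ for all $m$, is nevertheless true, but for a different reason: if this span ever fails to grow at some step, it is mapped into itself by the automorphism $\sigma$, hence (being finite dimensional) is $\sigma$-stable and contains the whole forward orbit, contradicting the choice of $d_0$ via Lemma \ref{lemma-LocAlgIff}.
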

\begin{proof}
  By Lemma \ref{lemma_Ore-subalg}, $D[x,\sigma]$ is a subalgebra of $D(\sigma,a)$.
  Thus, by \cite[Lemma 3.1]{Krause-Lenagan_2000} and \cite[Proposition 3.3 (1)]{zhang1997note}, we have
  \[
  \GK(D(\sigma,a))\geq \GK(D[x,\sigma])\geq \GK(D)+2,
  \]
  which completes the proof.
\end{proof}

Following the proof of \cite[Theorem 3.2]{zhang1997note}, we have the following lemma without the assumption that $k$ is algebraically closed.

\begin{lemma}
  \label{lemma-PnSMC}
   For $n\geq 0$, the polynomial algebra $P_n$ satisfies $\SM(V_n,c_n,n)$
  where
  \[
  V_n=k+kz_1+\cdots+kz_n,\
  c_n=\frac{1}{2^n\sqrt5^{n(n+1)}n!}.
  \]
\end{lemma}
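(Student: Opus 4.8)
The plan is to prove this by induction on $n$, which is the natural strategy given the inductive shape of the constants $V_n$, $c_n$. The base case $n=0$ is trivial: $P_0=k$ has $\GK=0$, $V_0=k$, $c_0=1$, and for any finite-dimensional $W\ni V_0a=ka$ with $a\in k^*$ we have $W\supseteq k$, so $\dim(W^m)\geq 1=c_0\dim(W)m^0$ as soon as $\dim(W)=1$; more care is needed if $\dim(W)>1$, but since $P_0=k$ the only finite-dimensional subspaces are $0$ and $k$ itself, so this is fine. The content is in the inductive step $P_{n-1}\rightsquigarrow P_n$.

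For the inductive step, I would follow the scheme of \cite[Theorem 3.2]{zhang1997note}. Write $P_n=P_{n-1}[z_n]$ and let $W$ be a finite-dimensional subspace of $P_n$ containing $V_n a$ for some regular (equivalently, nonzero) element $a\in P_n$. Filter $P_n$ by the $z_n$-degree: for $f\in P_n$ let $\deg_{z_n}(f)$ be its degree in $z_n$, giving a $\ZZ$-filtration whose associated graded algebra is again $P_n$ (with $z_n$ now treated as the leading-term variable). Applying $\gr$ with respect to this filtration and using Lemma \ref{lemma-LeadTermMap}, one has $\dim(W)=\dim(\gr(W))$ and $\dim(W^m)\geq\dim((\gr W)^m)$, and $\gr(W)$ contains $\gr(V_n)\gr(a)=\gr(V_n)\,\lambda z_n^{d}$ where $d=\deg_{z_n}(a)$ and $\lambda\in P_{n-1}\setminus\{0\}$ is the leading $z_n$-coefficient of $a$. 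The key point is that $\gr(V_n)$ is still $V_{n-1}+kz_n$ up to the identification, so after factoring out the central element $z_n^d$ we get a subspace of $P_{n-1}[z_n]$ containing $V_{n-1}\lambda$ in $z_n$-degree $0$ and a nonzero multiple of $z_n$ in the next graded piece. I would then stratify $\gr(W)$ (or rather $\gr(W)\,z_n^{-d}$) by $z_n$-degree, say $\gr(W)z_n^{-d}=\bigoplus_{j} U_j z_n^{j}$ with $U_j\subseteq P_{n-1}$; the presence of a nonzero scalar multiple of $z_n^{j+1}\cdot$(something) lets one shift, so that $U_0\supseteq V_{n-1}\lambda$ (with $\lambda$ regular in $P_{n-1}$) and some $U_{j_0}$ contains a nonzero scalar, say $U_1\ni 1$. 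Then multiplying out, $(\gr(W)z^{-d})^m$ contains $\sum_{j=0}^{m}U_0^{m-j}z^{j}$ roughly, and applying the inductive hypothesis $\SM(V_{n-1},c_{n-1},n-1)$ to $U_0\supseteq V_{n-1}\lambda$ gives $\dim(U_0^{m-j})\geq c_{n-1}\dim(U_0)(m-j)^{n-1}$; summing over $j=0,\dots,m$ and invoking Lemma \ref{lemma-GK-sum} (that a sum of a degree-$(n-1)$ polynomial is degree $n$) yields $\dim((\gr W)^m)\geq c'\dim(\gr W)m^n$ for a suitable constant $c'$.

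The main obstacle, and the place requiring the most care, is the bookkeeping of the constant and ensuring the factor $\dim(W)$ (not just $\dim(U_0)$) appears on the right-hand side. One must argue that $\dim(\gr W)\leq (\text{number of occupied }z_n\text{-degrees})\cdot\max_j\dim(U_j)$ and that, because $\gr(W)z^{-d}$ contains $U_0 z^{j}$ for a range of $j$ once it contains $U_0$ in degree $0$ and a unit in degree $1$, the product power $(\gr W)^m$ has dimension at least something like $\dim(U_0)\cdot m$ times the count that recovers $\dim(W)$; comparing $\max_j\dim(U_j)$ against $\dim(U_0)$ uses again that each $U_j$, after clearing denominators/leading coefficients, is comparable. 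This is exactly where the factors $2$, $\sqrt5$, and $n!$ in $c_n=\frac{1}{2^n\sqrt5^{n(n+1)}n!}$ come from — tracking them honestly through the induction is the bulk of the work, and I expect this to be the only genuinely delicate part. I would also need to double-check that the hypothesis ``$a$ regular'' is correctly used (in a polynomial domain regular just means nonzero, so its leading $z_n$-coefficient $\lambda$ is nonzero, hence regular in $P_{n-1}$, which is what feeds the inductive hypothesis).

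Aside from constant-chasing, everything else is routine: the reduction to the associated graded is Lemma \ref{lemma-LeadTermMap}, the degree-counting for sums of polynomials is Lemma \ref{lemma-GK-sum}, and the $z_n$-degree stratification is elementary linear algebra over $P_{n-1}$.
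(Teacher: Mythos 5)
Your skeleton is the same as the paper's (induction on $n$, pass to the associated graded with respect to the $z_n$-degree filtration via Lemma \ref{lemma-LeadTermMap}, stratify into slices $U_j z_n^{j}$, apply the inductive SMC, sum with Lemma \ref{lemma-GK-sum}), but there is a genuine gap exactly at the point you flag. Your argument only yields $\dim\bigl((\gr W)^m\bigr)\geq c_{n-1}\dim(U_0)\,m^{n}$, i.e.\ a bound in terms of a \emph{single} slice, whereas SMC demands the full $\dim(W)=\sum_j\dim(U_j)$ with a constant independent of $W$. The patch you sketch does not work: $\gr(W)$ is merely a graded subspace, so containing $U_0$ in degree $0$ and $\lambda z_n$ in degree $1$ does not make it contain $U_0z_n^{j}$ for a range of $j$; and there is no reason the slices $U_j$ are ``comparable'' to $U_0$ --- the inductive hypothesis gives power growth only for subspaces containing a translate $V_{n-1}b$, which a generic slice need not contain, and $\dim(U_j)$ can be arbitrarily large relative to $\dim(U_0)$, so $\max_j\dim(U_j)$ cannot be traded against $\dim(U_0)$ uniformly. (A small slip besides: the degree-$(d+1)$ piece contains $\lambda z_n^{d+1}$ with $\lambda\in P_{n-1}$ nonzero, not a nonzero \emph{scalar} times $z_n^{d+1}$; harmless, since $\lambda$ is regular, but your later ``$U_1\ni 1$'' is not available.)

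The paper closes this gap with two devices missing from your outline. First, it replaces $\gr(W)$ by $W'=\gr(W)\,V_n\alpha z^{p}\subseteq(\gr(W))^2$: multiplying by $V_n\alpha z^{p}$ forces \emph{every} $z$-homogeneous component of $W'$ to contain a translate $V_n b_i$ with $b_i\neq0$, so the inductive SMC applies to every slice, while $\dim(W')\geq\dim(\gr(W))=\dim(W)$; the price is working with $(\gr W)^{2[m/2]}$, i.e.\ replacing $m$ by $[m/2]$, which is precisely where the $5^{n+1}$ per step (hence $\sqrt5^{\,n(n+1)}$ in $c_n$) comes from. Second, for subspaces of this special form it runs an inner induction on the number $l$ of occupied $z$-degrees: the cross-terms $(W_l z^{p_l})^{m-i}(W_{l+1}z^{p_{l+1}})^{i}$ lie in strictly higher $z$-filtration degree than $U_l^{m}$, so their dimensions add, and SMC applied to the new component $W_{l+1}$ contributes about $\tfrac{c_n}{n+1}\dim(W_{l+1})m^{n+1}$; accumulating over $l$ is what inserts the entire $\dim(W)=\sum_i\dim(W_i)$ into the lower bound (and generates the $2^{n}$ and $n!$ in $c_n$). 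Without these two steps the constants you hope to ``track honestly'' cannot materialize, because the structural moves that produce them are absent; so as written the proposal does not prove the lemma.
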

\begin{proof}
The proof of \cite[Theorem 3.2]{zhang1997note} with small modification still works for our lemma.
For the reader's convenience, we include the slightly modified proof in the Appendix.
\end{proof}

\begin{corollary}
  \label{coro-SMCPoly}
  Suppose $D$ is an algebra such that $P_n\subseteq D\subseteq Q_n$.
  Then $D$ satisfies SMC.
\end{corollary}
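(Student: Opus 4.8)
The plan is to deduce Corollary~\ref{coro-SMCPoly} from Lemma~\ref{lemma-PnSMC} and Lemma~\ref{lemma-SMCQuotient} by a squeezing argument. Since $P_n\subseteq D\subseteq Q_n$ and $Q_n$ is the fraction field of the domain $P_n$, the fraction field $Q(D)$ of $D$ is exactly $Q_n$: indeed every element of $Q_n$ is a quotient of two elements of $P_n\subseteq D$, so $Q_n\subseteq Q(D)$, and the reverse inclusion is immediate from $D\subseteq Q_n$. Thus it suffices to understand the SMC for the sandwiched ring $D$ in terms of its fraction field.

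First I would record that $Q_n$ satisfies $\SM(V_n,c_n,n)$ as an honest statement about the \emph{fraction field}: by Lemma~\ref{lemma-PnSMC}, $P_n$ satisfies $\SM(V_n,c_n,n)$, and since $V_n\subseteq P_n$ we may apply Lemma~\ref{lemma-SMCQuotient} (the ``if'' direction, with the regular/nonzero element taken to be $1$) to conclude that $Q_n=Q(P_n)$ satisfies $\SM(V_n,c_n,n)$. Note $\GK(Q_n)=\GK(P_n)=n$ by Lemma~\ref{lemma-SMCQuotient}'s standing hypothesis that the GK-dimension is $d$ on both sides (this is part of what Zhang's lemma gives; alternatively $\GK$ of a finitely generated field of transcendence degree $n$ is $n$).

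Next, apply Lemma~\ref{lemma-SMCQuotient} in the other direction to $D$: since $Q(D)=Q_n$ satisfies $\SM(V_n,c_n,n)$ and $D$ is a commutative domain, the lemma yields that $D$ satisfies $\SM(V_n a, c_n, n)$ for some nonzero $a\in D$ with $V_n a\subseteq D$ (take $a$ to be a common denominator clearing the finitely many elements of a basis of $V_n$ into $D$, which is possible as $V_n\subseteq Q_n=Q(D)$). In particular $D$ satisfies the sensitive multiplicity condition with the finite-dimensional subspace $V_0:=V_n a$, constant $c_n$, and $d=n=\GK(D)$; here $\GK(D)=n$ because $P_n\subseteq D\subseteq Q_n$ forces $\GK(P_n)\le\GK(D)\le\GK(Q_n)$ and both ends equal $n$.

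The argument is essentially bookkeeping, so there is no genuine obstacle; the only point requiring a moment's care is the verification that $Q(D)=Q_n$ and that $\GK(D)=n$, so that the parameter $d$ in the SMC is the \emph{same} integer on the ring and on its fraction field — this is what makes Lemma~\ref{lemma-SMCQuotient} directly applicable in both directions. Once that is in place, two invocations of Lemma~\ref{lemma-SMCQuotient} (first to lift $P_n$'s SMC to $Q_n$, then to push $Q_n$'s SMC down to $D$) finish the proof. I would write this up in three or four lines, citing Lemma~\ref{lemma-PnSMC} and Lemma~\ref{lemma-SMCQuotient} explicitly.
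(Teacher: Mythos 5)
Your proof is correct and takes essentially the same route as the paper: both lift $\SM(V_n,c_n,n)$ from $P_n$ to $Q_n$ via Lemma \ref{lemma-SMCQuotient}, note $\GK(D)=n$, and then transfer the condition to the sandwiched algebra $D$ --- the paper simply does this last step by checking the definition directly (a finite-dimensional subspace of $D$ containing $V_na$ with $0\neq a\in D$ is already a finite-dimensional subspace of $Q_n$ with $a$ regular, so the SMC of $Q_n$ applies verbatim), rather than invoking Lemma \ref{lemma-SMCQuotient} a second time through $Q(D)=Q_n$ as you do. Since $V_n\subseteq P_n\subseteq D$, your common-denominator remark is unnecessary (one may take $a=1$), and the paper's direct check has the small advantage of keeping $V_n$ itself as the witness subspace for $D$.
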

\begin{proof}
  Let $V=k+kz_1+\cdots+kz_n\subseteq D$.
  Suppose that $W$ is a \fd\ subspace of $D$ such that
  $Va\subseteq W$ for some $0\neq a\in D$.
  Note that $\GK(D)=n$ since $\GK(P_n)=\GK(Q_n)=n$.
  By Lemmas  \ref{lemma-PnSMC} and \ref{lemma-SMCQuotient}, $Q_n$ satisfies $\SM(V_n,c_n,n)$, where $V_n, c_n$ are as in Lemma \ref{lemma-PnSMC}.
  Hence by the definition of SMC
  \[
  \dim(W^m)\geq c\dim(W) m^n \text{ for some $c>0$ and all}\ m\in\NNN,
  \]
  which means that $D$ satisfies SMC.
\end{proof}

Now we have the following result without the assumption that
$k$ is algebraically closed.

\begin{proposition}%[Theorem \ref{thm-main-}]
  \label{prop-Zhang97PL}
  Let $A=D(\sigma,a)$ be a GWA,
   where $P_n\subseteq D\subseteq Q_n$.
  Then the following statements are equivalent:
  \begin{enumerate}[(i)]
    \item $\GK(A)=\GK(D)+1$.
    \item
    $\sigma$ is locally algebraic.
  \end{enumerate}
\end{proposition}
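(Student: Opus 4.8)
The implication (ii) $\Rightarrow$ (i) is immediate from Lemma \ref{lem_GK-CA}(ii), since whenever $\sigma$ is locally algebraic we get $\GK(A) = \GK(D) + 1$ regardless of the structure of $D$. So the plan is to concentrate on (i) $\Rightarrow$ (ii), which I would prove by contraposition: assuming $\sigma$ is \emph{not} locally algebraic, I want to show $\GK(A) > \GK(D) + 1$, i.e.\ $\GK(A) \ge \GK(D) + 2$ (which together with Lemma \ref{lem_GK-CA}(i) forces $\GK(A) \neq \GK(D)+1$).

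\smallskip

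The key ingredients are already assembled in the excerpt. First, by Corollary \ref{coro-SMCPoly}, any algebra $D$ with $P_n \subseteq D \subseteq Q_n$ satisfies SMC (and $\GK(D) = n$). Second, $D$ is a commutative domain, since it sits inside the field $Q_n$. Third, the hypothesis of the contrapositive gives that $\sigma$ is not locally algebraic. These are exactly the three hypotheses of Lemma \ref{lemma-SMC-GK+2}: $D$ commutative satisfying SMC, and $\sigma \in \Aut(D)$ not locally algebraic. Applying that lemma yields
\[
\GK(A) = \GK(D(\sigma,a)) \ge \GK(D) + 2.
\]
In particular $\GK(A) \neq \GK(D) + 1$, so (i) fails, completing the contrapositive and hence establishing (i) $\Rightarrow$ (ii).

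\smallskip

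So the proof is essentially a matter of quoting the right two results (Corollary \ref{coro-SMCPoly} and Lemma \ref{lemma-SMC-GK+2}) in the correct order, after reducing to the contrapositive. I do not expect any genuine obstacle here: all the substantive work — verifying the SMC for $P_n$ (Lemma \ref{lemma-PnSMC}), pushing SMC through localization (Lemma \ref{lemma-SMCQuotient}) and through intermediate subrings (Corollary \ref{coro-SMCPoly}), and extracting the $\GK(D)+2$ lower bound from SMC plus non-local-algebraicity via the Ore subalgebra $D[x;\sigma] \subseteq A$ (Lemma \ref{lemma-SMC-GK+2}, itself leaning on \cite{zhang1997note})— has been done beforehand. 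The one point to state carefully is the orientation: we prove (i)$\Rightarrow$(ii) contrapositively and (ii)$\Rightarrow$(i) directly, rather than the other way around, since Lemma \ref{lem_GK-CA}(ii) is stated in that direction.
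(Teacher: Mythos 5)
Your proposal is correct and follows the paper's own proof exactly: the paper likewise deduces from Corollary \ref{coro-SMCPoly} that $D$ satisfies SMC, then obtains (ii)$\Rightarrow$(i) from Lemma \ref{lem_GK-CA} and (i)$\Rightarrow$(ii) (contrapositively) from Lemma \ref{lemma-SMC-GK+2}. The only difference is that you spell out the contrapositive reduction more explicitly than the paper does.
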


\begin{proof}
  By Corollary \ref{coro-SMCPoly}, $D$ satisfies SMC.
  Then the statement follows from Lemmas \ref{lem_GK-CA} and \ref{lemma-SMC-GK+2}.
\end{proof}

Proposition \ref{prop-Zhang97PL} is a partial converse of Lemma \ref{lem_GK-CA} (ii).
If $k$ is algebraically closed,
we have the following proposition,
which is an analogue of Lemma \ref{lemma-zhang97}.

\begin{proposition}
  \label{prop-Zhang97}
  Let $A=D(\sigma,a)$ be a GWA,
   where $D$ is a commutative domain over an algebraically closed field $k$ such that the fraction field of $D$ is finitely generated as a field.
  Then the following statements are equivalent:
  \begin{enumerate}[(i)]
    \item $\GK(A)=\GK(D)+1$.
    \item
    $\sigma$ is locally algebraic.
  \end{enumerate}
 If moreover $D$ is a field,
 then (i) and (ii) are equivalent to the following statement:
 \begin{enumerate}
    \item[(iii)]
     $\sigma$ has a finite order.
    \end{enumerate}
\end{proposition}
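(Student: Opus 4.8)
The plan is to deduce Proposition~\ref{prop-Zhang97} from the results already assembled in this section, reducing it as far as possible to Lemma~\ref{lemma-zhang97} (Zhang's theorem for Ore extensions) and to the GWA-specific machinery. First I would dispose of the equivalence $(i)\Leftrightarrow(ii)$. The implication $(ii)\Rightarrow(i)$ is immediate from Lemma~\ref{lem_GK-CA}~(ii). For $(i)\Rightarrow(ii)$, I would argue contrapositively: suppose $\sigma$ is not locally algebraic. A commutative domain $D$ whose fraction field $Q(D)$ is finitely generated as a field satisfies SMC; indeed $Q(D)$ is a finite field extension of some $Q_m=k(z_1,\dots,z_m)$, and by Lemma~\ref{lemma-PnSMC}, Lemma~\ref{lemma-SMCQuotient}, and a transfer of SMC along a finite field extension (the argument of Corollary~\ref{coro-SMCPoly}, together with the fact proved in \cite{zhang1997note} that SMC passes to finite extensions of fraction fields), $D$ satisfies SMC. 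Then Lemma~\ref{lemma-SMC-GK+2} gives $\GK(D(\sigma,a))\geq\GK(D)+2$, so $(i)$ fails. Combined with Lemma~\ref{lem_GK-CA}~(i), this shows $(i)\Leftrightarrow(ii)$.

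Next I would handle the case where $D$ is a field, adding $(iii)$. The implication $(iii)\Rightarrow(ii)$ is elementary: if $\sigma^m=\id$, then for any $a\in D$ the set $\{\sigma^i(a):i\in\NNN\}$ is the finite set $\{a,\sigma(a),\dots,\sigma^{m-1}(a)\}$, hence spans a finite-dimensional subspace, and Lemma~\ref{lemma-LocAlgIff} applies. For $(ii)\Rightarrow(iii)$ when $D$ is a field, I would invoke Lemma~\ref{lemma-zhang97}: its hypotheses (commutative domain over algebraically closed $k$, finitely generated fraction field, $D$ a field) match exactly, and it states that locally algebraic $\sigma$ on such a field has finite order. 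Thus $(i)\Leftrightarrow(ii)\Leftrightarrow(iii)$ in the field case.

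The only genuinely delicate point is the SMC transfer used in the first paragraph: Lemma~\ref{lemma-PnSMC} and Corollary~\ref{coro-SMCPoly} are stated for $P_n\subseteq D\subseteq Q_n$, i.e. for subalgebras of a purely transcendental extension, whereas here $Q(D)$ is merely a finitely generated field, so it is a finite algebraic extension of some such $Q_n$ but need not itself lie between $P_n$ and $Q_n$. So I expect the main obstacle to be verifying that SMC propagates along a finite field extension $Q_n\subseteq Q(D)$. I would either cite the relevant part of \cite[Lemma~3.1, Theorem~3.2]{zhang1997note}, where exactly this propagation is carried out (the algebraically closed hypothesis there being used precisely to put $Q(D)$ in such a form), or spell out the short argument: choosing a $Q_n$-basis $u_1,\dots,u_r$ of $Q(D)$ and a finite-dimensional $V_0'\subseteq Q(D)$ containing $V_n\cdot\{u_1,\dots,u_r\}$, one checks that any finite-dimensional $W\subseteq Q(D)$ containing $V_0' q$ for some $q\neq0$ contains, after intersecting with a suitable $Q_n$-line, a translate of $V_n$, so that the dimension lower bound over $Q_n$ lifts to one over $Q(D)$ with the constant $c$ scaled by a factor depending only on $r$. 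Once this is in hand the proposition follows by assembling the cited lemmas as above.

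Finally I would remark that Proposition~\ref{prop-Zhang97PL} is the special case $D=P_n$ (or any algebra squeezed between $P_n$ and $Q_n$) where the algebraic-closure hypothesis can be dropped, precisely because in that case the base field plays no role in the SMC computation of Lemma~\ref{lemma-PnSMC}; the only place algebraic closedness enters the present proposition is the field-case equivalence with $(iii)$, inherited from Lemma~\ref{lemma-zhang97}.
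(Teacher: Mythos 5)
Your proposal is correct and takes essentially the same route as the paper: (ii)$\Rightarrow$(i) via Lemma \ref{lem_GK-CA}, (i)$\Rightarrow$(ii) via Lemma \ref{lemma-SMC-GK+2} after securing SMC for $D$ from \cite[Lemma 3.1, Theorem 3.2]{zhang1997note} (the same external input the paper relies on implicitly), and the field-case equivalence with (iii) via Zhang's result. The only cosmetic difference is that the paper re-runs the argument of Lemma \ref{lemma-zhang97} for (ii)$\Leftrightarrow$(iii), whereas you cite its statement directly, which is legitimate since that equivalence concerns only $\sigma$ and $D$ and not the extension being formed.
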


\begin{proof}
  (ii)$\Rightarrow$(i) follows from Lemma \ref{lem_GK-CA}.
  (i)$\Rightarrow$(ii) follows from Lemma \ref{lemma-SMC-GK+2}.

  (ii)$\Leftrightarrow$(iii).
  The proof of Lemma \ref{lemma-zhang97} (see \cite[p.369]{zhang1997note}) still works for our statements.
\end{proof}

Note that (i)$\Rightarrow$(ii) in Proposition \ref{prop-Zhang97} fails if $D$ is not commutative,
see Example \ref{exam-GK+1-NotLA}.

\begin{example}\upshape
  \label{exam-GK+1-NotLA}
  Let $D=k[z^{\pm1},s][d;\sigma]$ be an Ore extension over polynomial ring $k[z^{\pm1},s]:=k[z^{\pm1}][s]$,
  where the automorphism $\sigma$ is defined by
  $\sigma(z)=z$, $\sigma(s)=zs$.
  Let $A=D(x,y;\tau,1)$ be a GWA, where $\tau\in\Aut(D)$ satisfies
  $\tau(z)=z$, $\tau(s)=s$, $\tau(d)=z^{-1}d$.
  Then $\tau$ is not locally algebraic but
  $\GK(A)=\GK(D)+1=5$.
\end{example}

\begin{proof}
  First note that $D$ is the differential difference algebra defined in \cite[Example 3.12]{ZhangZhao2013algebras} and
  $\GK(D)=4$ (see \cite[p.494]{ZhangZhao2013algebras}).
  By Lemma \ref{lem_GK-CA},
  $\GK(A)\geq \GK(D)+1=5$.
  It remains to show that
  $\GK(A)\leq 5$.
  Rewrite $A$ as an Ore extension over $C:=k[x^{\pm1},z^{\pm1},s]$,
  that is,
  $A=C[d;\delta]$,
  where $\delta\in\Aut(C)$ is defined by
  $\delta(z)=z$, $\delta(x)=zx$, $\delta(s)=zs$.
  Let $V=k\{x^{\pm1},z^{\pm1},s,1\}$.
  Then $W=V+kd$ is a \fd\ generating subspace of $A$.
  By induction, we can prove that, for $m\geq 1$,
  \begin{align*}
    W^m=\sum_{i=0}^m W_i^{m-i}d^i,\ \
    W_i:=V+\delta(V)+\cdots+\delta^i(V)
  \end{align*}
  (see also \cite[Eq. (3.3.1) and (3.3.2)]{zhang1997note})
  and that,
  for $0\leq i\leq m$,
  \begin{align*}
   W_i=k\{1,
   z^{\pm1},z^jx,z^{-j}x^{-1},
   z^js: 0\leq j\leq i\}.
  \end{align*}
  Thus
  \begin{align*}
    W_i^{m-i}\subseteq k\{z^{p_1}x^{p_2}s^{p_3}:-i(m-i)\leq p_1\leq i(m-i),
    -(m-i)\leq p_2\leq m-i,
    0\leq p_3\leq m-i\}.
  \end{align*}
  As a result,
  \begin{align*}
    \dim(W^m)=
    \sum_{i=0}^m \dim(W_i^{m-i})
    \leq \sum_{i=0}^m(2i(m-i)+1)(2(m-i)+1)(m-i+1)\sim m^5,
  \end{align*}
  where $\sim$, meaning the equivalence of two functions in $\Phi$ (see Section \ref{sec_preliminaries}),
  follows from Lemma \ref{lemma-GK-sum}.
  Therefore, $\GK(A)={\limsup_{m\to\infty}}\log_m\dim (W^m)\leq 5$
  as desired.
\end{proof}

Proposition \ref{prop-Zhang97} together with Lemma \ref{lemma-Simplicity} give a criterion of the simplicity for certain GWAs.
\begin{corollary}
  \label{coro-Simple}
  Suppose that $k$ is algebraically closed and $A=D(\sigma,a)$ is a GWA,
  where $D$ is a \fg\ field and $a\neq 0$.
  Then the following statements are equivalent:
  \begin{enumerate}[(i)]
    \item
    $A$ is simple.
    \item
    $\GK(A)\neq\GK(D)+1$.
  \end{enumerate}
\end{corollary}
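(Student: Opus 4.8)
The plan is to combine the criterion for $\GK(A)=\GK(D)+1$ from Proposition \ref{prop-Zhang97} with Bavula's simplicity criterion from Lemma \ref{lemma-Simplicity}, exploiting that $D$ is now a field. I will prove the contrapositive in both directions, i.e. I will show that $A$ is \emph{not} simple if and only if $\GK(A)=\GK(D)+1$. By Proposition \ref{prop-Zhang97}, the latter is equivalent to $\sigma$ having finite order (here we use that $k$ is algebraically closed and $D$ is a \fg\ field), so the real task is to show that $A$ is not simple precisely when $\sigma^m=\id$ for some $m\geq 1$.

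First I would check the four conditions of Lemma \ref{lemma-Simplicity} under our hypotheses. Since $D$ is a field, its only ideals are $0$ and $D$, so condition (i) holds automatically; and since $a\neq 0$ and $D$ is a domain, $a$ is regular, so condition (iv) holds. Moreover $Da=D$ because $a$ is a unit in the field $D$, so $Da+D\sigma^i(a)=D$ and condition (iii) holds for all $i$. Hence $A$ is simple if and only if condition (ii) holds, namely $\sigma^m$ is not inner for any $m\geq 1$. But $D$ is commutative, so an inner automorphism of $D$ is the identity; thus condition (ii) says exactly that $\sigma^m\neq\id$ for all $m\geq 1$, i.e. that $\sigma$ does \emph{not} have finite order. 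Therefore $A$ is simple if and only if $\sigma$ has infinite order, equivalently (by Proposition \ref{prop-Zhang97}, (ii)$\Leftrightarrow$(iii)) if and only if $\sigma$ is not locally algebraic, equivalently (by the same proposition, (i)$\Leftrightarrow$(ii)) if and only if $\GK(A)\neq\GK(D)+1$.

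The only subtlety — and the one step that deserves care rather than being routine — is invoking the equivalence (iii)$\Leftrightarrow$(i) of Proposition \ref{prop-Zhang97}: this requires that $D$ be a commutative domain over an algebraically closed field whose fraction field is \fg\ as a field, which is guaranteed here because $D$ is itself a \fg\ field over the algebraically closed $k$ (so $D$ is its own fraction field). I expect no genuine obstacle; the proof is essentially a bookkeeping exercise of matching up the two cited results, the main point being the observation that over a commutative $D$ the "no inner power" condition in Lemma \ref{lemma-Simplicity} collapses to "$\sigma$ has infinite order," which is exactly the negation of condition (iii) in Proposition \ref{prop-Zhang97}.
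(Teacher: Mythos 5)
Your proposal is correct and follows essentially the same route as the paper: in both, one observes that for a field $D$ with $a\neq 0$ all conditions of Lemma \ref{lemma-Simplicity} except the ``no inner power'' condition hold automatically, that condition collapses to $\sigma^m\neq\id$ for all $m\geq1$ since $D$ is commutative, and then Proposition \ref{prop-Zhang97} (the equivalence of finite order with $\GK(A)=\GK(D)+1$) finishes the argument. Your write-up is in fact slightly more careful than the paper's, which nominally cites conditions ``(i)--(iii)'' as automatic where it means (i), (iii), (iv).
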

\begin{proof}
Note that if $D$ is a field and $a\neq 0$ then
conditions (i)--(iii) in Lemma \ref{lemma-Simplicity} are satisfied automatically.
As a result,
under the assumption,
$A$ is simple if and only if $\sigma^m$ is not inner for all $m\geq 1$,
if and only if $\sigma^m\neq \id$ for all $m\geq1$,
if and only if $\GK(A)\neq\GK(D)+1$
(by Proposition \ref{prop-Zhang97}).
\end{proof}

\

Now we are ready to prove Theorem \ref{thm-main-}.

\noindent
\emph{\textbf{Proof of Theorem \ref{thm-main-}.}}
It follows from Propositions \ref{prop-Zhang97PL} and \ref{prop-Zhang97}, and Corollary \ref{coro-Simple}.
\hfill\qed
\section{GK-dimension of GWAs over polynomial algebras}
\label{sce-Poly}
In this section,
we study GK-dimension of GWAs over $P_n:=k[z_1,z_2,\dots,z_n]$,
in particular over $P_2$.
We first prove a lemma, which will be used later.
\begin{lemma}
  \label{lemma_Power}
  Let $A=D(x,y;\sigma,a)$ be a GWA, $m\in\NNN^*$
  and $B$ be the subalgebra of $A$ generated by $D\cup \{x^m,y^m\}$.
  Then $B=D(x^m,y^m;\sigma^m,b)$ is a GWA and $\GK(A)=\GK(B)$, where
  \[
  b=b_m=\sigma^{-(m-1)}(a) \sigma^{-(m-2)}(a)\cdots \sigma^{-1}(a)a.
  \]
\end{lemma}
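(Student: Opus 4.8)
The plan is to verify the two assertions in turn: first that $B$ is (isomorphic to) the GWA $D(x^m,y^m;\sigma^m,b)$ with $b=b_m$ as stated, and then that $\GK(A)=\GK(B)$.

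For the structural claim I would set $X=x^m$, $Y=y^m$ and check the GWA relations for $(\sigma^m,b)$ directly from the relations in $A$. The commutation relations are immediate by iteration: $x^m d=\sigma^m(d)x^m$ and $y^m d=\sigma^{-m}(d)y^m$ for all $d\in D$, since $\sigma$ is an automorphism and $a$ is central. For the product relations, I would compute $YX=y^mx^m$ and $XY=x^my^m$ by repeatedly using $yx=a$, $xy=\sigma(a)$, and the commutation rules to move the middle factors past $D$-elements; a short induction on $m$ gives
\[
y^mx^m=\sigma^{-(m-1)}(a)\sigma^{-(m-2)}(a)\cdots\sigma^{-1}(a)\,a=b,
\qquad
x^my^m=\sigma^m(b),
\]
the second identity following from the first by applying $\sigma^m$ (or by the symmetric computation). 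Since $b$ is a product of central elements it is central in $D$, so $D(x^m,y^m;\sigma^m,b)$ is a bona fide GWA. By the universal property (Proposition \ref{prop-universal}), the assignment sending the abstract generators to $D$, $X=x^m$, $Y=y^m$ extends to an algebra homomorphism onto $B$; that it is injective follows from the freeness of $A$ as a left $D$-module with basis $\{x^i,y^j\}$, which restricts to show $\{X^i,Y^j\}=\{x^{mi},y^{mj}\}$ is a left $D$-basis of $B$. Thus $B\cong D(x^m,y^m;\sigma^m,b)$.

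For $\GK(A)=\GK(B)$: the inequality $\GK(B)\le\GK(A)$ is immediate since $B$ is a subalgebra (\cite[Lemma 3.1]{Krause-Lenagan_2000}). For the reverse, I would observe that $A$ is a finitely generated left (and right) $B$-module: using the $\ZZ$-grading $A=\bigoplus_{i\in\ZZ}A_i$ and $B=\bigoplus_{i\in\ZZ}B_i$ where $B_{mi}=Dx^{mi}$ (for $i>0$), etc., one sees $A=\sum_{r=0}^{m-1}(Bx^r+By^r)$, a finite sum of cyclic $B$-modules. Then the standard fact that GK-dimension is preserved under finite module extensions — $\GK(A)=\GK(B)$ when $A$ is a finitely generated module over a subalgebra $B$ (see \cite[Proposition 5.5]{Krause-Lenagan_2000} or \cite[Chapter 8]{McConnell-Robson2001}) — finishes the argument. (Alternatively one can argue directly with growth functions: a subframe $V$ of $A$ is contained in $W+\sum_{r}(W'x^r+W'y^r)$ for a subframe $W$ of $B$, and $V^n$ grows like $W'^{\,n}$ up to a bounded factor, giving $\GK(A)\le\GK(B)$.)

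The main obstacle is the bookkeeping in the product computation $y^mx^m=b_m$: one must carefully track how the central factors $\sigma^{-j}(a)$ accumulate as inner $yx$-pairs are collapsed and the resulting central elements are transported past the surviving $x$'s and $y$'s, and likewise confirm $x^my^m=\sigma^m(b_m)$ with the correctly $\sigma$-shifted factors. This is purely a finite induction on $m$ but is the one place where a sign/shift error would be easy to make; everything else (the commutation relations, centrality of $b$, injectivity via the $D$-module basis, and the module-finiteness giving equality of GK-dimensions) is routine.
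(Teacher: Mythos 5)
Your proposal is correct and follows essentially the same route as the paper: verify the commutation relations, compute $y^mx^m=b$ and $x^my^m=\sigma^m(b)$ with $b$ central, and then conclude $\GK(A)=\GK(B)$ from $A$ being a finitely generated $B$-module via \cite[Proposition 5.5]{Krause-Lenagan_2000}. The only differences are elaborations (the universal-property/$D$-basis injectivity check and the explicit decomposition $A=\sum_{r=0}^{m-1}(Bx^r+By^r)$) of steps the paper treats as immediate.
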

\begin{proof}
  First note that $x^md=\sigma^m(d)x^m$ and
   $y^md=\sigma^{-m}(d)y^m$ for all $d\in D$.
   Let \[
  b=\sigma^{-(m-1)}(a) \sigma^{-(m-2)}(a)\cdots \sigma^{-1}(a)a\in D.
  \]
  By using the defining relations of $A$,
  it is straightforward to check that $b=y^mx^m$.
  We claim that $b$ is central in $D$.
  For each $d\in D$ and $i\in\ZZ$,
  \[
  \sigma^i(a)d
  %=\sigma^i(a)\sigma^i(\sigma^{-i}(d))
  =\sigma^i(a\sigma^{-i}(d))
  =\sigma^i(\sigma^{-i}(d)a)
  %=\sigma^i(\sigma^{-i}(d))\sigma^i(a)
  =d\sigma^i(a),
  \]
  which implies $bd=db$ and the claim holds.

  Similarly, $\sigma^i(b)$ is central in $D$ for all $i\in \NNN$.
  Thus
  \begin{align*}
    \sigma^m(b)
    %&=\sigma^m(\sigma^{-(m-1)}(a) \sigma^{-(m-2)}(a)\cdots \sigma^{-1}(a)a)\\
    =\sigma(a)\sigma^2(a)\cdots\sigma^m(a)
    =\sigma^m(a)\sigma^{m-1}(a)\cdots\sigma(a)
    =x^my^m.
  \end{align*}
  Therefore, $B=D(x^m,y^m;\sigma^m,b)$ is a GWA.

  Since $A$ is \fg\ as a right $B$-module,
  we obtain by \cite[Proposition 5.5]{Krause-Lenagan_2000} that
  $\GK(A)=\GK(B)$.
\end{proof}

Now we can prove the following proposition.

\begin{proposition}\label{prop_GK_FiniteOrd}
  Suppose $A=D(\sigma,a)$ is a GWA.
  \begin{enumerate}[(i)]
    \item
    If $\sigma^m$ is an inner automorphism for some $m\geq 1$,
  then $\GK(A)=\GK(D)+1$.
  \item
  If $\sigma$ has a finite order then $\GK(A)=\GK(D)+1$.
  \end{enumerate}
\end{proposition}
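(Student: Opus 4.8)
The plan is to reduce both parts to the fact that $\GK(D(\id,b))=\GK(D)+1$ for every central $b\in D$, which is immediate from Lemma \ref{lem_GK-CA}(ii) since $\id$ is locally algebraic; throughout, the reverse inequality $\GK(A)\ge\GK(D)+1$ comes for free from Lemma \ref{lem_GK-CA}(i), so in each part it suffices to exhibit an isomorphism with a GWA of the form $D(\id,-)$, possibly after first passing to a subalgebra.

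The crux of part (i) will be the claim: \emph{if $\tau\in\Aut(D)$ is inner, then $D(x,y;\tau,c)\cong D(\id,c)$ as algebras.} To prove it I would write $\tau(d)=udu^{-1}$ for a unit $u\in D$ and set $X=u^{-1}x$, $Y=uy$. Since an inner automorphism fixes the unit implementing it, both $\tau$ and $\tau^{-1}$ fix $u$ and $u^{-1}$; substituting this into $xd=\tau(d)x$ and $yd=\tau^{-1}(d)y$ shows $X$ and $Y$ commute with every element of $D$, while a short cancellation gives $YX=yx=c$ and $XY=xy=\tau(c)=c$ (the last equality because $c$ is central). By the universal property of GWAs (Proposition \ref{prop-universal}), the inclusion $D\hookrightarrow D(x,y;\tau,c)$ together with $X,Y$ then induces an algebra homomorphism $\psi\colon D(\id,c)\to D(x,y;\tau,c)$ carrying the canonical generators to $X$ and $Y$. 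It is surjective because $x=uX$, $y=u^{-1}Y$; and it is injective because the same fixing relations give $X^i=u^{-i}x^i$ and $Y^j=u^{j}y^j$, so $\psi$ sends the free left $D$-basis $\{x^i,y^j:i\ge0,\,j>0\}$ of $D(\id,c)$ onto a left $D$-linearly independent family inside $D(x,y;\tau,c)$, which is itself $D$-free on $\{x^i,y^j:i\ge0,\,j>0\}$. So $\psi$ is an isomorphism.

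Part (i) would then follow at once: by Lemma \ref{lemma_Power}, the subalgebra $B$ of $A$ generated by $D\cup\{x^m,y^m\}$ is the GWA $D(x^m,y^m;\sigma^m,b)$ with $b\in\Z(D)$ and $\GK(A)=\GK(B)$; since $\sigma^m$ is inner, the claim gives $B\cong D(\id,b)$, hence $\GK(A)=\GK(B)=\GK(D)+1$. For part (ii), if $\sigma^m=\id$ for some $m\ge1$ then $\sigma$ is locally algebraic (for each $v\in D$ the subspace $k\{v,\sigma(v),\dots,\sigma^{m-1}(v)\}$ is finite-dimensional and $\sigma$-stable, so Lemma \ref{lemma-LocAlgIff} applies), and Lemma \ref{lem_GK-CA}(ii) gives $\GK(A)=\GK(D)+1$; this is also the special case $u=1$ of the inner case in (i).

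The step I expect to be the main obstacle is the injectivity of $\psi$: I must make sure that twisting $x$ and $y$ by the unit $u$ does not collapse the left $D$-module structure, and this is precisely where the freeness of a GWA as a left module over its base algebra is needed. Everything else should be routine manipulation of the defining relations, using repeatedly the two elementary facts that an inner automorphism fixes its implementing unit and fixes every central element.
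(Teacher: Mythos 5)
Your proposal is correct and follows essentially the same route as the paper: after passing to $B=D(x^m,y^m;\sigma^m,b)$ via Lemma \ref{lemma_Power}, you untwist by the implementing unit with the generators $u^{-1}x^m$ and $uy^m$ (exactly the paper's identification $B=D(u^{-1}x^m,uy^m;\id,b)$) and conclude with Lemma \ref{lem_GK-CA}. Your universal-property and $D$-freeness verification merely makes explicit the isomorphism the paper asserts directly, and your treatment of (ii), as the case $u=1$ or via local algebraicity, matches the paper's ``(ii) follows from (i)''.
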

\begin{proof}
(i) Let $A=D(x,y;\sigma,a)$ and $B$ be the subalgebra of $A$ generated by $D\cup \{x^m,y^m\}$.
Suppose $u\in D$ is an invertible element such that
  $\sigma^m(d)=udu^{-1}$ for all $d\in D$.
By Lemma \ref{lemma_Power}, $B=D(x^m,y^m;\sigma^m,b)$
where $b$ is a central element in $D$.
We claim further that $B=D(u^{-1}x^m,uy^m;\id,b)$.
For each $d\in D$,
\[
u^{-1}x^md=u^{-1}\sigma^m(d)x^m=u^{-1}udu^{-1}x^m=du^{-1}x^m
\]
and
$uy^md=duy^m$ similarly.
Note that $\sigma^{\pm m}(u)=u$ and $\sigma^{\pm m}(u^{-1})=u^{-1}$.
Thus \[
uy^mu^{-1}x^m=u\sigma^{-m}(u^{-1})y^mx^m=y^mx^m=b
\]
and
\[
u^{-1}x^m uy^m=x^m y^m=\sigma^m(b)=ubu^{-1}=b,
\]
where the last equality holds since
$b$ is central in $D$.
Thus,
$B=D(u^{-1}x^m,uy^m;\id,b)$.

Now it follows from Lemmas \ref{lemma_Power} and \ref{lem_GK-CA} that
$\GK(A)=\GK(B)=\GK(D)+1$.

(ii) follows from (i).
\end{proof}

Proposition \ref{prop-Zhang97} implies that the converse of Proposition \ref{prop_GK_FiniteOrd} (ii) also holds under certain conditions, while it does not hold in general,
see Example \ref{exam_FiniteOrd}.

\begin{corollary}
  Let $D$ be an algebra and $D[x,y]$ be the polynomial algebra in two variables over $D$.
  Suppose that
  $I$ is the ideal of $D[x,y]$ generated by $xy-a$ for some $a\in \Z(D)$.
  Then $\GK(D[x,y]/I)=\GK(D)+1$.
\end{corollary}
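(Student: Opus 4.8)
The plan is to identify the quotient $D[x,y]/I$ with the generalized Weyl algebra $D(\id_D,a)$ and then to invoke the results already established for GWAs with identity defining automorphism. Informally, in the polynomial algebra $D[x,y]$ the indeterminates $x$ and $y$ are central: they commute with each other and with every element of $D$. Passing to the quotient by $I=(xy-a)$ imposes in addition $xy=yx=a$; since also $\id_D(a)=a$, these are precisely the defining relations of the GWA $D(x,y;\id_D,a)$ (note $a\in\Z(D)$ is allowed to be $0$, which causes no difficulty).

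To turn this observation into an isomorphism I would use universal properties in both directions. For one direction, apply Proposition \ref{prop-universal} to the GWA $D(x,y;\id_D,a)$ with $B=D[x,y]/I$, with $\phi\colon D\to B$ the canonical homomorphism, and with $x'=\bar x$, $y'=\bar y$ the images of $x,y$: the required identities $x'\phi(d)=\phi(d)x'$, $y'\phi(d)=\phi(d)y'$, $y'x'=\phi(a)$ and $x'y'=\phi(a)$ hold in $B$ because $x,y$ are central over $D$ in $D[x,y]$ and $xy-a\in I$. This yields an algebra homomorphism $\psi\colon D(\id_D,a)\to D[x,y]/I$ with $\psi(x)=\bar x$, $\psi(y)=\bar y$ and $\psi|_D=\id$. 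For the other direction, the universal property of the polynomial algebra $D[x,y]$ produces a homomorphism $\theta\colon D[x,y]\to D(\id_D,a)$ fixing $D$ and sending $x\mapsto x$, $y\mapsto y$, because in $D(\id_D,a)$ the elements $x$ and $y$ commute with $D$ and with each other ($xy=\id_D(a)=a=yx$). Since $\theta(xy-a)=a-a=0$, the map $\theta$ factors through $I$ and gives $\bar\theta\colon D[x,y]/I\to D(\id_D,a)$. The composites $\psi\circ\bar\theta$ and $\bar\theta\circ\psi$ fix the generators $D\cup\{x,y\}$, respectively $D\cup\{\bar x,\bar y\}$, hence are the identity maps, so $D[x,y]/I\cong D(\id_D,a)$.

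Finally, the defining automorphism $\id_D$ is locally algebraic (every finite-dimensional subspace of $D$ is $\id_D$-stable), so Lemma \ref{lem_GK-CA}(ii) — or, equivalently, Proposition \ref{prop_GK_FiniteOrd}(ii), since $\id_D$ has finite order — gives $\GK(D[x,y]/I)=\GK\bigl(D(\id_D,a)\bigr)=\GK(D)+1$. There is no substantive obstacle here; the only points requiring a little care are the routine verifications of the defining relations in the two applications of the universal properties, and noting that the argument remains valid in the degenerate case $a=0$.
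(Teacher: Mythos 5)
Your proposal is correct and follows essentially the same route as the paper: identify $D[x,y]/I$ with the GWA $D(x,y;\id,a)$ via Proposition \ref{prop-universal} in one direction and the universal property of the polynomial algebra (factoring through $I$) in the other, then conclude via Proposition \ref{prop_GK_FiniteOrd} (equivalently, local algebraicity of $\id_D$ and Lemma \ref{lem_GK-CA}). No gaps; the verifications you flag as routine are exactly the ones the paper also leaves as routine.
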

\begin{proof}
  By Proposition \ref{prop-universal},
  there exists a homomorphism $\psi:D(x,y;\id,a)\to D[x,y]/I$ such that $\psi(x)=x$, $\psi(y)=y$ and $\psi(d)=d$ for all $d\in D$.
  Note that the homomorphism
  \[
  \psi':D[x,y]\to D(x,y;\id,a), x\mapsto x,y\mapsto y, d\mapsto d, \text{ for all } d\in D
  \]
  maps $I$ to $0$ since
  $\psi'(xy-a)=xy-a=0$.
  Hence $\psi'$ induces a homomorphism $\psi'':D[x,y]/I\to D(x,y;\id,a)$.
  It is easy to check that $\psi''\psi=\id_{D(\id,a)}$ and $\psi\psi''=\id_{D[x,y]}$.
  Thus $D[x,y]/I\cong D(x,y;\id,a)$.
  It  follows from
  Proposition \ref{prop_GK_FiniteOrd} that
  $\GK(D[x,y])=\GK(D(x,y;\id,a))=\GK(D)+1$.
\end{proof}

Now we consider GWAs $P_n(\sigma,a)$ over $P_n$.
Denote by $\deg(f)$ the total degree of $f\in P_n$.
If $n=0$, then $\sigma=\id_k$ and $\GK(k(\sigma,a))=1$.
If $n=1$, then $\sigma\in\Aut(k[z])$ is determined by $\sigma(z)=bz+c$ for some $b\in k^*$ and $c\in k$.
Hence every $\sigma\in\Aut(k[z])$ is locally algebraic and thus $\GK(k[z](\sigma,a))=\GK(k[z])+1=2$.
For $n\geq2$, we have the following

\begin{lemma}
  \label{lem_GK-infty}
  Let $D=P_n$, $n\geq 2$, and $A=D(x,y;\sigma,a)$ be a GWA over $D$.
  Suppose that there exist $z\in D$ and real number $r\geq2$ such that
  \(
  \deg(\sigma^{m+1}(z))\geq r \deg (\sigma^{m}(z))
  \)
  for all $m\in\NNN$.
  Then $A$ has exponential growth.
  In particular, $\GK(A)=\infty$.
\end{lemma}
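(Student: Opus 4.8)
The plan is to produce a single finite-dimensional subspace $V\subseteq A$ whose powers grow exponentially in dimension; since $A=P_n(\sigma,a)$ is finitely generated (by $z_1,\dots,z_n,x,y$), we may enlarge $V$ to a finite-dimensional generating subspace, and then exponential dimension growth of $V^n$ forces $\mathcal G(A)$ to be exponential (exponential growth being the maximal growth of a finitely generated algebra) and in any case forces $\GK(A)=\infty$ straight from Definition \ref{def-GKdim}. First I would dispose of the degenerate case: if $\deg z=0$ then, as $\sigma$ fixes $k$, $\deg(\sigma^m(z))=0$ for all $m$ and the hypothesis is vacuous, so we may and do assume $\deg z\geq 1$. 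Writing $\delta_m:=\deg(\sigma^m(z))\in\NNN$, the hypothesis reads $\delta_{m+1}\geq r\delta_m$ with $r\geq 2$ and $\delta_0\geq 1$, and an easy induction gives the super-increasing property $\delta_m>\delta_0+\delta_1+\cdots+\delta_{m-1}$ for all $m$. Consequently, for a fixed $t\geq 1$, the $2^{t}$ subset sums $\sum_{i\in S}\delta_i$ over $S\subseteq\{0,1,\dots,t-1\}$ are pairwise distinct.

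Next I would set $V=k\{1,x,y,z\}$ and use the single GWA relation $x^m d=\sigma^m(d)x^m$ ($d\in D$) to rewrite monomials in $x$ and $z$. Pushing all $x$'s to the right, a monomial $x^{e_0}zx^{e_1}z\cdots zx^{e_q}$ with $q$ occurrences of $z$ becomes $\bigl(\prod_{j=1}^{q}\sigma^{E_j}(z)\bigr)x^{p}$, where $E_j=e_0+\cdots+e_{j-1}$ and $p=e_0+\cdots+e_q$. Given $S=\{i_1<\cdots<i_s\}\subseteq\{0,\dots,t-1\}$, the choices $e_0=i_1\geq 0$, $e_j=i_{j+1}-i_j\geq 1$ for $1\leq j\leq s-1$, and $e_s=t-i_s\geq 0$ make all exponents non-negative and give $\{E_1,\dots,E_s\}=S$ and $p=t$; the resulting monomial has length $s+t\leq 2t$, hence (as $1\in V$ forces $V^i\subseteq V^{i+1}$) lies in $V^{2t}$, and since $P_n$ is commutative it equals $g_S\,x^{t}$ with $g_S:=\prod_{i\in S}\sigma^{i}(z)\in P_n$.

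To finish, observe that $g_S\neq 0$ (a product of nonzero elements in the domain $P_n$) and $\deg g_S=\sum_{i\in S}\delta_i$, so the $2^{t}$ polynomials $g_S$ have pairwise distinct degrees and are therefore linearly independent in $P_n$. Because $A$ is a free left $P_n$-module with $x^{t}$ among its basis elements, the $2^{t}$ elements $\{g_S x^{t}:S\subseteq\{0,\dots,t-1\}\}$ are linearly independent in $A$, and all of them lie in $V^{2t}$; hence $\dim(V^{2t})\geq 2^{t}$ for every $t\geq 1$, i.e.\ $\dim(V^{n})\geq 2^{\lfloor n/2\rfloor}$. This gives $\mathcal G(d_V)\geq\mathcal G(e^{n})$, whence $A$ has exponential growth and $\GK(A)=\infty$.

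I expect the genuinely delicate points to be purely bookkeeping: (i) making the rewriting precise enough that every witnessing element lands in the \emph{same} graded component $P_n x^{t}$ — this is exactly what lets the freeness of $A$ as a left $P_n$-module transfer linear independence of the coefficients $g_S$ to linear independence in $A$ — while simultaneously keeping their lengths bounded by $2t$ so they all sit in one $V^{2t}$; and (ii) recording the harmless reduction to $\deg z\geq 1$. The super-increasing/distinct-subset-sum step and the "distinct degrees $\Rightarrow$ linearly independent" step are standard and should be stated in a line or two each.
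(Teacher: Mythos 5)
Your proof is correct and follows essentially the same route as the paper's: both exhibit $2^{\Theta(t)}$ elements of the form $\bigl(\prod_{i\in S}\sigma^{i}(z)\bigr)x^{t}$ inside a bounded power of a fixed finite-dimensional subspace and deduce their linear independence from the fact that the degrees $\sum_{i\in S}\deg(\sigma^{i}(z))$ are pairwise distinct, which is exactly what the hypothesis $r\geq 2$ guarantees (the paper gets this by a geometric-series contradiction, you by the super-increasing/distinct-subset-sum observation). Your explicit reduction to $\deg z\geq 1$ and the choice of $V$ containing $z$ merely tidy up points the paper leaves implicit.
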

\begin{proof}
   Suppose there exist $z\in D$ and $r\geq2$ such that \(
  \deg( \sigma^{m+1}(z))\geq r \deg (\sigma^{m}(z))
  \)
  for all $m\in\NNN$.
  Consider elements of $A$ of the form
  \[
  z^{\epsilon_0}xz^{\epsilon_1}x\cdots z^{\epsilon_{p-1}}xz^{\epsilon_p},
  \]
  where $p\in\NNN$ and each $\epsilon_i$ is either $0$ or $1$.
  There are $2^{p+1}$ such elements (denoted by $a_1,\ldots,a_{2^{p+1}})$ for each $p\in\NNN$.
  It is easy to see that $a_i\in V^{2p+1}$ where $V=k+kz_1+\cdots+ kz_n+kx$.

  \emph{Claim.} $a_1,\ldots,a_{2^{p+1}}$ are linearly independent for all $p\in \NNN$.

  \emph{Proof of the claim.}
  Suppose there exists %(infinitely many)
  $q\in\NNN$ such that $a_1,\ldots,a_{2^{q+1}}$ are linearly dependent.
  Note that
  \begin{align*}
    z^{\epsilon_0}xz^{\epsilon_1}x\cdots z^{\epsilon_{p-1}}xz^{\epsilon_q}
    &=z^{\epsilon_0}\sigma(z^{\epsilon_1})\sigma^2(z^{\epsilon_2})\cdots \sigma^q(z^{\epsilon_q}) x^q\\
    &=z^{\epsilon_0}(\sigma(z))^{\epsilon_1}(\sigma^2(z))^{\epsilon_2}\cdots (\sigma^q(z))^{\epsilon_q} x^q
\end{align*}
  and thus the degree with respect to $x$ of each $a_i$ is $q$.
  Write $a_i=t_ix^q$, $i=1,2,\ldots,2^{q+1}$, $t_i\in D$.
  Since $a_1,\ldots,a_{2^{q+1}}$ are linearly dependent,
  so are $t_1,\ldots,t_{2^{q+1}}$.
  Thus there exist $t_i$ and $t_j$ such that $\deg (t_i)=\deg (t_j)$.
  Namely,
  there are two sequences $(\epsilon_0,\ldots, \epsilon_q)\neq (\epsilon'_0,\ldots, \epsilon'_q)$, each $\epsilon_i,\epsilon'_i\in\{0,1\}$,
  such that
  \begin{align}\label{eqn-deg}
  \epsilon_0d_0+\dots+\epsilon_qd_q
  =\epsilon'_0d_0+\dots+\epsilon'_qd_q
  \end{align}
  where each $d_i:=\deg( \sigma^i(z))$.
  Let $l$ be the maximal index such that $\epsilon_l\neq \epsilon'_l$.
  Clearly $l\geq1$.
  Without loss of generality, assume $\epsilon_l=1$ and $\epsilon'_l=0$.
  Then Eq. (\ref{eqn-deg}) becomes
  \[
  \epsilon_0d_0+\dots+\epsilon_{l-1}d_{l-1}+d_l
  %=\epsilon'_0d_0+\dots+\epsilon'_ld_l
  =\epsilon'_0d_0+\dots+\epsilon'_{l-1}d_{l-1}.
  \]
  By hypothesis,
  $d_{m+1}\geq rd_m$ for all $m\in\NNN$,
  which implies that $d_i\leq (1/r)^{l-i}d_l$.
  Thus
  \begin{align*}
    d_l&=(\epsilon'_0-\epsilon_0)d_0+\dots+
  (\epsilon'_{l-1}-\epsilon_{l-1})d_{l-1}\\
  &\leq   d_0+\dots+d_{l-1}
  \leq \sum_{i=0}^{l-1} \left(\frac1r\right)^{l-i}d_l
  =\frac {1-\left(\frac1r\right)^{l}}{r-1}d_l
  <d_l,
 \end{align*}
 a contradiction.
 Hence the claim holds.

 It follows from the claim that
 $\dim(V^{2p+1})\geq 2^{p+1}$ for all $p\in \NNN$ and thus $A$ has exponential growth.
\end{proof}

\begin{convention}\upshape
  \label{conv-auto}
For an automorphism $\sigma:P_n\to P_n$ defined by $\sigma(z_i)=f_i\in P_n$, $1\leq i\leq n$,
we denote
$\sigma=(f_1, f_2, \dots,f_n)$.
\end{convention}
Recall that
an automorphism $\sigma=(f_1, f_2, \dots,f_n)\in\Aut(P_n)$ is said to be \emph{triangular}
if
$
f_i=\lambda_i z_i+g_i,
$
where $\lambda_i\in k^*$ for $1\leq i\leq n$, $g_i\in k[z_{i+1},\ldots,z_n]$ for $1\leq i<n$ and $g_n\in k$.

\begin{lemma}
  \label{lemma_ConLAlgebraic}
  Suppose that $\sigma\in\Aut(P_n)$ is conjugate to a triangular automorphism.
  Then $\sigma$ is locally algebraic.
\end{lemma}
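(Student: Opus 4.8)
\emph{Proof proposal.} The plan is to first note that local algebraicity is a conjugation invariant, and then prove the statement for triangular automorphisms by climbing the flag of subalgebras $R_j:=k[z_j,z_{j+1},\dots,z_n]$, $1\le j\le n+1$ (so that $R_{n+1}=k$ and $R_1=P_n$). For the reduction, suppose $\sigma=\eta^{-1}\tau\eta$ with $\tau$ triangular and $\eta\in\Aut(P_n)$; given a finite-dimensional subspace $V\subseteq P_n$, the subspace $\eta(V)$ is finite-dimensional, hence (granting the triangular case) lies in some finite-dimensional $\tau$-stable $U$, and then $\eta^{-1}(U)$ is finite-dimensional, $\sigma$-stable, and contains $V$. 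So I may assume $\sigma=\tau=(f_1,\dots,f_n)$ with $f_i=\lambda_i z_i+g_i$, $\lambda_i\in k^*$, $g_i\in k[z_{i+1},\dots,z_n]$ for $i<n$ and $g_n\in k$.

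Since $g_i\in R_{i+1}\subseteq R_j$ whenever $i\ge j$, we have $\tau(R_j)\subseteq R_j$; a downward induction on $i$, starting from $z_n=\lambda_n^{-1}(\tau(z_n)-g_n)$, shows each $z_i$ with $i\ge j$ lies in $\tau(R_j)$, so $\tau$ restricts to an automorphism of each $R_j$. By Lemma~\ref{lemma-LocAlgIff} applied to $R_j$, it suffices to prove that every $f\in R_j$ has $\{\tau^i(f):i\in\NNN\}$ contained in a finite-dimensional subspace, and I will do this by downward induction on $j$, the case $R_{n+1}=k$, $\tau=\id$, being trivial.

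For the inductive step, let $f\in R_j=R_{j+1}[z_j]$, say $f=\sum_{m=0}^d h_m z_j^m$ with $h_m\in R_{j+1}$. Iterating $\tau(z_j)=\lambda_j z_j+g_j$ gives $\tau^i(z_j)=\lambda_j^i z_j+p_i$, where $p_i=\sum_{l=0}^{i-1}\lambda_j^l\,\tau^{i-1-l}(g_j)\in R_{j+1}$ is a $k$-linear combination of $\tau$-iterates of $g_j$. By the inductive hypothesis applied to $g_j$, all of the elements $\tau^s(g_j)$ lie in a single finite-dimensional subspace $U_0\subseteq R_{j+1}$, hence $p_i\in U_0$ for every $i$; therefore $(\tau^i(z_j))^m=\sum_{t=0}^m\binom{m}{t}\lambda_j^{i(m-t)}z_j^{m-t}p_i^t$ lies in the fixed finite-dimensional subspace $\sum_{s=0}^d\sum_{t=0}^d k z_j^s U_0^t$. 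Applying the inductive hypothesis also to each $h_m$, choose finite-dimensional $U_m\subseteq R_{j+1}$ with $\{\tau^i(h_m):i\}\subseteq U_m$. Then $\tau^i(f)=\sum_{m=0}^d \tau^i(h_m)\,(\tau^i(z_j))^m$ lies, for every $i$, in the finite-dimensional space $\sum_{m=0}^d U_m\bigl(\sum_{s,t\le d}k z_j^s U_0^t\bigr)$, which is a finite sum of products of finite-dimensional subspaces. Taking $j=1$ then gives the lemma.

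I expect the one genuinely delicate point to be the observation that all the correction terms $p_i$ fall into a single finite-dimensional subspace $U_0$: this is exactly where the inductive hypothesis on $R_{j+1}$ is used, together with the fact that each $p_i$ is a $k$-linear combination of finitely many $\tau$-iterates of $g_j$. Everything else is routine bookkeeping with the binomial expansion and with products of finite-dimensional subspaces, which are again finite-dimensional.
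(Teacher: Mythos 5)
Your argument is correct, and its first half (the reduction to the triangular case via conjugation: $\eta(V)$ finite dimensional, hence inside a $\tau$-stable finite-dimensional $U$, hence $V\subseteq\eta^{-1}(U)$ which is $\sigma$-stable) is exactly the paper's proof. Where you genuinely diverge is the triangular case itself: the paper simply cites \cite[Proposition 2]{Leroy1988On} for the fact that a triangular automorphism of $P_n$ is locally algebraic, whereas you prove it from scratch by downward induction along the flag $R_j=k[z_j,\dots,z_n]$, using that $\tau$ preserves each $R_j$ and that the correction terms $p_i$ in $\tau^i(z_j)=\lambda_j^i z_j+p_i$ are $k$-linear combinations of the $\tau$-iterates of $g_j\in R_{j+1}$, so by the inductive hypothesis they all sit in one finite-dimensional $U_0\subseteq R_{j+1}$; the binomial bookkeeping then bounds the orbit of any $f\in R_j$ inside a fixed finite-dimensional space, and Lemma \ref{lemma-LocAlgIff} finishes. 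I checked the recursion $p_{i+1}=\lambda_j^i g_j+\tau(p_i)$ and the containments, and the step is sound (the restriction $\tau(R_{j+1})\subseteq R_{j+1}$ is what keeps the induction internal to $R_{j+1}$, and you verify it). The trade-off: the paper's route is shorter and defers the only nontrivial input to the literature; yours is self-contained and makes transparent exactly where triangularity is used, at the modest cost of a page of induction that essentially reproves the cited result of Leroy, Matczuk and Okni\'nski.
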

\begin{proof}
  Let $\delta, \tau\in \Aut(P_n)$ and $\tau=\delta^{-1}\sigma\delta$ be triangular.
Note that for every \fd\ subspace $U\subseteq P_n$,
the subset $\delta^{-1}(U)\subseteq P_n$ is also  \fd.
Since $\tau$ is triangular and
a triangular automorphism of $P_n$ is locally algebraic (\cite[Proposition 2]{Leroy1988On}),
we have that $\tau$ is locally algebraic.
Hence, there exists a \fd\ subspace $V_1\subseteq P_n$ such that
$\delta^{-1}(U)\subseteq V_1$ and $\tau(V_1)=V_1$.
Thus $\delta(V_1)$ is a \fd\ subspace of $P_n$ and $U\subseteq \delta(V_1)$.
Note that $\sigma(\delta(V_1))=\delta\tau\delta^{-1}(\delta(V_1))=\delta(V_1)$,
i.e.,
$\delta(V_1)$ is $\sigma$-stable.
Therefore $\sigma$ is locally algebraic.
\end{proof}

The converse of Lemma \ref{lemma_ConLAlgebraic} is true in the case $n=2$ (see Theorem \ref{thm_GKdichotomy}).
However it does not hold in general, see the following example.

\begin{example}
  \label{exam-Nagata}\upshape
  The Nagata automorphism (\cite{nagata1972automorphism})
  \[
  \left(
  x-2y(xz+y^2)-z(xz+y^2)^2, y+z(xz+y^2), z
  \right)
   \in \Aut(k[x,y,z])\]
is locally algebraic but not conjugate to a triangular automorphism (see \cite{bass1984non,furter2009quasi}).
\end{example}

\begin{corollary}
  \label{coro_PowerConjTri}
  Let $A=P_n(\sigma,a)$ be a GWA.
  If $\sigma^m$ is conjugate to a triangular automorphism for some $m\geq 1$,
  then $\GK(A)=n+1$.
\end{corollary}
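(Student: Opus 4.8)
The plan is to reduce the statement to the already-established locally algebraic case by passing to the subalgebra generated by the $m$-th powers of the two distinguished generators. Write $A=P_n(x,y;\sigma,a)$ and let $B$ be the subalgebra of $A$ generated by $P_n\cup\{x^m,y^m\}$. First I would invoke Lemma \ref{lemma_Power}, which says exactly that $B=P_n(x^m,y^m;\sigma^m,b)$ is again a GWA over $P_n$ --- now with defining automorphism $\sigma^m$ and defining element $b=\sigma^{-(m-1)}(a)\cdots\sigma^{-1}(a)a$ --- and, more importantly, that $\GK(A)=\GK(B)$. This is the step that converts the hypothesis about $\sigma^m$ into a hypothesis about the defining automorphism of a genuine GWA.

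Next I would feed the hypothesis into Lemma \ref{lemma_ConLAlgebraic}: since $\sigma^m\in\Aut(P_n)$ is conjugate to a triangular automorphism, that lemma gives that $\sigma^m$ is locally algebraic. At this point Lemma \ref{lem_GK-CA}(ii) applies verbatim to the GWA $B=P_n(\sigma^m,b)$ and yields $\GK(B)=\GK(P_n)+1$. Using the standard fact $\GK(P_n)=n$, this reads $\GK(B)=n+1$, and combining with $\GK(A)=\GK(B)$ from the previous paragraph finishes the argument: $\GK(A)=n+1$.

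I do not expect a real obstacle here; the corollary is essentially a two-line deduction from Lemmas \ref{lemma_Power}, \ref{lemma_ConLAlgebraic}, and \ref{lem_GK-CA}. The only conceptual point worth flagging is \emph{why} the passage to $m$-th powers is legitimate and lossless: $\GK$ is stable under the finite module extension $A\supseteq B$, so replacing $A$ by $B$ costs nothing, and this is precisely what allows the weaker hypothesis ``$\sigma^m$ conjugate to triangular for some $m\ge 1$'' to be just as good as ``$\sigma$ conjugate to triangular''. Note also that the argument makes no use of $a\neq 0$, so the statement holds in that degenerate case as well (then $b=0$, but neither Lemma \ref{lemma_Power} nor Lemma \ref{lem_GK-CA} requires the defining element to be nonzero).
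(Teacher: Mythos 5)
Your proposal is correct and follows essentially the same route as the paper's own proof: pass to the sub-GWA $B=P_n(x^m,y^m;\sigma^m,b)$ via Lemma \ref{lemma_Power}, use Lemma \ref{lemma_ConLAlgebraic} to get that $\sigma^m$ is locally algebraic, and conclude with Lemma \ref{lem_GK-CA}. Nothing further is needed.
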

\begin{proof}
  Let $D=P_n$, and $B$ and $b$ be as in Lemma \ref{lemma_Power}.
  Then $B=D(x^m,y^m;\sigma^m,b)$ and $\GK(A)=\GK(B)$ by Lemma \ref{lemma_Power}.
  By the assumption and Lemma \ref{lemma_ConLAlgebraic}, $\sigma^m$ is locally algebraic and thus $\GK(B)=n+1$
  by Lemma \ref{lem_GK-CA}.
  Hence $\GK(A)=n+1$.
\end{proof}

Now we consider the case $n=2$.
\begin{theorem}
  \label{thm_GKdichotomy}
  Let $A=k[z_1,z_2](\sigma,a)$ be a GWA.
  Then $\GK(A)=3$ if $\sigma$ is conjugate to a triangular automorphism, and $\GK(A)=\infty$ otherwise.
\end{theorem}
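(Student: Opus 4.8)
The plan is to get the ``if'' direction for free and to reduce the ``otherwise'' direction to Lemma~\ref{lem_GK-infty} using the Jung--van der Kulk structure of $\Aut(P_2)$. Put $D=P_2$, so $\GK(D)=2$. If $\sigma$ is conjugate to a triangular automorphism, then $\sigma$ is locally algebraic by Lemma~\ref{lemma_ConLAlgebraic}, hence $\GK(A)=\GK(D)+1=3$ by Lemma~\ref{lem_GK-CA}(ii); this is exactly the case $m=1$ of Corollary~\ref{coro_PowerConjTri}. So the substance is the converse: assuming $\sigma$ is \emph{not} conjugate to a triangular automorphism, I must prove $\GK(A)=\infty$.

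For the converse I would first observe, via Lemma~\ref{lemma_iso}(i), that replacing $\sigma$ by a conjugate $\tau^{-1}\sigma\tau$ (and $a$ by $\tau^{-1}(a)$) yields an isomorphic GWA, so I may freely pass to a conjugate of $\sigma$. By the Jung--van der Kulk theorem \cite{Jung1942Uber,vanderKulk1953}, $\Aut(P_2)$ is the amalgamated free product of the affine subgroup and the de Jonqui\`eres (triangular) subgroup over their intersection; consequently, up to conjugacy, $\sigma$ is affine, triangular, or a cyclically reduced word of length $\ge 2$ in this amalgam. The triangular possibility is ruled out by hypothesis, and an affine automorphism becomes triangular after its linear part is put in triangular form, so that possibility is ruled out as well. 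Hence I may assume $\sigma$ is a cyclically reduced word of length $\ge 2$, i.e.\ a composition of generalized H\'enon automorphisms.

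The next step is the degree estimate. For $\varphi\in\Aut(P_2)$ write $\deg(\varphi)=\max\{\deg\varphi(z_1),\deg\varphi(z_2)\}$. The normal-form/non-cancellation part of the amalgam structure gives $r:=\deg(\sigma)\ge 2$ and, crucially, no degree drop under iteration: $\deg(\sigma^m)=r^m$ for all $m\ge 0$. Choose $N$ with $r^N\ge 4$ (e.g.\ $N=2$, since $r\ge 2$), set $\rho:=\sigma^N$ and $z:=z_1z_2\in D$. Since $\deg\sigma^j(z_1)$ and $\deg\sigma^j(z_2)$ are each $\ge 1$ and their maximum is $r^j$, one has $r^j+1\le\deg(\sigma^j(z_1z_2))\le 2r^j$, so that $\deg(\rho^{m+1}(z))\ge r^{N(m+1)}+1\ge 2\cdot 2r^{Nm}\ge 2\deg(\rho^m(z))$ for every $m\in\NNN$. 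Then I would pass to the subalgebra $B$ of $A$ generated by $D\cup\{x^N,y^N\}$: by Lemma~\ref{lemma_Power}, $B=D(x^N,y^N;\rho,b)$ is a GWA with $\GK(A)=\GK(B)$, and Lemma~\ref{lem_GK-infty} applied to $B$ with this $z$ and $r=2$ shows $B$, hence $A$, has exponential growth; thus $\GK(A)=\infty$.

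The crux is the degree-stability $\deg(\sigma^m)=(\deg\sigma)^m$ for $\sigma$ not conjugate into a factor of the amalgam --- this is precisely the place where tameness of $\Aut(P_2)$ (Jung--van der Kulk) is indispensable, and it is the assertion that fails for $P_n$, $n\ge 3$ (compare the Nagata automorphism in Example~\ref{exam-Nagata} and Remark~\ref{remark-P2-P3}). The remaining work is purely bookkeeping: the hypothesis of Lemma~\ref{lem_GK-infty} must hold for \emph{all} $m\in\NNN$ rather than just eventually, and the passage to $\rho=\sigma^N$ (legitimated by Lemma~\ref{lemma_Power}) is exactly what absorbs the small values of $m$.
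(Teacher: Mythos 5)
Your proposal is correct and follows the same overall strategy as the paper's proof: reduce modulo conjugation via Lemma \ref{lemma_iso}(i), use the Jung--van der Kulk structure of $\Aut(P_2)$ to place a $\sigma$ that is not conjugate to a triangular automorphism into a H\'enon-type (cyclically reduced) form with multiplicative degree growth, and then invoke Lemma \ref{lem_GK-infty}. The differences are in execution. Where you quote the amalgam facts wholesale --- the conjugacy trichotomy and, crucially, $\deg(\sigma^m)=(\deg\sigma)^m$ for cyclically reduced words of length at least $2$ --- the paper instead cites Lane's lemma to obtain the explicit normal form $\sigma'=\tau_1\pi\tau_2\cdots\pi\tau_s\pi$ and proves the degree growth by hand, through an induction showing $\sigma'(z_2)=\lambda z_2^{d_1\cdots d_s}+(\text{lower terms})$; since $\sigma'^{m}$ is again of this form, this gives the exact value $\deg(\sigma'^m(z_2))=(d_1\cdots d_s)^m$ for the single element $z_2$, so Lemma \ref{lem_GK-infty} applies directly with $r=2$ and no further devices. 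Your substitute for that exactness --- taking $z=z_1z_2$, using $r^j+1\leq\deg(\sigma^j(z_1z_2))\leq 2r^j$, and passing to the sub-GWA generated by $D\cup\{x^N,y^N\}$ via Lemma \ref{lemma_Power} --- is a correct, if slightly more roundabout, way to satisfy the ``for all $m$'' hypothesis of Lemma \ref{lem_GK-infty}; so if you prefer a self-contained argument you would need to either reproduce the paper's leading-term induction or give a precise citation for the degree formula for reduced words, since that multiplicativity is exactly the nontrivial content here. One point to watch, shared by both arguments rather than a defect of yours: your disposal of the affine case by triangularizing the linear part needs $k$ algebraically closed (over a general field a linear automorphism with no eigenvalues in $k$ is not conjugate, even in $\Aut(P_2)$, to a triangular one), and the paper's appeal to Lane's lemma, proved over algebraically closed fields, carries the same caveat.
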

\begin{proof}
Let $D=k[z_1,z_2]$.
Suppose that $\sigma$ is conjugate to a triangular automorphism.
By Corollary \ref{coro_PowerConjTri},  $\GK(A)=3$.

Suppose that $\sigma$ is not conjugate to a triangular automorphism.
Let $\pi=(z_2,z_1)\in\Aut(D)$ (see Convention \ref{conv-auto}).
 By \cite[Lemma 3]{Lane1975Fixed},
 $\sigma$ is conjugate to an automorphism $\sigma'$ of the form
 \begin{align*}
   \label{eqn-square}
 \sigma'=\tau_1\pi\tau_2\cdots\pi\tau_s\pi,\
 s\geq 1,
 \end{align*}
 where,
 for each positive integer $i\leq s$,
 \[
 \tau_i=(\alpha_i z_1+\beta_i(z_2),u_iz_2+v_i),\
 \alpha_i,u_i\in k^*, v_i\in k,\
 \beta_i(z_2)\in k[z_2],
 d_i:=\deg(\beta_i)>1.
  \]
 We claim that
 \[
 \sigma'(z_2)=\lambda z_2^{d_1\cdots d_s}+\text{(lower total degree terms)}, \ \lambda\in k^*.
 \]
 We prove the claim by induction on $s$.
 If $s=1$,
 then $\sigma'(z_2)=\tau_1(z_2)=\alpha_1z_1+\beta_1(z_2)
 =\lambda z_2^{d_1}+\alpha_1z_1$.
 Thus the claim holds for $s=1$.
 Suppose the claim holds for automorphisms with $s\geq 1$.
 For $\sigma'=\tau_1\pi\tau_2\cdots\pi\tau_{s+1}\pi$,
 we have that
 \begin{align*}
   \sigma'(z_2)&=\tau_1\pi
   \left(\lambda z_2^{d_2d_3\cdots d_{s+1}}+T \right)\\
   &=\tau_1
   \left(\lambda z_1^{d_2d_3\cdots d_{s+1}} \right)+\tau_1\pi(T)\\
   &=\lambda (\alpha_1z_1+\beta_1(z_2))^{d_2d_3\cdots d_{s+1}}+\tau_1\pi(T)\\
   &=\lambda' z_2^{d_1d_2\cdots d_{s+1}}+T' +\tau_1\pi(T)
 \end{align*}
 where $\lambda,\lambda'\in k^*$,
$\deg (T)<d_2d_3\cdots d_{s+1}$
  and $\deg (T')<d_1d_2\cdots d_{s+1}$.
 Since $\deg (\tau_1\pi(T))=\deg (\tau_1(T))<d_1d_2\cdots d_{s+1}$,
 we have that
 \[
 \sigma'(z_2)=\lambda' z_2^{d_1d_2\cdots d_{s+1}}+\text{ (lower total degree terms)}
  \]
  as desired.

 It follows from the claim that
\begin{align*}
 \deg (\sigma'^{m+1}(z_2))&= \deg((\tau_1\pi\tau_2\cdots\pi\tau_s\pi)^{m+1}(z_2))\\
 &=(d_1\cdots d_s)^{m+1}\\
 &\geq (d_1\cdots d_s)^{m}=2\deg (\sigma'^{m}(z_2)).
\end{align*}
Now, by Lemma \ref{lem_GK-infty},
$D(\sigma',b)$ has exponential growth and
$\GK(D(\sigma',b))=\infty$ for all $b\in k[z_1,z_2]$.
Since $\sigma$ is conjugate to $\sigma'$,
we may suppose $\sigma=\delta^{-1}\sigma'\delta$ for some $\delta\in \Aut(k[z_1,z_2])$.
Then, by Lemma \ref{lemma_iso}, we have $k[z_1,z_2](\sigma,a)\cong k[z_1,z_2](\sigma',\delta^{-1}(a))$.
Hence $\GK(D(\sigma,a))=\GK(D(\sigma',\delta^{-1}(a)))=\infty$.
\end{proof}

\begin{remark}\upshape
  \label{remark-P2-P3}
Note that a key element in
the proof of Theorem \ref{thm_GKdichotomy} is \cite[Lemma 3]{Lane1975Fixed},
which is based on the famous theorem of Jung and van der Kulk \cite{Jung1942Uber,vanderKulk1953} that asserts
that $\Aut(P_2)$ is generated by triangular automorphisms and
the permutation $\pi=(z_2,z_1)$,
that is,
all $\sigma\in\Aut(P_2)$ can be written as a product of triangular automorphisms (and their inverses) and $\pi$.
Shestakov and Umirbaev \cite{Shestakov2003}
proved that there exist automorphisms $\tau\in\Aut(P_3)$ that are not tame.
As a result,
such $\tau$ cannot be presented as a product of triangular automorphisms (and their inverses) and affine automorphisms (i.e., automorphisms $\delta=(f_1,f_2, f_3)\in\Aut(P_3)$ with $\deg(f_i)=1$ for $i=1,2,3$) of $P_3$.
\end{remark}

\

We can now put everything together to prove Theorem \ref{thm-mainP}.

\noindent
\emph{\textbf{Proof of Theorem \ref{thm-mainP}.}}
(\ref{itemP-PolyGrowth}) $\Rightarrow$
(\ref{itemP-NotExGrowth})
and (\ref{itemP-ConjTri}) $\Rightarrow$
(\ref{itemP-PowerConjTri})
are trivial.
(\ref{itemP+1}) $\Leftrightarrow$
(\ref{itemP-GKfinite}) $\Leftrightarrow$ (\ref{itemP-LA})
%$\Leftrightarrow$ (\ref{itemP-ConjTri})
follow from Theorem \ref{thm_GKdichotomy} and
Proposition \ref{prop-Zhang97PL} respectively.
(\ref{itemP-NotExGrowth}) $\Rightarrow$ (\ref{itemP-ConjTri})
follows from the proof of Theorem \ref{thm_GKdichotomy}.
(\ref{itemP-PowerConjTri}) $\Rightarrow$ (\ref{itemP+1})
follows from Corollary \ref{coro_PowerConjTri}.

(\ref{itemP-LA}) $\Rightarrow$ (\ref{itemP-PolyGrowth}).
Suppose $\sigma$ is locally algebraic.
Let $V$ be a $\sigma$-stable generating subframe of $P_2$ such that $a\in V$ (and thus $\sigma(a)\in V$).
Then $W:=V+kx+ky$ is a generating subframe of $A$.
Since
  \[
  W^{2m}=(V+kx+ky)^{2m}\supseteq V^m+V^mx+\cdots+V^mx^m, m\geq1,
  \]
we have that
\begin{align}
  \label{eqn-Lower}
  \dim(W^{2m})> (m+1)\dim(V^m), m\geq1.
\end{align}
On the other hand,
we claim that
  \begin{align}\label{claim_W^n}
    W^m\subseteq\sum_{i=1}^mV^{m}x^i+\sum_{i=1}^mV^{m}y^i+V^{m}, \ m\geq 1.
  \end{align}
The claim is obvious when $m=1$.
Suppose that the inclusion (\ref{claim_W^n}) holds for some $m\geq 1$. Then
 \begin{align*}
   W^{m+1}&\subseteq W\left(\sum_{i=1}^mV^{m}x^i+\sum_{i=1}^mV^{m}y^i+V^{m}\right)\ \ (\text{induction hypothesis})\\
   &=kx\sum_{i=1}^mV^{m}x^i+kx\sum_{i=1}^mV^{m}y^i+kxV^{m}\\
   &\ \ \ +ky\sum_{i=1}^mV^{m}x^i+ky\sum_{i=1}^mV^{m}y^i+kyV^{m}\\
 &\ \ \ \ \ \ +V\sum_{i=1}^mV^{m}x^i+V\sum_{i=1}^mV^{m}y^i+VV^{m}\\
 &=\sum_{i=1}^mV^{m}x^{i+1}+ \sum_{i=1}^mV^{m}\sigma(a)y^{i-1}+V^{m}x\ \ [\text{since } xV^{m}=V^{m}x]\\
   &\ \ \ +\sum_{i=1}^mV^{m}ax^{i-1}+\sum_{i=1}^mV^{m}y^{i+1}+V^{m}y\ \ [\text{since } yV^{m}=V^{m}y]\\
 &\ \ \ \ \ \ +\sum_{i=1}^mV^{m+1}x^i+\sum_{i=1}^mV^{m+1}y^i+V^{m+1}\\
 &\subseteq\sum_{i=1}^{m+1}V^{m+1}x^i+\sum_{i=1}^{m+1}V^{m+1 }y^i+V^{m+1 }\ \ [\text{since } a,\sigma(a)\in V].
  \end{align*}
Hence the inclusion (\ref{claim_W^n}) holds for  $m+1$ and thus
for all $m\geq 1$.

By the $\ZZ$-graded structure of GWAs, the sum in Eq. (\ref{claim_W^n}) is direct and thus
\begin{align}
  \label{eqn-Upper}
  \dim(W^m)\leq (2m+1)\dim(V^{m}), m\geq1.
\end{align}
Combining Eq. (\ref{eqn-Lower}) and (\ref{eqn-Upper}) gives
\[
(m+1)\dim(V^m)\leq \dim(W^{2m})\leq (4m+1)\dim(V^{2m}), m\geq 1.
\]
Since the growth of $P_2$ is independent from the choice of generating subspaces,
$\dim(V^m)\sim \dim(kz_1+kz_2)^m\sim m^2$.
We obtain that $\dim(W^m)\sim m^3$ and thus $A$ has polynomial growth.
\begin{figure}[h!]
\begin{align*}
  \xymatrix{
  (a) \ar@2{<-}[d]_{\ref{coro_PowerConjTri}} \ar@2{<->}[r]^{\ref{thm_GKdichotomy}}
                & (b) \ar@2{<->}[r]^{\ref{prop-Zhang97PL}}
                &  (e)\ar@2{->}[r]
                & (c)  \ar@2{->}[d] \\
  (g)  \ar@2{<-}[r]
                & (f)
                &
                &(d)  \ar@2{->}[ll]_{\text{Proof of } \ref{thm_GKdichotomy}}   }
\end{align*}
\caption{Implication relations among the items in Theorem \ref{thm-mainP}}
\label{fig}
\end{figure}
Figure \ref{fig} summarizes the relations we have proved among the items in Theorem \ref{thm-mainP},
which completes the proof.
\hfill \qed

\

We conclude this section with an example,
which shows that the dichotomy of GK-dimension appears in Theorem \ref{thm_GKdichotomy} (i.e., $\GK(P_2(\sigma,a))$ is either $3$ or $\infty$)
cannot be extended to that of GWAs over a general base algebra of GK-dimension two.
Note that
it is proved by Rogalski \cite{rogalski2009gk} that
if $k$ is algebraically closed and $K/k$ is a finitely generated field extension with
$\GK(K)=2$,
then every ``big subalgebra" \cite[Definition 6.1]{rogalski2009gk} of the GWA $K(x,y;\sigma,1)=K[x,x^{-1};\sigma]$
has the same GK-dimension $d\in\{3,4,5,\infty\}$.
\begin{example}
  \label{exam_Heisenberg}\upshape
  Let
  \[
  H_m=H_m(\alpha_1,\alpha_2):=k[z_1^{\pm1},z_2^{\pm1}](x,y;\sigma_m,a)
  \]
  be a GWA, where
  \[
  \sigma_m(z_1)=\alpha_1z_1, \sigma_m(z_2)=\alpha_2z_1^mz_2, \
  \alpha_1,\alpha_2\in k^*, m\in\NNN.
  \]
  Then $\GK(H_m)=4$ for all $m\neq 0$ and $\GK(H_0)=3$.
\end{example}
\begin{proof}
Without loss of generality,
we assume $\alpha_1=\alpha_2=1$.
In fact,  since $V=k\{z_1^{\pm1},z_2^{\pm1}, x,y\}$ is a generating subspace of both $H_m(\alpha_1,\alpha_2)$ and $H_m(1,1)$,
we have that $\GK(H_m(\alpha_1,\alpha_2))=\GK(H_m(1,1))$.
We may also suppose $m\geq 0$, since
$H_m\cong H_{-m}$ via the isomorphism determined by
$z_1\mapsto z_1^{-1}$, $z_2\mapsto z_2$, $x\mapsto x$, $y\mapsto y$.

It is clear that $\sigma_0$ is locally algebraic and thus
$\GK(H_0)=3$. Now assume $m\geq1$.
Note that $H_1$ is the group algebra of the \emph{first (discrete) Heisenberg group}
(also known as the \emph{generic quantum torus} \cite[Example 2.5(ii)]{jordan2009reversible}).
It is well-known that $\GK(H_1)=4$ (see e.g. \cite[Example 11.10]{Krause-Lenagan_2000}).
 By Lemma \ref{lemma_Power},
  the subalgebra $B=k[z_1^{\pm1},z_2^{\pm1}](x^m,y^m;\sigma_1^m,1)\subseteq H_1$ is a GWA and $\GK(B)=\GK(H_1)=4$.
  By the universal property of GWAs (Proposition \ref{prop-universal}), the mappings
  \[
  \psi:H_m\to B, z_1\mapsto z_1, z_2\mapsto z_2, x\mapsto x^m, y\mapsto y^m
  \]
  and
  \[
  \psi':B\to H_m, z_1\mapsto z_1, z_2\mapsto z_2, x^m\mapsto x, y^m\mapsto y
  \]
  determine two algebra homomorphisms such that
  $\psi\psi'=\id_{B}$ and $\psi'\psi=\id_{H_m}$.
  Hence $H_m\cong B$ and thus $\GK(H_m)=\GK(B)=4$.
\end{proof}

%%%%%%%%%%%%%%%%%%%%%%%%%%%%
\section{GK-dimension of GWAs over Laurent polynomial algebras}
\label{sce-Laurent}
In this section, we first characterize the automorphisms of Laurent polynomial algebra
$$
L_n=k[z_1^{\pm1},z_2^{\pm1},\dots,z_n^{\pm1}]
$$
 and then investigate the GK-dimension of GWAs over $L_n$.
Finally we prove Theorem \ref{thm-mainL} and give an application on cancellation problem.

%\subsection{Automorphisms of  Laurent polynomial algebras}
Denote by $\ZZ^{m\times n}$ (respectively $\GL(n,\ZZ)$) the set of all $m\times n$ integer matrices (respectively the general linear group of degree $n$ over $\ZZ$).
%Let $\vv0_n$ and $I_n$ (or simply $\vv0$ and $I$ respectively) be the zero matrix and the identity matrix in $\ZZ^{n\times n}$ respectively.
%Note that $\det M=\pm1$ for all $M\in\GL(n,\ZZ)$.
For $M=(a_{ij})\in\ZZ^{n\times n}$, let $M[i,j]:=a_{ij}$ be the $(i,j)$-th element of $M$,
$M[i,-]:=(a_{i1},a_{i2},\dots,a_{in})$ be the $i$-th row of $M$,
and $M[-,j]:=(a_{1j},a_{2j},\dots,a_{nj})^T$ be the $j$-th {column} of $M$,
 where $T$ stands for the transpose operation.

{It is well-known} that the unit group of $L_n$ is
\[
U(L_n)=\{
\alpha z_1^{i_1}z_2^{i_2}\cdots z_n^{i_n}:\alpha\in k^*, i_1,i_2,\dots,i_n\in \ZZ\}.
\]
Since an automorphism $\sigma\in\Aut(L_n)$ sends a unit to a unit,
we have
\begin{align} \label{eqn_auto}
\sigma(z_i)=\alpha_iz_1^{a_{1i}}z_2^{a_{2i}}\cdots z_n^{a_{ni}},
 \ \alpha_i\in k^*, a_{1i},a_{2i},\dots, a_{ni}\in\ZZ,\
i=1,2,\dots,n.
\end{align}
The matrix
$M=M_{\sigma}=(a_{ij})\in \ZZ^{n\times n}$
(respectively, the scalar $\alpha=\alpha_{\sigma}:=(\alpha_1,\alpha_2,\dots,\alpha_n)\in (k^*)^n$)
is uniquely determined by the automorphism $\sigma$ and we write $\sigma=(M,\alpha)$.
%Call $\sigma$ a \emph{scalar automorphism} (respectively \emph{monomial automorphism}) if
%$M_{\sigma}=I$ (respectively $\alpha_{\sigma}=(1,1,\dots,1)$).
We introduce the following convention.
\begin{convention}\upshape
  \label{conv-a^b}
Denote $z:=(z_1,z_2,\dots,z_n)$.
Given rows $\alpha=(\alpha_1,\alpha_2,\dots,\alpha_n)\in L_n^n$ and
$\beta=(\beta_1,\beta_2,\dots,\beta_n)\in L_n^n$,  a column $b=(b_1,b_2,\dots,b_n)^T\in \ZZ^{n\times 1}$,
and a matrix $M\in \ZZ^{n\times n}$,
we use the following notation:
\begin{align*}
  \alpha^b&:=
\alpha_1^{b_1}\alpha_2^{b_2}\dots \alpha_n^{b_n}\in L_n,\\
\alpha^M&:=
(\alpha^{M[-,1]},\alpha^{M[-,2]},\dots,\alpha^{M[-,n]})\in L_n^n,\\
\alpha \beta&:=(\alpha_1\beta_1,\alpha_2\beta_2,\dots,\alpha_n\beta_n)\in L_n^n.
\end{align*}
\end{convention}
In particular, the element-wise multiplication
\[
\alpha\beta=(\alpha_1\beta_1,\alpha_2\beta_2,\dots,\alpha_n\beta_n),\
\forall \alpha,\beta\in (k^*)^n
\]
gives a group structure on $(k^*)^n$.

With the Convention \ref{conv-a^b}, we can write the automorphism $\sigma=(M,\alpha)$ given in Eq. (\ref{eqn_auto}) as
\begin{align}\label{eqn_auto2}
  \sigma(z_i)&=\alpha_iz_1^{M[1,i]}z_2^{ M[2,i]}\cdots z_n^{ M[n,i]}=
 \alpha_i z^{M[-,i]},\ \ i=1,2,\dots,n
\end{align}
or
\begin{align}
\label{eqn_sigma(z)}
  \sigma(z):=(\sigma(z_1),\sigma(z_2),\dots,\sigma(z_n))
  =(\alpha_1z^{M[-,1]},\alpha_2z^{M[-,2]},\dots,\alpha_nz^{M[-,n]})
  =\alpha z^M.
\end{align}

\begin{lemma}\label{lemma_exponent}
Suppose $\alpha,\beta,\gamma\in {L_n^n}$ and
$M,N\in \ZZ^{n\times n}$.
Then
\begin{enumerate}[(i)]
%  \item
%  $\alpha^I=\alpha$.
%  \item
%  $\vv1^{M}=\vv1=\alpha^{\vv0}$.
  \item
  $(\alpha\beta)^M=\alpha^M\beta^M$.
  \item
  $(\alpha^M)^N=\alpha^{MN}$.
  \item
  $\alpha^M\alpha^N=\alpha^{M+N}$.
%  \item
%  $(\alpha\beta)\gamma=\alpha(\beta\gamma)$.
\end{enumerate}
\end{lemma}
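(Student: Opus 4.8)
The plan is to prove all three identities by a direct entrywise computation, reducing each to the elementary exponent laws in the commutative ring $L_n$. I would begin by recording the one point that needs care: negative integer exponents occur, so the entries being raised to powers must be units; I would therefore note at the outset that all the products written below are taken in the unit group $U(L_n)$ (in the intended application $\alpha,\beta\in(k^*)^n$, so this is automatic). With that in place, I fix an index $i$ and recall from Convention \ref{conv-a^b} that the $i$-th entry of $\alpha^M$ is $\alpha^{M[-,i]}=\prod_{j=1}^n\alpha_j^{M[j,i]}$.

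For (i) and (iii) the computations are immediate. The $i$-th entry of $(\alpha\beta)^M$ is $\prod_{j=1}^n(\alpha_j\beta_j)^{M[j,i]}=\bigl(\prod_{j}\alpha_j^{M[j,i]}\bigr)\bigl(\prod_{j}\beta_j^{M[j,i]}\bigr)$, which is the $i$-th entry of $\alpha^M\beta^M$; this uses only $(uv)^m=u^mv^m$ and commutativity of $L_n$. Likewise, the $i$-th entry of $\alpha^M\alpha^N$ is $\bigl(\prod_{j}\alpha_j^{M[j,i]}\bigr)\bigl(\prod_{j}\alpha_j^{N[j,i]}\bigr)=\prod_{j}\alpha_j^{M[j,i]+N[j,i]}=\prod_{j}\alpha_j^{(M+N)[j,i]}$, the $i$-th entry of $\alpha^{M+N}$; here I only use $u^mu^{m'}=u^{m+m'}$.

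The one step with any bookkeeping is (ii), the compatibility of the exponent notation with matrix multiplication. Fixing $i$, the $i$-th entry of $(\alpha^M)^N$ is $\prod_{l=1}^n\bigl((\alpha^M)_l\bigr)^{N[l,i]}=\prod_{l=1}^n\bigl(\prod_{j=1}^n\alpha_j^{M[j,l]}\bigr)^{N[l,i]}=\prod_{l=1}^n\prod_{j=1}^n\alpha_j^{M[j,l]N[l,i]}=\prod_{j=1}^n\alpha_j^{\sum_{l=1}^nM[j,l]N[l,i]}$, and since $\sum_{l}M[j,l]N[l,i]=(MN)[j,i]$ this equals $\prod_{j}\alpha_j^{(MN)[j,i]}=\alpha^{(MN)[-,i]}$, the $i$-th entry of $\alpha^{MN}$. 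Since $i$ was arbitrary, the three identities follow. I do not anticipate any real obstacle: the content is exactly the interplay of matrix arithmetic with the laws of exponents, and the only genuine caveat is the units issue noted at the outset, which is why one works inside $U(L_n)$.
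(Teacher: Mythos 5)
Your proof is correct and follows the same route as the paper, which simply declares (i) and (iii) obvious and (ii) a direct check; you have carried out exactly that entrywise computation. Your remark that negative matrix entries force the components of $\alpha,\beta$ to lie in $U(L_n)$ is a sensible precision (and is automatic in the paper's applications, where the entries are scalars in $k^*$ or Laurent monomials).
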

\begin{proof}
  (i) and (iii) are obvious.
  (ii) can be checked directly.
\end{proof}

\begin{lemma}
  \label{lemma_composition}
  Suppose $\sigma=(M,\alpha), \tau=(N,\beta)\in \Aut(L_n)$.
  Then
  \begin{enumerate}[(i)]
   \item
    $\tau\sigma=(N,\beta)(M,\alpha)=(NM,\beta^M\alpha).$
    \item
    $M$ is invertible and $\sigma^{-1}= (M^{-1},\alpha^{-M^{-1}})$.
  \end{enumerate}
\end{lemma}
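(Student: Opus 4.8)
The plan is to compute the action of the composite automorphisms on each generator $z_i$ and match the result against the normal form $\sigma = (M,\alpha)$ encoded in Eq. \eqref{eqn_sigma(z)}, i.e.\ $\sigma(z) = \alpha z^M$. For part (i), I would start from $\tau(z) = \beta z^N$ and $\sigma(z) = \alpha z^M$, and compute $(\tau\sigma)(z) = \tau(\sigma(z))$. Since $\sigma(z_i) = \alpha_i z^{M[-,i]}$ and $\tau$ is a $k$-algebra homomorphism, applying $\tau$ replaces each $z_j$ inside by $\tau(z_j) = \beta_j z^{N[-,j]}$, giving $(\tau\sigma)(z_i) = \alpha_i \prod_{j=1}^n \tau(z_j)^{M[j,i]} = \alpha_i \prod_{j=1}^n (\beta_j z^{N[-,j]})^{M[j,i]}$. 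Using the notation of Convention \ref{conv-a^b}, this is $\alpha_i (\beta z^N)^{M[-,i]} = \alpha_i \beta^{M[-,i]} (z^N)^{M[-,i]}$. By Lemma \ref{lemma_exponent}(ii) applied with the column $M[-,i]$ (or more cleanly by writing $\tau\sigma(z) = \tau(\alpha z^M) = \alpha\,\tau(z)^M = \alpha(\beta z^N)^M = \alpha\beta^M (z^N)^M = \alpha\beta^M z^{NM}$), we get $(\tau\sigma)(z) = (\beta^M\alpha) z^{NM}$, which is exactly the automorphism $(NM, \beta^M\alpha)$. Hence $\tau\sigma = (NM, \beta^M\alpha)$.

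For part (ii), invertibility of $M$ follows because $\sigma$ is an automorphism: there exists $\sigma^{-1} \in \Aut(L_n)$, say $\sigma^{-1} = (N,\beta)$, and then $\sigma\sigma^{-1} = \id = (I_n, (1,\dots,1))$ forces, by part (i), $MN = I_n$, so $M \in \GL(n,\ZZ)$ with $M^{-1} = N$. To pin down the scalar part $\beta$, I would use part (i) again on $\sigma\sigma^{-1} = \id$: the matrix identity gives $M N = I_n$ so $N = M^{-1}$, and the scalar identity gives $\alpha^{N}\beta = (1,\dots,1)$, i.e.\ $\beta = \alpha^{-N} = \alpha^{-M^{-1}}$, where the inverse is taken in the group $(k^*)^n$ under element-wise multiplication (and we use Lemma \ref{lemma_exponent}(i) to make sense of $\alpha^{-N} = (\alpha^{-1})^{N}$). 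Therefore $\sigma^{-1} = (M^{-1}, \alpha^{-M^{-1}})$. One should double-check consistency by also computing $\sigma^{-1}\sigma$ via part (i): this yields matrix part $M^{-1}M = I_n$ and scalar part $(\alpha^{-M^{-1}})^{M}\alpha = \alpha^{-M^{-1}M}\alpha = \alpha^{-I_n}\alpha = (1,\dots,1)$ by Lemma \ref{lemma_exponent}(ii)–(iii), confirming the formula.

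The computations here are entirely routine once the exponent calculus of Lemma \ref{lemma_exponent} is in hand; the only mild subtlety — the part I would be most careful about — is bookkeeping the order of composition and which of $\alpha$, $\beta$ gets raised to which matrix power. Because multiplication in $\Aut(L_n)$ is written with $\tau\sigma$ meaning "first $\sigma$, then $\tau$", the outer automorphism $\tau$ acts on the already-transformed variables, so it is $\beta$ (the scalar part of $\tau$) that picks up the exponent $M$ (the matrix part of $\sigma$), giving $\beta^M\alpha$ and not $\alpha^N\beta$; verifying this against a small explicit case (e.g.\ $n=1$, where $\sigma(z) = \alpha z^m$, $\tau(z) = \beta z^{\ell}$, and $\tau\sigma(z) = \beta(\alpha z^m)^\ell = \beta\alpha^\ell z^{m\ell}$, matching $(\ell m, \beta\alpha^{\ell}) = (NM,\beta^M\alpha)$) is a good sanity check. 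No other obstacle is expected.
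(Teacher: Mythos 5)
Your main argument is correct and is essentially the paper's proof: compute $(\tau\sigma)(z_i)=\tau(\sigma(z_i))$ on generators, simplify with the exponent calculus of Lemma \ref{lemma_exponent} and Convention \ref{conv-a^b} to get $(\tau\sigma)(z)=\beta^{M}\alpha\,z^{NM}$, and then deduce (ii) from (i) (matrix part of $\sigma\sigma^{-1}=\id$ forces $MN=I_n$, hence $M\in\GL(n,\ZZ)$; scalar part forces $\beta=\alpha^{-M^{-1}}$), which is exactly how the paper's ``(ii) follows from (i) and Lemma \ref{lemma_exponent}'' is meant.

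One small correction to your closing sanity check, which does not affect the proof: with the convention you use throughout, $(\tau\sigma)(z)=\tau(\sigma(z))$, the $n=1$ computation is $\tau\sigma(z)=\tau(\alpha z^{m})=\alpha(\beta z^{\ell})^{m}=\alpha\beta^{m}z^{\ell m}$, whose scalar is $\beta^{M}\alpha$ as claimed. The expression you wrote, $\beta(\alpha z^{m})^{\ell}=\beta\alpha^{\ell}z^{m\ell}$, is $\sigma(\tau(z))$, i.e.\ the composite in the opposite order, and its scalar $\beta\alpha^{\ell}=\alpha^{N}\beta$ is precisely the formula you (correctly) said should not occur; the two differ already for $\alpha=1$, $\beta\neq1$, $m=2$, $\ell=1$. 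So the check as written contradicts your own correct main computation; redone as above it does confirm $(NM,\beta^{M}\alpha)$.
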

\begin{proof}
(i) Suppose  $C=NM$.
By Eq. (\ref{eqn_auto2}), it is routine to check that, for $1\leq i\leq n$,
$(\tau\sigma)(z_i)=\beta^{M[-,i]}\alpha_iz^{C[-,i]}.$
Thus
$\tau\sigma(z)=\beta^M\alpha z^{NM}$ and
$\tau\sigma=(NM,\beta^M\alpha)$.

(ii) follows from (i) and Lemma \ref{lemma_exponent}.
\end{proof}

The following theorem is well known,
see for instance \cite[Lemma 3.6(3)]{CPWZ15} or \cite[Exercise 5.19 (b)]{BG09}.

\begin{theorem}
  \label{thm_auto(L)}
  Let $L_n=k[z_1^{\pm1},z_2^{\pm1},\dots,z_n^{\pm1}]$.
  Then \(
  \Aut(L_n)\cong \GL(n,\ZZ)\ltimes (k^*)^n
  \) as groups,
  where the group operation of the semiproduct is given by Lemma \ref{lemma_composition} (i).
\end{theorem}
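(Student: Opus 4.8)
The plan is to construct an explicit isomorphism of groups between $\Aut(L_n)$ and the semidirect product $\GL(n,\ZZ)\ltimes (k^*)^n$, using the parametrization $\sigma = (M_\sigma,\alpha_\sigma)$ already set up in Equations (\ref{eqn_auto})--(\ref{eqn_sigma(z)}). First I would verify that the map $\Psi:\Aut(L_n)\to \ZZ^{n\times n}\times (k^*)^n$ sending $\sigma\mapsto (M_\sigma,\alpha_\sigma)$ is well-defined and injective: this is essentially done in the excerpt, since every $\sigma\in\Aut(L_n)$ must send each $z_i$ to a unit of $L_n$, units of $L_n$ are exactly the monomials $\alpha z_1^{i_1}\cdots z_n^{i_n}$ with $\alpha\in k^*$, and $\sigma$ is determined by the images $\sigma(z_1),\dots,\sigma(z_n)$, so the pair $(M_\sigma,\alpha_\sigma)$ determines $\sigma$ uniquely. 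Next I would check that $\Psi$ is a homomorphism with respect to the semidirect product structure whose operation is spelled out in Lemma \ref{lemma_composition}(i): for $\sigma=(M,\alpha)$, $\tau=(N,\beta)$ we have $\tau\sigma = (NM,\beta^M\alpha)$, which is precisely the semidirect product multiplication rule, so $\Psi(\tau\sigma)=\Psi(\tau)\Psi(\sigma)$ in $\GL(n,\ZZ)\ltimes(k^*)^n$.

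The remaining point is surjectivity of $\Psi$ onto $\GL(n,\ZZ)\ltimes(k^*)^n$, which splits into two halves. On one hand, for any $M\in\ZZ^{n\times n}$ and $\alpha\in(k^*)^n$ the assignment $z_i\mapsto \alpha_i z^{M[-,i]}$ defines a $k$-algebra endomorphism $\sigma$ of $L_n$ (monomials are units, so this is well-defined on the Laurent polynomial ring). On the other hand, I must show this endomorphism is actually an automorphism exactly when $M\in\GL(n,\ZZ)$. For the ``if'' direction: if $M$ is invertible over $\ZZ$, then by Lemma \ref{lemma_exponent} the endomorphism $\tau$ given by $z_i\mapsto \alpha^{-M^{-1}}_i z^{M^{-1}[-,i]}$ satisfies $\tau\sigma = \sigma\tau = \id$ (this is exactly the computation behind Lemma \ref{lemma_composition}(ii)), so $\sigma\in\Aut(L_n)$ and $\Psi(\sigma)=(M,\alpha)$. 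For the ``only if'' direction, which is needed to see that $\Psi$ lands in $\GL(n,\ZZ)\ltimes(k^*)^n$ rather than some larger set: if $\sigma$ is an automorphism, composing with $\Psi(\sigma^{-1})=(N,\beta)$ and using that $\Psi$ is a homomorphism gives $(NM,\ast)=\Psi(\id)=(I_n,\mathbf{1})$, so $NM=I_n$ and hence $M\in\GL(n,\ZZ)$.

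Assembling these pieces, $\Psi$ is a bijective homomorphism from $\Aut(L_n)$ to $\GL(n,\ZZ)\ltimes(k^*)^n$, which is the desired isomorphism. I expect the only mild subtlety to be the bookkeeping of the semidirect product action: one must be careful that the action of $\GL(n,\ZZ)$ on $(k^*)^n$ implicit in the operation $(N,\beta)(M,\alpha)=(NM,\beta^M\alpha)$ is a genuine left action, i.e. that $(\beta^M)^N$ matches $\beta^{MN}$ with the correct ordering — this is precisely Lemma \ref{lemma_exponent}(ii), so no real obstacle arises. Since the theorem is stated as well known with references to \cite{CPWZ15} and \cite{BG09}, I would keep the writeup short, citing Lemmas \ref{lemma_exponent} and \ref{lemma_composition} for the computational content and only emphasizing the surjectivity argument, which is the one place where invertibility of $M$ over $\ZZ$ (as opposed to over $\QQ$ or $k$) genuinely enters.
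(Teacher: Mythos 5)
Your proposal is correct and follows essentially the same route as the paper: parametrize each $\sigma\in\Aut(L_n)$ by the pair $(M_\sigma,\alpha_\sigma)$ via the unit group of $L_n$, use Lemma \ref{lemma_composition} for the composition and inverse formulas, and verify that the resulting operation is the semidirect product multiplication on $\GL(n,\ZZ)\ltimes(k^*)^n$. The only difference is one of detail, not of method: the paper compresses bijectivity into a citation of Lemma \ref{lemma_composition}(ii) and calls the semidirect-structure check routine, whereas you spell out surjectivity (including why invertibility of $M_\sigma$ over $\ZZ$ is exactly what is needed) and the left-action compatibility via Lemma \ref{lemma_exponent}(ii).
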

\begin{proof}
It follows from Lemma \ref{lemma_composition} (ii) that the mapping
  \[
  f:\Aut(L_n)\to \GL(n,\ZZ)\times (k^*)^n,\ \sigma\mapsto(M_{\alpha},\alpha_{\sigma})
  \]
  is well-defined and bijective.
It is routine to check that
 $\GL(n,\ZZ)\times (k^*)^n$ has a semidirect structure with
the product given by Lemma \ref{lemma_composition} (i).
\end{proof}

%%%%%%%%%%%%%%
%\subsection{GK-dimension of GWAs over $L_n$}
\begin{definition}
Suppose $M\in\ZZ^{n\times n}$ and
\[
\|M\|:=\max_{1\leq i,j\leq n} |M[i,j]|.
\]
\begin{enumerate}[(i)]
  \item
  $M$ is called \emph{power-bounded} (by $N$)
if there exists $N\in\NNN^*$ such that $\|M^m\|<N$
for all $m\in \NNN$.
  \item
  $M$ has a \emph{finite order} if $M^m=I$ for some $m\in\NNN^*$, and the smallest such $m$ is called the \emph{order} of $M$, denoted by $\oo(M)$.
\end{enumerate}
\end{definition}

Note that if $\sigma=(\alpha,M)\in\Aut(L_n)$ has a finite order,
then so does $M$.
But, as demonstrated in Example \ref{exam_FiniteOrd},
the other way around is not true in general.

\begin{lemma}
  \label{lemma_pb=la}
  Let $\sigma=(\alpha,M)\in \Aut(L_n)$, $a\in L_n$ and $A=L_n(x,y;\sigma,a)$.
  Then the following statements are equivalent:
  \begin{enumerate}[(i)]
    \item
    $M$ is power-bounded.
    \item
    $M$ has a finite order.
    \item
    $\sigma$ is locally algebraic.
  \end{enumerate}
\end{lemma}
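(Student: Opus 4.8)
The plan is to prove the three equivalences (i) $\Leftrightarrow$ (ii) $\Leftrightarrow$ (iii) by the cycle (ii) $\Rightarrow$ (i) $\Rightarrow$ (iii) $\Rightarrow$ (ii), exploiting the matrix description of $\sigma = (M,\alpha)$ given in Lemma~\ref{lemma_composition}. The implication (ii) $\Rightarrow$ (i) is immediate: if $M^m = I$ then $\{\|M^j\| : j \in \NNN\}$ takes only finitely many values, so $M$ is power-bounded.

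Next I would prove (i) $\Rightarrow$ (iii). Assume $M$ is power-bounded by $N$. Take any element $f \in L_n$; it is a finite $k$-linear combination of monomials $z^{v}$ with $v \in \ZZ^n$, say $v$ ranging over a finite set $S$. Using Lemma~\ref{lemma_exponent} and the description \eqref{eqn_sigma(z)} of $\sigma$, one computes $\sigma^m(z^{v}) = \alpha^{(\ldots)} z^{M^m v}$ — more precisely $\sigma^m(z_i) = (\text{scalar}) \cdot z^{(M^m)[-,i]}$, so $\sigma^m(z^v) = (\text{scalar})\cdot z^{M^m v}$. Since $M$ is power-bounded, each coordinate of $M^m v$ is bounded in absolute value by $n N \max_{v\in S}\|v\|_\infty =: R$, uniformly in $m$. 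Hence all $\sigma^m(f)$, $m \in \NNN$, lie in the finite-dimensional span of $\{z^w : \|w\|_\infty \le R\}$. By Lemma~\ref{lemma-LocAlgIff}, $\sigma$ is locally algebraic.

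Then I would close the cycle with (iii) $\Rightarrow$ (ii), which I expect to be the main obstacle. Assume $\sigma$ is locally algebraic. Applying Lemma~\ref{lemma-LocAlgIff} to each $z_i$, the orbit $\{\sigma^m(z_i) : m \in \NNN\}$ lies in a finite-dimensional subspace of $L_n$; since each $\sigma^m(z_i) = (\text{scalar})\cdot z^{(M^m)[-,i]}$ is a single monomial, there are only finitely many distinct monomials among them, hence only finitely many distinct columns $(M^m)[-,i]$ as $m$ ranges over $\NNN$. Doing this for all $i = 1, \dots, n$ shows the set $\{M^m : m \in \NNN\}$ is finite. By the pigeonhole principle there exist $p < q$ with $M^p = M^q$; since $M \in \GL(n,\ZZ)$ is invertible (Lemma~\ref{lemma_composition}(ii)), multiplying by $M^{-p}$ gives $M^{q-p} = I$, so $M$ has finite order. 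The one subtlety to handle carefully is the ``single monomial'' observation: I should make explicit via \eqref{eqn_auto2} and Lemma~\ref{lemma_exponent} that $\sigma^m$ sends each $z_i$ to a scalar multiple of the monomial $z^{(M^m)[-,i]}$, so that linear independence of distinct monomials in $L_n$ forces distinct exponent columns to come from distinct powers $M^m$ — this is where the Laurent (as opposed to a more general) structure and the group law of Lemma~\ref{lemma_composition}(i) are used.

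Note that throughout, the GWA structure and the element $a$ play no role: the statement is really about the automorphism $\sigma \in \Aut(L_n)$, and all three conditions depend only on $M = M_\sigma$. (The presence of $a$ and $A$ in the hypotheses is just for later convenience, e.g.\ to feed into Lemma~\ref{lem_GK-CA}.)
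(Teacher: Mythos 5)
Your proof is correct, and it covers the same ground as the paper's with the same key ingredients (the identity $\sigma^m(z_i)=(\text{scalar})\,z^{(M^m)[-,i]}$ coming from Lemma \ref{lemma_composition}, plus Lemma \ref{lemma-LocAlgIff}), but the cycle is arranged differently and the implication leaving (iii) is argued in a genuinely different way. The paper proves (i)$\Rightarrow$(ii)$\Rightarrow$(iii)$\Rightarrow$(i), handling the last step contrapositively: if $M$ is not power-bounded, one fixes a matrix entry realizing $\|M^m\|$ for infinitely many $m$ and shows the orbit of the corresponding $z_i$ escapes every $V^l$, so $\sigma$ is not locally algebraic. You instead prove (iii)$\Rightarrow$(ii) directly: since each $\sigma^m(z_i)$ is a nonzero scalar times a single Laurent monomial and distinct monomials are linearly independent, local algebraicity forces only finitely many columns $(M^m)[-,i]$, hence finitely many matrices $M^m$, and then pigeonhole together with invertibility of $M$ in $\GL(n,\ZZ)$ gives $M^{q-p}=I$. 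Your route is slightly cleaner at this point (it avoids the paper's ``without loss of generality the maximum entry sits at a fixed position infinitely often'' reduction and yields finite order immediately rather than via power-boundedness), while the paper's contrapositive argument has the minor virtue of exhibiting explicitly which orbit has unbounded degree when $M$ is not power-bounded. Your remaining implications, (ii)$\Rightarrow$(i) and (i)$\Rightarrow$(iii), are essentially the paper's (i)$\Rightarrow$(ii) and (ii)$\Rightarrow$(iii) in mild disguise (the paper deduces finiteness of $\{M^m:m\in\NNN\}$ from boundedness of integer entries and then runs the same uniform-exponent-bound argument you give), and your closing observation that $a$ and the GWA structure are irrelevant to the statement is accurate.
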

\begin{proof}
(i)$\Rightarrow$(ii).
Suppose that $\|M^m\|<N$ for some $N\geq 1$ and all $m\in\NNN$.
Then the entries of matrix $M^m$ must be integers between $-N$ and $N$ for all $m\in \NNN$.
Hence $\{M^m:m\in \NNN\}$ is a finite subgroup of $\GL(n,\ZZ)$.
Thus $M$ is of finite order.

(ii)$\Rightarrow$(iii).
Suppose $M$ has a finite order.
Then $\{M^i:i\geq 0\}$ is a finite set.
Let $N$ be the maximal number that appears as an entry of the matrices $\{M^i:i\geq 0\}$.

\emph{Claim.}  $\sigma^m(z)=\beta_m z^{M^m}$ for $m\geq 1$, where
\[
\beta_m=\alpha^{I+M+M^2+\dots+M^{m-1}}, m\geq 1.
\]

\emph{Proof of the claim.} The claim holds for $m=1$ by Eq. (\ref{eqn_sigma(z)}).
Assume that the claim holds for some $m\geq 1$.
Then, by Eq. (\ref{eqn_sigma(z)}) and Lemma \ref{lemma_exponent},
\begin{align*}
  \sigma^{m+1}(z)
  =\sigma(\beta_m z^{M^m})
%  &=\beta_m\sigma\left( z_1^{M^m[1,-]},z_2^{M^m[2,-]},\dots,z_n^{M^m[n,-]}\right)\\
=\beta_m\sigma( z)^{M^m}
=\beta_m( \alpha z^M)^{M^m}
=\beta_m \alpha^{M^m} z^{M^{m+1}}
=\beta_{m+1} z^{M^{m+1}}.
\end{align*}
Thus the claim holds for all $m\geq 1$.

By the claim, we have that
 \[
 \sigma^m(z_i)\in V:= k\{z_1^{i_1}z_2^{i_2}\dots z_n^{i_n}:-N\leq i_j\leq N, 1\leq j\leq n\},  m\geq 1,
 \]
where $V$ is a \fd\ subspace with $\dim(V)\leq (2N+1)^n$.
Suppose $T=z_1^{a_1}z_2^{a_2}\dots z_n^{a_n}\in L_n$, each $a_i\in \ZZ$.
Then
 $\sigma^m(T)=(\sigma(z_1))^{a_1}(\sigma(z_2))^{a_2}\dots (\sigma(z_n))^{a_n}$,
 which is contained in the \fd\ subspace
  \[
 V_T:=k\{z_1^{i_1}z_2^{i_2}\dots z_n^{i_n}:-N'\leq i_j\leq N', 1\leq j\leq n\}
 \]
 for all $m\geq 1$, where $N'=N(|a_1|+|a_2|+\dots+|a_n|)$.
 As a result,
 for each Laurent polynomial $p\in L_n$, say
 \[
  p=c_1T_1+c_2T_2+\dots+c_sT_s,
  c_i\in k^*, T_i \text{ Laurent monomial}, i=1,2,\dots,n,
  \]
 we have that $\sigma^m(p)$ is contained in the \fd\ space
 $V_{T_1}+V_{T_2}+\dots+V_{T_s}$
 for all $m\geq 1$.
 Therefore, by Lemma \ref{lemma-LocAlgIff},
 $\sigma$ is locally algebraic.

 (iii)$\Rightarrow$(i).
 Suppose that $M$ is not power-bounded,
 i.e., for every $l>1$, there exists $m=m(l)>1$ such that
 $\|M^m\|>l$.
 Without loss of generality,
 assume $\|M^m\|=|M^m[1,1]|$ for infinitely many $m\geq 1$.
 Let $V=kz_1+kz_2+\dots+kz_n+kz_1^{-1}+kz_2^{-1}+\dots+kz_n^{-1}$.
 It follows from the above claim that
 \[
 \sigma^m(z_1)=cz_1^{M^m[1,1]}z_2^{M^m[2,1]}\dots z_n^{M^m[n,1]}\not\in
 V^l.
 \]
Thus the subspace $k\{\sigma^m(z_1):m\geq 1\}$ is not \fd.
Therefore $\sigma$ is not locally algebraic.

\end{proof}

\noindent
\emph{\textbf{Proof of Theorem \ref{thm-mainL}.}}
  It follows from Proposition \ref{prop-Zhang97PL} and Lemma \ref{lemma_pb=la}.
  \hfill\qed

\begin{remark}\upshape
  \label{remark-MaxOrd}
It follows from Lemma \ref{lemma_pb=la} that
the elements of finite order in $\GL(n,\ZZ)$ play an important role.
Given $n\geq 1$, there are only finitely many integers that can be the orders of matrices in $\GL(n,\ZZ)$.
Levitt \cite{Levitt1998} shows that the maximum order $G(n)$ of elements of finite order in $\GL(n,\ZZ)$ satisfies a Landau-type estimate $\ln G(n)\sim \sqrt{n\ln n}$ and
there is a method of dynamical programming to compute $G(n)$.
Following \cite{Levitt1998}, Table \ref{tab-Gn} gives the values of $G(n)$ for $n\leq 20$ where $G(n)$ is omitted for odd $n=2p+1$ since $G(2p+1)=G(2p)$.
\begin{table}[h]
  \centering
  \begin{spacing}{1.15}
  \caption{Maximum orders of matrices in $\GL(n,\ZZ)$}\label{tab-Gn}
  \begin{tabular}{c|ccccccccccc}
  \hline
  $n$&$1$&$2$&$4$&$6$&$8$&$10$
  &$12$&$14$&$16$&$18$&$20$\\
  \hline
  $G(n)$&$2$&$6$&$12$&$30$&$60$&$120$
  &$210$&$420$&$840$&$1260$&$2520$\\
  \hline
\end{tabular}
\end{spacing}
\end{table}
By using \cite[Proposition 1.1]{Levitt1998},
we can determine whether an integer is the order  of an element of $\GL(n,\ZZ)$.
Note that for a given order $o$ there may be many matrices of order $o$ (see Example \ref{exam_FiniteOrd}).
\end{remark}

\begin{example}
\label{exam_FiniteOrd}\upshape
  Suppose that $c\in k^*$ and $\sigma=\sigma_q \in \Aut(L_2)$ is defined as
  \[
  \sigma(z_1)=cz_1^{1-q}z_2^{2-q},
  \sigma(z_2)=z_1^{q}z_2^{q-1},\ q\in\ZZ.
  \]
  Then $\sigma^m\neq \id$ for all $m\geq 1$.
But $M_{\sigma}=\begin{pmatrix}
    1-q&q\\
    2-q&q-1
  \end{pmatrix}$ has a finite order $\oo(M)=2$ for all $q\in\ZZ$
and thus $\GK(L_2(\sigma,a))=\GK(L_2)+1=3$
by Theorem \ref{thm-mainL}.
\end{example}

We conclude this section by an application of Theorem \ref{thm-mainL} to cancellation problem.
\begin{definition}[Definitions 0.1 and 0.8 of \cite{tang2020cancellation}]
Let $A$ be an algebra.
\begin{enumerate}[(i)]
\item
We say $A$ is \emph{cancellative} (respectively, \emph{strongly cancellative})
if the isomorphism
\begin{align*}
 A[x]\cong B[x] \ \ (\text{respectively }
 A[x_{1}, \cdots, x_{m}] \cong B[x_{1}, \cdots, x_{m}])
\end{align*}
implies $A\cong B$
for all algebra $B$ (respectively for all algebra $B$ and all integer $m\geq 1$).
\item
We say $A$ is \emph{$\sigma$-algebraically cancellative} (respectively, \emph{strongly $\sigma$-algebraically cancellative})
if the isomorphism between Ore extensions (respectively iterated Ore extensions)
\begin{align*}
 A[x;\sigma']\cong B[x;\sigma''] \ \ (\text{respectively }
 A[x_{1};\sigma_1']\cdots[x_{m};\sigma_m'] \cong B[x_{1};\sigma_1'']\cdots[x_{m};\sigma_m''])
\end{align*}
implies $A\cong B$
for all algebra $B$ and locally algebraic automorphisms $\sigma'$ and $\sigma''$
(respectively for all algebra $B$,
 locally algebraic automorphisms $\sigma'_i$ and $\sigma''_i$, $1\leq i\leq m$, and all integer $m\geq 1$).
\end{enumerate}
\end{definition}
The following lemma follows directly from the definitions and its proof is omitted.

\begin{lemma}
  \label{lemma-SCancel}
  Let $A$ be an algebra.
  \begin{enumerate}[(i)]
    \item
    If $A$ is strongly cancellative (respectively strongly $\sigma$-algebraically cancellative),
  then $A$ is cancellative (respectively $\sigma$-algebraically cancellative).
    \item
    If $A$ is $\sigma$-algebraically cancellative (respectively strongly $\sigma$-algebraically cancellative),
  then $A$ is cancellative (respectively strongly cancellative).
  \end{enumerate}

\end{lemma}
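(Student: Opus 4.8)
The plan is to derive each implication directly from the definitions; the only substantive observation needed is that an ordinary (iterated) polynomial ring is a special case of an (iterated) Ore extension by locally algebraic automorphisms.

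For part (i), I would specialize to one variable. If $A$ is strongly cancellative and $A[x]\cong B[x]$, then the defining property of strong cancellativity with $m=1$ forces $A\cong B$, so $A$ is cancellative; the identical specialization $m=1$ in the definition of strong $\sigma$-algebraic cancellativity shows that a strongly $\sigma$-algebraically cancellative algebra is $\sigma$-algebraically cancellative.

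For part (ii), the key point is that for any algebra $A$ the identity automorphism $\id_A$ is locally algebraic, since every \fd\ subspace of $A$ is $\id_A$-stable. Consequently $A[x]=A[x;\id_A]$ and, more generally, $A[x_1,\dots,x_m]=A[x_1;\id]\cdots[x_m;\id]$ are (iterated) Ore extensions by locally algebraic automorphisms. Hence any isomorphism $A[x]\cong B[x]$ is an instance of the isomorphism $A[x;\sigma']\cong B[x;\sigma'']$ with $\sigma'=\id_A$ and $\sigma''=\id_B$ appearing in the definition of $\sigma$-algebraic cancellativity, so $\sigma$-algebraic cancellativity yields $A\cong B$ and therefore cancellativity; applying the same reasoning with $m$ variables shows that strong $\sigma$-algebraic cancellativity implies strong cancellativity.

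Since no growth estimate or structural input is required, there is no genuine obstacle here: the argument is pure bookkeeping, and the only thing to be careful about is matching up the automorphisms in the hypotheses (and, in the iterated case, the order in which the indeterminates are adjoined). This is why the statement can safely be recorded without a written-out proof.
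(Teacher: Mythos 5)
Your argument is correct and is exactly what the paper intends: it omits the proof with the remark that the lemma ``follows directly from the definitions,'' and your specialization to $m=1$ for (i) and the observation that $\id$ is locally algebraic (so $A[x_1,\dots,x_m]=A[x_1;\id]\cdots[x_m;\id]$ is an instance of an iterated Ore extension by locally algebraic automorphisms) for (ii) is precisely that direct verification. No gap.
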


Theorem \ref{thm-mainL} together with results from \cite{tang2020cancellation} can be applied to get some cancellation properties of GWAs.

\begin{corollary}
  \label{coro-Cancellation}
  Let $A=L_n(\tau,a)$ be a GWA with $a\neq 0$ such that one of the equivalent conditions in Theorem \ref{thm-mainL} is satisfied.
  Then $A$ is strongly $\sigma$-algebraically cancellative.
  As a consequence,
  $A$ is strongly cancellative and cancellative.
\end{corollary}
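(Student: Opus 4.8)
The plan is to invoke the machinery of \cite{tang2020cancellation} directly, using the fact that a GWA over $L_n$ is, up to a routine change of generators, an iterated Ore extension whose base is a Laurent polynomial algebra. First I would reduce to the twisted Laurent polynomial case: since $a \neq 0$ and any one of the equivalent conditions of Theorem \ref{thm-mainL} holds, $\tau$ is locally algebraic, so $\GK(A) = \GK(L_n) + 1 = n+1$ by Lemma \ref{lem_GK-CA}. Using Lemma \ref{lemma_iso}(iii) together with the structure of $A$ one can realize $A$ as an iterated skew extension over a Laurent polynomial algebra (or over $A$ itself viewed appropriately), and the point is that the relevant properties the cancellation theorems of \cite{tang2020cancellation} require — being a noetherian domain (from Lemma \ref{lemma-Noeth}, since $L_n$ is a noetherian domain and $a\neq 0$), having finite GK-dimension (just shown), and satisfying whatever ``retractability'' or ``Hopfian''-type hypothesis is used there — all hold for $A$.

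The key steps, in order: (1) record that $A$ is a noetherian domain of GK-dimension $n+1 < \infty$; (2) identify the precise theorem in \cite{tang2020cancellation} whose hypotheses are ``noetherian domain of finite GK-dimension'' (or the analogous cohomologically/center-based criterion — e.g. the LND-rigidity or Morita-cancellation results there) and verify those hypotheses for $A$; (3) conclude that $A$ is strongly $\sigma$-algebraically cancellative; (4) apply Lemma \ref{lemma-SCancel}(ii) to deduce that $A$ is strongly cancellative, and then Lemma \ref{lemma-SCancel}(i) to deduce that $A$ is cancellative. Steps (1) and (4) are immediate from results already in the excerpt; step (3) is a citation once the hypotheses are checked.

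The main obstacle is step (2): making sure the hypotheses of the cancellation theorem from \cite{tang2020cancellation} are genuinely met by $A = L_n(\tau,a)$. This is where one must be careful — the cancellation results in that paper typically require more than finite GK-dimension (for instance, that the algebra has no ``nontrivial'' locally nilpotent derivations, or that its Makar-Limanov or Nagata-type invariant is all of $A$, or that the center behaves well). For a GWA over $L_n$ with $a \neq 0$ these conditions do hold, but verifying them may require knowing the center of $A$ (which is controlled by the fixed ring of $\langle M_\sigma\rangle$ acting on $L_n$, finite since $M_\sigma$ has finite order) and ruling out bad derivations; I expect this to be the only non-formal part of the argument. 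Everything else is bookkeeping with Lemmas \ref{lemma-Noeth}, \ref{lem_GK-CA}, and \ref{lemma-SCancel}.
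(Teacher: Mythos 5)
Your overall strategy matches the paper's: establish that $A$ is a finitely generated domain (Lemma \ref{lemma-Noeth}), that $\GK(A)=n+1<\infty$ (Theorem \ref{thm-mainL}), cite a cancellation theorem from \cite{tang2020cancellation}, and finish with Lemma \ref{lemma-SCancel}. But the step you yourself flag as the ``main obstacle'' — identifying the precise theorem and verifying its hypotheses — is exactly where your proposal stops short, and your guess at what those hypotheses are is off target. The result the paper invokes is \cite[Theorem 0.9]{tang2020cancellation}, whose hypotheses for strong $\sigma$-algebraic cancellativity are: $A$ is a finitely generated domain of finite GK-dimension admitting a \emph{controlling element} in the sense of \cite[Definition 4.2]{tang2020cancellation}. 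The paper verifies the last hypothesis by checking that the identity element $1$ is a controlling element, i.e.\ that the divisor subalgebra satisfies $\mathbb{D}(1)=A$. This is not an LND-rigidity, Makar-Limanov-invariant, or center-based criterion, so your proposed route of ``computing the center of $A$ via the fixed ring of $\langle M_\sigma\rangle$ and ruling out bad derivations'' is not the verification that is needed and would leave the citation unjustified as written. Until the controlling-element condition (or whichever hypothesis of the cited theorem you settle on) is actually checked for $L_n(\tau,a)$, step (3) of your outline is not yet a legitimate citation.

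A secondary issue: your reduction ``using Lemma \ref{lemma_iso}(iii) to realize $A$ as a twisted Laurent polynomial algebra / iterated skew extension'' requires $a$ to be a \emph{unit} of $L_n$, whereas the corollary only assumes $a\neq 0$. This reduction is both unavailable in general and unnecessary — the paper works directly with $A=L_n(\tau,a)$ without it. Steps (1) and (4) of your outline are fine and coincide with the paper's bookkeeping.
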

\begin{proof}
  Since $L_n$ is a \fg\ domain and $a\neq 0$,
  it follows from Lemma \ref{lemma-Noeth} that $A$ is a \fg\ domain.
  It is easy to check that the identity element $1$ is a \emph{controlling element} (see \cite[Definition 4.2]{tang2020cancellation}) of $A$, i.e., ${\mathbb D}(1)=A$.
  By the assumption and Theorem \ref{thm-mainL},
  $\GK(A)=n+1<\infty$ and thus
  $A$ is strongly $\sigma$-algebraically cancellative by \cite[Theorem 0.9]{tang2020cancellation}.
\end{proof}
%%%%%%%%

\

\textbf{Acknowledgements.}
The author would like to thank the anonymous referees for their
careful reading, valuable suggestions and useful references.
Part of this work was done during a visit to James J. Zhang at the University of Washington.
The author would like to express deep gratitude to James J. Zhang
for many stimulating conversations on the subject,
and to the department of Mathematics at the University of Washington
for the hospitality during his visit.
The author wishes to thank Yu Li, Wenchao Zhang and Xiaotong Sun for valuable discussions.
This work was supported in part by Huizhou University (hzu202001, 2021JB022) and the Guangdong Provincial Department of Education (2020KTSCX145, 2021ZDJS080).
%    Insert the bibliography data here.

\

\section*{Appendix: proof of Lemma \ref{lemma-PnSMC}}
\label{Sec_appendix}
\begin{proof}
For the reader's convenience, we reproduce a proof by copying that of \cite[Theorem 3.2]{zhang1997note} with small modification.
  We use induction on $n$.
  When $n=0$, $P_0=k$, it is easy to check that $k$ satisfies $\SM(k,1,0)$.
  Suppose $P_n$ satisfies $\SM(V_n,c_n,n)$.
  We need to prove that $P_{n+1}$ satisfies $\SM(V,c,n+1)$
  for $V=V_{n+1}$ and $c=c_{n+1}>0$.
  Denote $z=z_{n+1}$.
  Suppose $W\subseteq P_{n+1}$ is \fd\ and
  $Va\subseteq W$ for some $0\neq a\in P_{n+1}$.
  It suffices to show that
  \begin{align}
    \label{eqn-W^m}
    \dim(W^m)\geq c\dim(W) m^{n+1}, m\in\NNN.
  \end{align}
  Case 1. Suppose $W$ has the form
    \[
    W=W_1z^{p_1}+W_2z^{p_2}+\cdots+W_lz^{p_l},\ \ l\geq 2, p_1<p_2<\dots<p_l
    \]
    such that
    $V_na_i\subseteq W_i\subseteq P_n$ for $0\neq a_i\in P_n$.
    In this case, we prove by induction on $l$ that
    $W$ satisfies the following inequality and thus satisfies Eq. (\ref{eqn-W^m}):
  \begin{align}
    \label{eqn-W^mCase1}
    \dim(W^m)\geq c'\dim(W) m^{n+1}, \ c'=\frac{c_n}{2(n+1)}, m\in\NNN.
  \end{align}
    If $l=2$, i.e., $W=W_1z^{p_1}+W_2z^{p_2}$ with $p_1<p_2$,
    then $\dim(W)=\dim(W_1)+\dim(W_2)$ and
    \begin{align*}
      \dim(W^m)\geq \dim\left(\sum_{i=0}^mW_1^iW_2^{m-i}z^{ip_1+(m-i)p_2}\right)
      =\dim\left(\bigoplus_{i=0}^mW_1^iW_2^{m-i}z^{ip_1+(m-i)p_2}\right)
    \end{align*}
    where the ``='' holds since $ip_1+(m-i)p_2$ is a strictly decreasing function in variable $i$.
    Then
    \begin{align*}
      \dim(W^m)
      &\geq \sum_{i=0}^m\dim \left( W_1^iW_2^{m-i}z^{ip_1+(m-i)p_2}\right)= \sum_{i=0}^m\dim \left(W_1^iW_2^{m-i}\right)\\
      &=\frac12 \left(\sum_{i=0}^m\dim\left( W_1^iW_2^{m-i}\right)+
      \sum_{i=0}^m\dim\left( W_1^iW_2^{m-i}\right)\right)\\
      &\geq\frac12 \left(\sum_{i=0}^m\dim \left(W_1^i\right)+
      \sum_{i=0}^m\dim \left(W_2^{m-i}\right)\right)\\
      &\geq\frac12 \left(\sum_{i=0}^mc_n\dim \left(W_1\right)i^n+
      \sum_{i=0}^m c_n\dim \left(W_2\right)(m-i)^n\right)\ \  \text{(use SMC)}\\
      &=\frac12 c_n(\dim \left(W_1\right)+\dim \left(W_2\right))\left(\sum_{i=0}^m i^n\right)\\
      &\geq c'\dim \left(W\right)m^{n+1}
    \end{align*}
    where the last inequality follows from the inequality
    $\sum_{i=0}^m i^n>\frac{1}{n+1}m^{n+1}$ for $n\geq 0$.

    Now suppose that $W=W_1z^{p_1}+W_2z^{p_2}+\cdots+W_{l+1}z^{p_{l+1}}$ for some $l\geq 2$,
    and that
    \begin{align}
      \label{eqn-IH}
    \dim(U_l^m)\geq c'\dim(U_l) m^{n+1}, m\in \NNN
    \end{align}
    where
    $U_l=W_1z^{p_1}+W_2z^{p_2}+\cdots+W_lz^{p_l}$.
    Then
    \begin{align}
      \nonumber
      \dim(W^m)
      &\geq\dim\left(\sum_{i=0}^mU_l^{m-i}\left( W_{l+1}z^{p_{l+1}}\right)^i\right)\\
      \label{eqn-DirectSum}
      &\geq\dim\left(
      U_l^m+\sum_{i=1}^m(W_lz^{p_l})^{m-i}\left( W_{l+1}z^{p_{l+1}}\right)^i
      \right).
    \end{align}
    Note that $P_{n+1}$ has a natural $\NNN$-filtration
  \begin{align}
    \label{eqn-FiltrationLn}
  F_{0}\subseteq F_{1}\cdots\subseteq F_{m-1}\subseteq F_{m}\subseteq F_{m+1}\subseteq\cdots,
  \ m\in\NNN
  \end{align}
  where $F_m=\bigcup_{i\leq m}P_nz^i$.
  We have
  that $U_l^m\subseteq F_{mp_l}$,
    $\sum_{i=1}^m(W_lz^{p_l})^{m-i}\left( W_{l+1}z^{p_{l+1}}\right)^i\subseteq
    F_{mp_{l+1}}-F_{(m-1)p_l+p_{l+1}-1}$,
    and $mp_l\leq (m-1)p_l+p_{l+1}-1$.
    Thus the ``$+$'' of subspaces in Eq. (\ref{eqn-DirectSum}) is direct.
    Hence
    \begin{align*}
      \nonumber
      \dim(W^m)
      &\geq\dim\left(
      U_l^m\right)
      +\dim\left(\sum_{i=1}^m W_l^{m-i}W_{l+1}^iz^{(m-i)p_l+ip_{l+1}}
      \right)\\
      &\geq c'\dim(U_l) m^{n+1}
      +\sum_{i=1}^m \dim\left( W_{l+1}^i
      \right)\ \  \text{(use hypothesis Eq. (\ref{eqn-IH}))}\\
    &\geq c'\dim(U_l) m^{n+1}
      +\sum_{i=1}^m c_n\dim\left( W_{l+1}
      \right) i^n\ \  \text{(use SMC for $P_n$)}\\
    &\geq c'\dim(U_l) m^{n+1}
      +\frac{c_n}{n+1}\dim\left( W_{l+1}
      \right) m^{n+1}\\
      &\geq c'\dim(U_l) m^{n+1}
      +c'\dim\left( W_{l+1}
      \right) m^{n+1}\\
    &= c'\dim(W) m^{n+1}, \ \ m\in\NNN.
    \end{align*}
    Thus Eq. (\ref{eqn-W^mCase1}) holds as desired.

    Case 2.
    Return to the general form of $W$.
    By Lemma \ref{lemma-LeadTermMap} (using the natural filtration described as in Eq. (\ref{eqn-FiltrationLn})),
    $\gr(W)\supseteq \gr(V)\gr(a)=(V_n+kz)\gr(a)
    \supseteq V_n\alpha z^p+k\alpha z^{p+1}$
    where we assume $\gr(a)=\alpha z^p$ for some $\alpha\in P_n$ and $p\in\ZZ$.
    Let $W'=\gr(W)V_n\alpha z^p $.
    Then $W'$ has the form in Case 1 and $W'\subseteq (\gr(W))^2$.
    By Case 1 and Lemma \ref{lemma-LeadTermMap}, we have
    \begin{align*}
      \dim(W'^m)\geq c'\dim(W') m^{n+1}\geq c'\dim(\gr(W)) m^{n+1}
      =c'\dim(W) m^{n+1}.
    \end{align*}
    Thus
    \begin{align*}
     \dim(W^m)&\geq \dim((\gr(W))^m)\ \ \ \  \text{(use Lemma \ref{lemma-LeadTermMap})}\\
     &\geq \dim\left((\gr(W))^{2\cdot[\frac m2]}\right)
     \geq \dim\left((W')^{[\frac m2]}\right)\\
     &\geq c'\dim(W) {\left[\frac m2\right]}^{n+1}.
    \end{align*}
    where $\left[\frac m2\right]$ stands for the greatest integer that is not greater than $\frac m2$.
    It is easy to check that
    $
    {\left[\frac m2\right]}^{n+1}\geq \left(\frac m5\right)^{n+1}
    $
    for $m\geq 2$.
    Thus
    \[
    \dim(W^m)\geq c'\dim(W) \left(\frac m5\right)^{n+1}
    =\frac{c'}{5^{n+1}}\dim(W) m^{n+1}
    =c_{n+1}\dim(W) m^{n+1}, m\geq 2.
    \]
    It is easy to check that
    $\dim(W^m)\geq c_{n+1}\dim(W) m^{n+1}$ for $m=0,1$.
    That is, Eq. (\ref{eqn-W^m}) holds for all cases.
    Thus we have finished the proof.
\end{proof}


\begin{thebibliography}{99}

\bibitem{Almulhem2018}
Almulhem M, Brzezi{\'{n}}ski T. {Skew derivations on
  generalized {W}eyl algebras}. J Algebra, 2018, {493}: 194--235.

\bibitem{Andruskiewitsch2020}
Andruskiewitsch N, Angiono I,  Heckenberger I.  {On nichols algebras of infinite rank with finite
  gelfand{\textendash}kirillov dimension}.
  Rend Lincei Mat Appl, 2020, {31}: 81--101.

\bibitem{Bai2021}
Bai Y X, Chen Y Q, Zhang Z R.
{{G}elfand-{K}irillov dimension of
  bicommutative algebras}.
Linear Multilinear Algebra, to appear.

\bibitem{Bao2020}
Bao Y H, Ye Y, Zhang  J J.  {Truncation of unitary operads}.
  Adv Math, 2020, {372}, 107290.

\bibitem{bass1984non}
Bass H. {A non-triangular action of $\mathbb{G}_a$ on $\mathbb{A}^3$}.
J. Pure Appl Algebra, 1984, {33}: 1--5.


\bibitem{Bavula1992Generalized}
Bavula V V.
{Generalized {W}eyl algebras and their representations}.
Algebra i Analiz, 1992, 4: 75--97.
Translation in St
  Petersburg Math J, 1993, 4: 71--92.

\bibitem{Bavula1996}
Bavula  V V.  {Filter dimension of algebras and modules, a simplicity
  criterion of generalized {W}eyl algebras}.
Comm Algebra, (1996),  {24}: 1971--1992.

\bibitem{bavula1996global}
Bavula V V.  {Global dimension of generalized {W}eyl algebras}. Representation
  theory of algebras. Providence: Amer Math
  Soc, 1996, 18: 81--107.

\bibitem{bavula2001isomorphism}
Bavula V V, Jordan D. {Isomorphism problems and groups of automorphisms
  for generalized {W}eyl algebras}.
Trans Amer Math Soc, 2001, {353}: 769--794.


\bibitem{bavula2001krull}
Bavula V V, Lenagan T H. {{K}rull dimension of generalized {W}eyl
  algebras with noncommutative coefficients}.
  J. Algebra, 2001, {235}: 315--358.

\bibitem{bavula2017quantum}
Bavula V V, Lu T.  {The quantum {E}uclidean algebra and its prime
  spectrum}.
Israel J Math, 2017, {219}: 929--958.

\bibitem{Bavula1998Krull}
Bavula V V, {van Oystaeyen} F.
{{K}rull dimension of generalized
  {W}eyl algebras and iterated skew polynomial rings: Commutative
  coefficients}.
J. Algebra, 1998,  {208}: 1--34.


\bibitem{Bell2017Zariski}
Bell J P, Zhang J J.  {{Z}ariski cancellation problem for
  noncommutative algebras}.
Selecta Math (N.S.), 2017, {23}:  1709--1737.

\bibitem{Bell2020}
Bell J P, Zelmanov E.
{On the growth of algebras, semigroups, and
  hereditary languages}.
Invent Math, 2021, {224}: 683--697.

\bibitem{BG09}
Bruns W, Gubeladze J.
{Polytopes, rings, and {K}-theory}.
New York: Springer, 2009.

\bibitem{Brzezinski2016}
Brzezi{\'{n}}ski T.  {Noncommutative differential geometry of
  generalized {W}eyl algebras}.
SIGMA Symmetry Integrability Geom Methods
  Appl, 2016, {12}:  1--18.

\bibitem{CPWZ15}
Ceken S, Palmieri J H, Wang Y H, Zhang J J.
{The discriminant controls automorphism groups of noncommutative algebras}.
Adv Math, 2015, 269: 551-584.

\bibitem{chan2022reflexive}
Chan K, Gaddis J, Won R, Zhang J J.
{Reflexive hull  discriminants and applications}.
 Selecta Math (N.S.), 2022, {28}: 1--35.

\bibitem{Ferraro2020}
Ferraro L, Gaddis J, Won R.  {Simple $\mathbb{Z}$-graded
  domains of {G}elfand-{K}irillov dimension two}.
   J Algebra, 2020, {562}: 433--465.

\bibitem{furter2009quasi}
Furter J-P.  {Quasi-locally finite polynomial endomorphisms}.
  Math Z, 2009, {263}:  473--479.

\bibitem{Gaddis2019}
Gaddis J, Won R.  {Fixed rings of generalized {W}eyl algebras}.
  J Algebra, 2019, {536}:  149--169.

\bibitem{gelfand1966corps}
Gelfand I M, Kirillov A A. {Sur les corps li\'es aux alg\`ebres
  enveloppantes des alg\`ebres de {L}ie}.
   Inst Hautes \'Etudes Sci Publ
  Math, 1966:  5--19.

\bibitem{GOODEARL2017}
Goodearl K R, Zhang J J.  {Non-affine {H}opf algebra domains of
  {G}elfand-{K}irillov dimension two}.
Glasgow Math J, 2017, {59}:  563--593.

\bibitem{Grigorchuk2008}
Grigorchuk R, Pak I.  {Groups of intermediate growth : an
  introduction}. L'Enseignement Math\'ematique, Revue Internationale, IIe
  S\'erie, 2008, {54}:   251--272.

\bibitem{Gutierrez2020}
Guti{\'{e}}rrez J, Valqui C.  {Bivariant k-theory of
  generalized {W}eyl algebras}. J Noncommut Geom,  2020, {14}:
  639--666.

\bibitem{huh1996gelfand}
Huh C, Kim C O. {{G}elfand-{K}irillov dimension of skew polynomial
  rings of automorphism type}. Comm Algebra, 1996, {24}:
  2317--2323.

\bibitem{jordan2009reversible}
Jordan D A, Sasom N.  {Reversible skew {L}aurent polynomial
  rings and deformations of {P}oisson automorphisms}.
J Algebra Appl, 2009,
   {8}:  733--757.

\bibitem{Jung1942Uber}
Jung H W E. {\"{U}ber ganze birationale transformationen {der
  Ebene}}. J Reine Angew Math,  {1942}, 184:   161--174.

\bibitem{Khoroshkin2015}
Khoroshkin A, Piontkovski D.  {On generating series of finitely
  presented operads}. J Algebra, 2015, {426}:   377--429.

\bibitem{klyuev2021twisted}
Klyuev D.  {Twisted traces and positive forms on generalized $q$-{W}eyl
  algebras}. SIGMA, 2022, 18.

\bibitem{Krause-Lenagan_2000}
Krause G, Lenagan T. {Growth of algebras and {G}elfand-{K}irillov
  dimension}. Providence: Amer Math Soc, 2000.

\bibitem{Lane1975Fixed}
Lane D R. {Fixed points of affine {C}remona transformations of the plane
  over an algebraically closed field}.
 Amer J Math, 1975, {97}:  707--732.

\bibitem{Leroy1988On}
Leroy A, Matczuk J,  Okninski J.  {On the {G}elfand-{K}irillov
  dimension of normal localizations and twisted polynomial rings}. Perspectives
  in Ring Theory (Oystaeyen F V, Bruyn L L eds), Dordrecht: Kluwer Academic
  Publishers, 1988: 205--214.

\bibitem{Levitt1998}
Levitt G, Nicolas J-L.  {On the maximum order of torsion
  elements in {GL$(n,\mathbf{Z})$} and {Aut$(F_n)$}}.
J Algebra,  1998, {208}: 630--642.

\bibitem{Lezama2018}
Lezama O, Wang Y H, Zhang J J. {{Z}ariski cancellation problem
  for non-domain noncommutative algebras}. Math Z, 2018, {292}: 1269--1290.

\bibitem{Liu2018}
Liu L Y. {On homological smoothness of generalized {W}eyl algebras over
  polynomial algebras in two variables}. J Algebra,  2018, {498}:
  228--253.

\bibitem{liu2021batalin}
Liu L Y, Ma W.  {{B}atalin-{V}ilkovisky algebra structures on
  {H}ochschild cohomology of generalized {W}eyl algebras}. Front Math China,
  2021: 1--27.

\bibitem{lorenz1982gelfand}
Lorenz M. {{G}elfand-{K}irillov dimension of skew polynomial rings}.
J.   Algebra, 1982, {77}: 186--188.

\bibitem{matczuk1988gelfand}
Matczuk J. {The {G}elfand-{K}irillov dimension of
  {P}oincar\'e-{B}irkhoff-{W}itt extensions}. Perspectives in Ring Theory
  (Oystaeyen F V, Bruyn L L eds), Dordrecht: Kluwer Academic Publishers,
1988: 221--226.

\bibitem{McConnell-Robson2001}
McConnell J C, Robson J C, Small  L W. {Noncommutative
  {N}oetherian rings}.
 Providence: Amer  Math Soc, 2001.

\bibitem{milner1968curvature}
Milnor J. {A note on curvature and fundamental group}. J Differential
  Geom,  1968, {2}: 1--7.

\bibitem{nagata1972automorphism}
Nagata M. {On automorphism group of $k[x,y]$}. Tokyo: Kinokuniya,
  1972.

\bibitem{2020Growth}
Qi Z H, Xu  Y J, Zhang J J, Zhao X G.  {Growth of
  nonsymmetric operads}, Indiana Univ Math J (to appear), arXiv:2010.12172.

\bibitem{rogalski2009gk}
Rogalski, D.
{{GK}-dimension of birationally commutative surfaces}.
Trans Amer Math Soc, 2009, 361: 5921-5945.

\bibitem{Shestakov2003}
Shestakov I, Umirbaev U.  {{P}oisson brackets and two-generated
  subalgebras of rings of polynomials}.
   J Amer Math Soc, 2003, {17}: 181--196.

\bibitem{Smith1976}
Smith M K. {Universal enveloping algebras with subexponential but
  not polynomially bounded growth}. Proc Amer Math Soc, 1976, {60}: 22--22.

\bibitem{Suarez-Alvarez_2015}
Su{\'{a}}rez-Alvarez M, Vivas Q.  {Automorphisms and
  isomorphisms of quantum generalized {W}eyl algebras}.
   J Algebra, 2015, {424}: 540--552.

\bibitem{tang2020cancellation}
Tang X, Zhang J J, Zhao X G.  {Cancellation of {M}orita and
  skew types}. Israel J Math,  2021, {244}: 467--500.

\bibitem{vanderKulk1953}
van der Kulk W.  {On polynomial rings in two variables}.
Nieuw Arch Wisk, 1953, {1}: 33--41.

\bibitem{Wang2015}
Wang D G, Zhang J J, Zhuang G. {Connected {H}opf algebras of
  {G}elfand-{K}irillov dimension four}.
 Trans Amer Math Soc, 2015, {367}: 5597--5632.

\bibitem{Won2018}
Won R.  {The noncommutative schemes of generalized {W}eyl algebras}.
J   Algebra, 2018, {506}: 322--349.

\bibitem{Wu1991}
Wu Q S.  {{G}elfand-{K}irillov dimension under base field extension}.
Israel J Math, 1991, {73}: 289--296.

\bibitem{zhang1997note}
Zhang J J. {A note on {GK} dimension of skew polynomial extensions}.
  Proc Amer Math Soc, 1997,  {125}: 363--374.

\bibitem{ZhangZhao2013algebras}
Zhang Y, Zhao X G.  {{G}elfand-{K}irillov dimension of
  differential difference algebras}.
   LMS J Comput Math, 2014, {17}: 485--495.

\bibitem{Zhao2018Gelfand}
Zhao X G, Mo Q H, Zhang Y.  {{G}elfand-{K}irillov dimension
  of generalized {W}eyl algebras}.
Comm Algebra, 2018, {46}:  4403--4413.

\end{thebibliography}
\end{document}